\newtheorem{teo}{Theorem}[section]
\newtheorem{lemma}[teo]{Lemma}
\newtheorem{prop}[teo]{Proposition}
\newtheorem{cor}[teo]{Corollary}
\theoremstyle{definition}
\newtheorem{defn}[teo]{Definition}
\newtheorem{example}[teo]{Example}
\newtheorem{assum}{Assumption}
\theoremstyle{remark}
\newtheorem{rem}[teo]{Remark}
\newtheoremstyle{named}{}{}{\itshape}{}{\bfseries}{}{ }{\thmname{#1}  \thmnote{\normalfont~(#3)}\bfseries.}
\theoremstyle{named}
\newtheorem*{namedconjecture}{L-space knot conjecture}
\newtheoremstyle{named1}{}{}{\itshape}{}{\bfseries}{.}{ }{\thmname{#1}}
\theoremstyle{named1}
\newtheorem*{namedconjecture1}{L-space conjecture}
\newtheoremstyle{namedd}{}{}{\itshape}{}{\bfseries}{.}{ }{#1 \thmnote{#3}}
\theoremstyle{namedd}
\newtheorem*{namedtheorem}{Theorem}
\newtheorem*{namedcorollary}{Corollary}
\newtheoremstyle{named}{}{}{}{}{\bfseries}{.}{ }{\thmname{#1}}
\theoremstyle{named}
\newtheorem*{namedassumption}{Standing assumption}
\newcommand{\matR} {\ensuremath {\mathbb{R}}}
\newcommand{\matQ} {\ensuremath {\mathbb{Q}}}
\newcommand{\matZ} {\ensuremath {\mathbb{Z}}}
\newcommand{\matH} {\ensuremath {\mathbb{H}}}
\newcommand{\matD} {\ensuremath {\mathbb{D}}}
\newcommand{\Z}{\ensuremath {\mathbb{Z}}}
\newcommand{\Q}{\ensuremath {\mathbb{Q}}}
\newcommand{\ie}{\emph{i.e.}\@ifnextchar.{\!\@gobble}{}}
\newcommand{\eg}{\emph{e.g.}\@ifnextchar.{\!\@gobble}{}}
\newcommand{\etc}{etc\@ifnextchar.{}{.\@}}
\author{Diego Santoro}
\address{Faculty of Mathematics, University of Vienna}
\email{diego.santoro95@gmail.com}
\title{Taut foliations from knot diagrams}
\begin{document}

\begin{abstract} 
We prove that knots in $S^3$ satisfying certain diagrammatic properties are persistently foliar. As a consequence, all their non-trivial surgeries support coorientable taut foliations.
All non-fibered two-bridge knots, as well as many pretzel and Montesinos knots, satisfy these diagrammatic properties. More generally, our result applies to arborescent knots defined by weighted planar trees with more than one vertex, provided that all weights have absolute value greater than one and at least one weight has absolute value greater than two.
We also use this result to provide sufficient conditions for closures of 3–braids, and more generally odd–strand braids, to be persistently foliar.

The ideas involved in the proof extend to links: as an application, we show that for all surgeries $M$ on the Borromean rings one has that $M$ is not an $L$-space if and only if it supports a coorientable taut foliation.
\end{abstract}

\maketitle

\section{Introduction}\label{sec: intro}
A codimension-$1$ foliation of a closed $3$-manifold $M$ is said to be \emph{taut} if every leaf meets a closed transversal, that is, a smooth simple closed curve in $M$ everywhere transverse to the foliation.
Throughout this paper we work with $C^{\infty,0}$ foliations: roughly speaking, these are foliations $\mathcal{F}$ whose leaves are smoothly immersed, with the tangent planes varying continuously so as to define a subbundle $T\mathcal{F}\subset TM$.
We further restrict to \emph{coorientable} foliations, meaning those for which the line bundle $TM/T\mathcal{F}$ is orientable. When $M$ itself is orientable, this is equivalent to requiring that $T\mathcal{F}$ is an orientable plane bundle.

Taut foliations have long been a classical and fruitful tool in the study of $3$-dimensional manifolds \cite{Th, Gabai, Gabai2, Gabai3},  and they endow manifolds that admit them with interesting topological properties. For instance, if a closed orientable $3$-manifold $M\ne S^2\times S^1$ supports a coorientable taut foliation, then $M$ has infinite fundamental group \cite{Novikov}, is irreducible \cite{Rosenberg}, and its universal cover is diffeomorphic to $\mathbb{R}^3$ \cite{Palmeira}. The existence of coorientable taut foliations is of significant current interest also on account of the following conjecture.



\begin{namedconjecture1}\label{L space conjecture} 
For an irreducible oriented rational homology $3$-sphere $M$, the following are
equivalent:
\begin{enumerate}
    \item  $M$ is left-orderable, i.e, $\pi_1(M)$ is left-orderable.
    \item  $M$ is not an $L$-space.
    \item  $M$ supports a cooriented taut foliation.
\end{enumerate}
\end{namedconjecture1} 

Boyer, Gordon, and Watson postulated the equivalence of $(1)$ and $(2)$ in \cite{BGW}. The question of whether $(2)$ and $(3)$ are equivalent arose informally after $(3)\Rightarrow(2)$ was established \cite{OS, Bow, KR2}, and was first stated explicitly in \cite{J}.
This conjecture posits deep connections among geometric, dynamical, Floer homological, and algebraic properties of $3$-manifolds, and substantial evidence has accumulated in its favor. For instance, it has been verified for all graph manifolds, \ie, those whose JSJ decomposition consists only of Seifert fibered pieces \cite{BC,BGW,BNR,CLW,EHN,HRRW,LS}. Furthermore, Li has recently established that $(1)$ implies $(3)$ for manifolds of Heegaard genus two \cite{L1}.
\newline

Every $3$-manifold arises via Dehn surgery on a link in $S^3$ \cite{Lickorishsurgery, Wallace}, and in this paper we investigate the conjecture through such surgery descriptions. This is a standard approach; see, for example, \cite{Roberts3, roberts2, LR, KR1,  krishna1, DRdiamond, DRpersistently, S, Stwobridge, Zhao, krishna2}.


Concerning foliations, the most natural and common way to construct a taut foliation on the $r$–surgery on a knot $K$ is to first build a coorientable taut foliation on the knot exterior (\ie, the complement of an open tubular neighbourhood of $K$) that meets the boundary torus transversely in parallel curves of slope $r$. The leaves of this foliation can then be capped off with the meridional discs of the attached solid torus, yielding a taut foliation of the $r$–surgery on $K$. This motivates the following definition.

\begin{defn}[\cite{DRdiamond}]
A knot $K$ is \emph{persistently foliar} if for each non-meridional slope (not necessarily rational) on $K$ there exists a coorientable taut foliation in the exterior of $K$ intersecting the boundary of the knot exterior transversely in parallel curves of that slope.
\end{defn}

In particular, every non-trivial (\ie, non-meridional) surgery on a persistently foliar knot supports a coorientable taut foliation.  Recall that a knot $K\subset S^3$ is an \emph{L-space knot} if some non-trivial surgery on $K$ produces an $L$-space\footnote{In much of the literature, the term $L$-space knot refers to knots with a \emph{positive} $L$-space surgery; here we follow the convention of  \cite{DRpersistently}.}.
Motivated by the $L$-space conjecture, Delman and Roberts \cite{DRpersistently} proposed the following:

\begin{namedconjecture}[\cite{DRpersistently}]
A knot is persistently foliar if and only if it is not an $L$-space knot and has no reducible surgeries.
\end{namedconjecture}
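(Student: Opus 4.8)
The plan is to treat the two implications of the biconditional separately, since only one of them is within reach in full generality.

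\emph{Persistently foliar $\Rightarrow$ not an $L$-space knot and no reducible surgery.} This is essentially a formal consequence of results recalled above, and I would establish it first. Let $K$ be persistently foliar and let $r$ be a non-meridional slope; a coorientable taut foliation of the exterior $E(K)$ transverse to $\partial E(K)$ along parallel curves of slope $r$ caps off, using the meridian discs of the filling solid torus, to a coorientable taut foliation of $S^3_r(K)$. Hence no non-meridional surgery on $K$ is an $L$-space, because $L$-spaces admit no coorientable taut foliation \cite{OS, Bow, KR2}; in particular neither $K$ nor its mirror is an $L$-space knot. Likewise no non-meridional surgery $S^3_r(K)$ is reducible: a reducible closed orientable $3$-manifold carrying a coorientable taut foliation must equal $S^2\times S^1$ \cite{Novikov, Rosenberg, Palmeira}, and $S^2\times S^1$ arises by surgery only on the unknot \cite{Gabai}, which is an $L$-space knot and hence not persistently foliar by the previous sentence. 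This proves the implication.

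\emph{Not an $L$-space knot and no reducible surgery $\Rightarrow$ persistently foliar.} This is the substantive direction, and because a proof valid for every knot would establish a large portion of the surgery form of the $L$-space conjecture, no such proof is currently available; the realistic plan is to isolate the combinatorial input that makes the construction succeed and then to run it over ever wider families of knots. Following Gabai, Roberts and Delman--Roberts, the construction proceeds as follows: for each non-meridional slope $\gamma$ one builds an essential branched surface $B_\gamma\subset E(K)$ that fully carries a coorientable taut foliation and meets $\partial E(K)$ transversely in curves of slope $\gamma$, so that the carried foliation witnesses persistent foliarity at $\gamma$ directly. One starts from a taut sutured manifold hierarchy of $E(K)$ (equivalently, from a minimal-genus Seifert surface together with its complementary sutured structure), extracts a branched surface carrying a taut foliation, and then reworks it near $\partial E(K)$ --- spinning leaves, grafting in product pieces, inserting boundary-parallel annuli --- so that the set of slopes realized along the boundary grows from a single interval to all non-meridional slopes. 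The two hypotheses are precisely the two known obstructions promoted to assumptions: ``not an $L$-space knot'' forbids the realizable-slope set from being forced to omit an $L$-space surgery interval such as $[2g(K)-1,\infty]$ (or, after mirroring, the symmetric negative interval), and ``no reducible surgery'' forbids a reducing sphere in a surgered manifold from blocking a slope; the conjecture is the assertion that nothing else can obstruct.

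I expect the main obstacle to be the realization of the large positive slopes, and symmetrically the large negative ones, since these are exactly the slopes at which an $L$-space knot (respectively its mirror) carries $L$-space surgeries and where, accordingly, the construction must genuinely use the hypothesis that $K$ is not an $L$-space knot --- and translating that Heegaard Floer condition into combinatorial data usable in a foliation construction is the heart of the difficulty; the boundary slopes of essential surfaces in $E(K)$ are a related source of trouble. For this reason the conjecture is approached family by family: for the knots admitting the weighted-planar-tree diagrams studied in this paper the branched surfaces $B_\gamma$ can be assembled directly and combinatorially from the diagram, which gives persistent foliarity --- and hence, by the implication proved above, the $L$-space knot conjecture --- for those knots, and, together with prior work on other families of knots, this steadily enlarges the class of knots for which the conjecture is known.
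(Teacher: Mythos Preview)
The statement you were asked to prove is labelled in the paper as a \emph{conjecture} (the $L$-space knot conjecture of Delman--Roberts), and the paper does not claim or supply a proof of it; it is stated only as motivation for the main results. So there is no ``paper's own proof'' against which to compare your attempt, and any complete proof of the reverse implication would be a major advance well beyond the scope of the paper.

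Your treatment is appropriate to this situation. The forward implication you prove is correct and is indeed the known, essentially formal, half: capping off gives a coorientable taut foliation on every non-meridional surgery, so no surgery is an $L$-space; and a reducible surgery with such a foliation would have to be $S^2\times S^1$, which by Gabai's Property~R forces $K$ to be the unknot, itself an $L$-space knot. One small caveat: under the paper's definition an $L$-space knot is one with a \emph{positive} $L$-space surgery, so strictly speaking your argument shows that neither $K$ nor its mirror is an $L$-space knot; the conjecture is usually read with this symmetric meaning, but it is worth being explicit.

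For the reverse implication you correctly identify that no general proof is available and outline the branched-surface strategy. This is exactly the approach the paper pursues, but only for the specific diagrammatic families of Theorem~\ref{thm: main theorem}: rather than a Seifert-surface hierarchy, the paper builds its branched surface directly from the diagram via the fully augmented link, the twice-punctured crossing discs, and a boundary-parallel torus with two meridional cusps, then invokes Li's laminar criterion and the Delman--Roberts cusp theorem. So your high-level description of the method is accurate, but the paper's actual construction is more concrete and diagram-driven than the generic sutured-hierarchy picture you sketch.
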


Many constructions of taut foliations in the literature focus on fibered knots and links, and rely essentially on this property. In contrast, non-fibered knots in $S^3$ are never $L$–space knots \cite{G, Ni}, and the $L$–space conjecture predicts that, aside from reducible surgeries, every non-trivial surgery on such knots admits a taut foliation.

The main theorem of this paper provides sufficient \emph{diagrammatic} conditions for a knot $K$ to be persistently foliar, and thus applies to both fibered and non-fibered knots. More precisely, given a knot diagram, we describe two easily checkable conditions on a pair of graphs derived from the diagram that guarantee $K$ is persistently foliar. We note that the use of diagrammatic properties to construct foliations appears already in \cite{Roberts3}, where Roberts proves that a large class of alternating knots are persistently foliar.

Before stating the theorem, we recall some definitions.

\begin{defn}
A \emph{bigon} in a knot diagram is a complementary region of the knot projection in $S^2$ having two vertices\footnote{corresponding to two crossings of the diagram.} in its boundary. A \emph{twist region} is either a maximal connected chain of bigons arranged in a row — that is, one that is not contained in a longer such chain — or a single crossing adjacent to no bigons.

\end{defn}
\begin{assum}\label{assum 1} We will always suppose that the diagram is alternating in a twist region. Otherwise, the diagram can be replaced by another one with fewer crossings by using Reidemeister $2$ moves.
\end{assum}

Throughout the paper, when drawing diagrams, we will indicate the crossings in a twist region with an integer whose absolute value is the number of crossings, and whose sign indicates the type of crossings: positive for right-handed, negative for left-handed. 

We associate two graphs with weighted edges, $\mathcal{G}_g$ and $\mathcal{G}_r$, to a knot diagram $D$ as follows:

\begin{itemize}
\item \textbf{Step 1.} We replace each twist region in $D$ with a weighted blue arc, where the weight records the number of crossings in the region, and we call $\Gamma$ the resulting graph in $S^2$ (see \Cref{fig: Gamma' intro} -- left).


\item \textbf{Step 2.} We collapse each blue arc of $\Gamma$ to a point, obtaining a graph  $\Gamma' \subset S^2$ whose vertices all have valence $4$, and whose complementary regions correspond naturally to those of $\Gamma$. We take any checkerboard colouring of the complementary regions of $\Gamma'$, and transfer this colouring to $\Gamma$ via the correspondence (see \Cref{fig: Gamma' intro} -- right). For convenience, we colour the regions green and red.
\begin{figure}[h]
    \centering
    \includegraphics[width=0.55\textwidth]{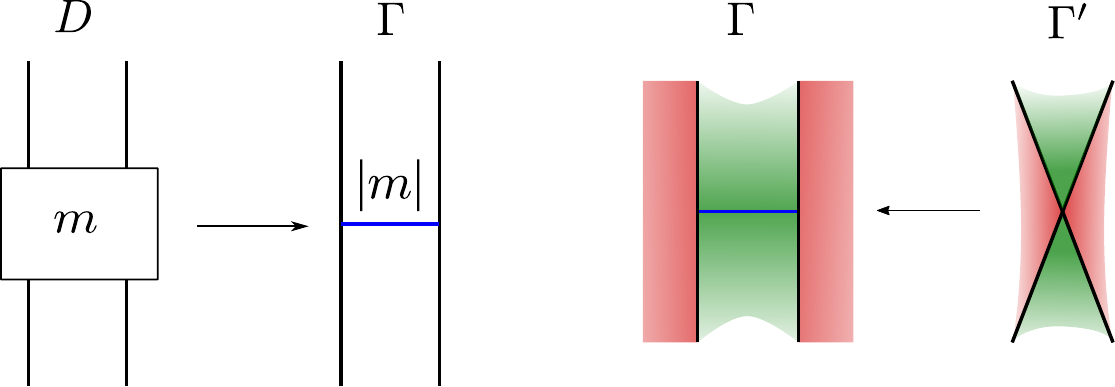}
    \caption{Left: Construction of $\Gamma$ from $D$. Right: Colouring the complementary regions of $\Gamma$ via a checkerboard colouring of the complementary regions of $\Gamma'$.}
    \label{fig: Gamma' intro}
\end{figure}

\item \textbf{Step 3.} The vertices of the graph $\mathcal{G}_g$ correspond to the green-coloured complementary regions of $\Gamma$. Then, each blue arc in $\Gamma$ intersecting some green regions \emph{only} at its endpoints determines a weighted edge between the corresponding vertices, with weight equal to that of the blue arc. The graph $\mathcal{G}_r$ is defined analogously, using the red-coloured regions and the blue arcs that intersect them only at their endpoints.
\end{itemize}

\Cref{fig: K_n} illustrates the construction of $\mathcal{G}_g$ and $\mathcal{G}_r$ in a concrete example. The main result of the paper is the following. We recall that all diagrams in this paper satisfy \Cref{assum 1}.

\begin{namedtheorem}[\ref{thm: main theorem}]
Let $K$ be a knot in $S^3$ with a diagram $D$, and suppose that $D$ has more than one twist region. Assume that:
\begin{itemize}
\item All weights of the graphs $\mathcal{G}_r$ and $\mathcal{G}_g$ are greater than one, and at least one weight is greater than two.
\item The graphs $\mathcal{G}_r$ and $\mathcal{G}_g$ are connected.
\end{itemize}
Then $K$ is persistently foliar. In particular, every non-trivial surgery on $K$ admits a coorientable taut foliation.
\end{namedtheorem}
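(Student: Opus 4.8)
The plan is to realise every non--trivial slope on $K$ by a coorientable taut $C^{\infty,0}$ foliation carried by one of two branched surfaces built directly from the diagram: a ``green'' branched surface $B_g$ and a ``red'' one $B_r$, associated with the two checkerboard spanning surfaces of $D$. For concreteness take green. The green checkerboard spanning surface $S_g\subset S^3$ has a handle decomposition whose combinatorics is exactly $\mathcal{G}_g$: its $0$--handles are the green regions (the vertices) and its $1$--handles are the bands coming from the twist regions (the edges, a band carrying $|n|$ half--twists for an edge of weight $n$). Inside each band I replace it by the standard ``spiralling'' local model --- a small branched piece across which a carried lamination may make arbitrarily many additional meridional turns, in one prescribed direction. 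Choosing these directions coherently (this is where Assumption \ref{assum 1} is used, and where one must also take care to make the resulting object \emph{transversely oriented}, notwithstanding a possible non--orientability of $S_g$) yields a branched surface $B_g$ properly embedded in the knot exterior $E(K)=S^3\setminus\interior{N(K)}$; moreover one arranges its complementary regions in $E(K)$ to be product (indeed product sutured) pieces. The same construction on the red regions gives $B_r$.

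Next I would verify that $B_g$, and symmetrically $B_r$, is laminar in the sense of Gabai--Oertel and Li: (i) no monogon in $\partial_h N(B_g)$ and no disk or half--disk of contact --- here one uses that every weight has absolute value greater than one, so that every band carries at least two half--twists and the spiralling model is non--degenerate; (ii) no sphere or torus component in $\partial_h N(B_g)$ and no carried sphere, from irreducibility of $E(K)$; (iii) no branch sector of $B_g$ is a sink disk. Part (iii) is the crux of the argument: it is a purely combinatorial statement about the branch locus of $B_g$, and I would reduce it to a statement about $\mathcal{G}_g$, using connectedness of $\mathcal{G}_g$ to propagate an ``outward'' cusp direction across the whole branched surface once it appears at a single band, and using the existence of a weight of absolute value greater than two --- a band with a spare half--twist --- to install such an outward direction in the first place. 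Granting (i)--(iii), $B_g$ fully carries an essential lamination; and since its complementary regions were built to be tautly foliable, one upgrades this to a coorientable taut $C^{\infty,0}$ foliation of $E(K)$ that remains transverse to $\partial N(K)$.

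It then remains to identify the realised slopes and check they exhaust the non--trivial ones. The intersection $\tau_g=B_g\cap\partial N(K)$ is a train track on the boundary torus which, thanks to the spiralling models, carries precisely the slopes in one of the two half--lines of $\Q$ cut out by the boundary slope of $S_g$ (an explicit function of the weights of $\mathcal{G}_g$), and likewise $\tau_r$ carries a half--line cut out by the boundary slope of $S_r$. Computing these two endpoints from the weight data, the hypotheses --- all weights of absolute value greater than one, some weight of absolute value greater than two, both graphs connected, and $D$ not isotopic in $S^2$ to any $D_k$ of Figure \ref{fig: D_k new} --- are exactly what guarantees that the two half--lines overlap, so that their union is all of $\Q$, i.e.\ every non--meridional slope; the $D_k$ are precisely the configurations for which a gap opens up, as it must since $T(2,k)$ has $L$--space (lens space) surgeries and so cannot be persistently foliar. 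Combining the two families of foliations, every non--trivial slope on $K$ is realised by a coorientable taut foliation of $E(K)$ meeting $\partial N(K)$ transversely in parallel curves of that slope, so $K$ is persistently foliar.

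The main obstacle I anticipate is the sink--disk verification (iii), carried out uniformly over \emph{all} diagrams satisfying the hypotheses: the branch locus of $B_g$ can be intricate, and the whole argument has to be organised through the auxiliary graphs $\mathcal{G}_g,\mathcal{G}_r$, showing that connectedness together with one weight of absolute value greater than two forces every cusp of $B_g$ to point outward. Two secondary difficulties are (a) pinning down the endpoints of the two half--lines sharply enough to see that they always overlap --- equivalently, that the construction degenerates \emph{only} for the excluded family $D_k$ --- and (b) controlling the complementary regions of $B_g$ and $B_r$ carefully enough, from the start, that the passage from essential lamination to taut foliation can be made while preserving transversality to $\partial N(K)$ at the prescribed slope.
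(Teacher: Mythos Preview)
Your plan is genuinely different from the paper's route, and it has a real gap. The paper does not build branched surfaces from the checkerboard spanning surfaces of $D$ at all. Instead it passes to the \emph{fully augmented link} $L=K_0\cup K_1\cup\cdots\cup K_n$ obtained by replacing each twist region by a crossing circle, so that the exterior of $K$ is a Dehn filling of the exterior $M$ of $L$. In $M$ it assembles a single cooriented branched surface $B$ from (i) a torus $T$ parallel to $\partial_0M$, (ii) the twice-punctured discs $D_i$ bounded by the crossing circles, and (iii) all but one of the regions that the projection sphere is cut into. The key device is that $B$ is smoothed so as to have \emph{two meridional cusps} along $T$, and its complement is a product sutured ball together with a collar $\partial_0M\times[0,1]$. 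After checking $B$ is laminar and applying Li's theorem on $\partial_1M,\dots,\partial_nM$, the Delman--Roberts cusp theorem (Theorem~\ref{cusps implies persistently foliar}) delivers \emph{every} non-meridional slope on $\partial_0M$ at once; there is no ``two half-lines that must overlap'' step. The hypothesis that some weight exceeds two is used only locally, to park one of the two cusps on a disc $D_1$ while still realising the needed slope $\tfrac{1}{k_1}$ on $\partial_1M$ (Lemma~\ref{lemma: local analysis}); connectedness of $\mathcal{G}_g$ and $\mathcal{G}_r$ is used to show the ball complementary region is actually a product sutured ball (Lemma~\ref{lemma: exterior of B}, Proposition~\ref{prop: from B to B'}).

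Your scheme follows the template of Roberts' alternating-knot construction, and that is where it breaks. The diagrams covered by the theorem are in general \emph{not} alternating --- twist regions may carry either sign --- so the checkerboard surfaces $S_g,S_r$ need not be $\pi_1$-injective, the complementary regions of your $B_g,B_r$ need not be products, and the ``propagate an outward cusp direction using connectedness of $\mathcal{G}_g$'' idea does not survive a twist region whose sign is opposite to its neighbours; this is exactly the sink-disc obstacle you flag, and it is not merely technical. Equally seriously, the claim that the half-lines determined by the boundary slopes of $S_g$ and $S_r$ overlap is unproved and generally false in the non-alternating setting: those boundary slopes no longer satisfy the simple relation that makes the overlap automatic for alternating $D$, and nothing in the hypotheses forces it. The paper avoids both problems by working in the augmented-link exterior, where the pieces of the projection sphere and the discs $D_i$ are manifestly discs, the combinatorics is governed directly by $\mathcal{G}_g,\mathcal{G}_r$, and the two cusps on $T$ replace the half-line bookkeeping entirely.
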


\begin{figure}[h]
    \centering
    \includegraphics[width=0.8\textwidth]{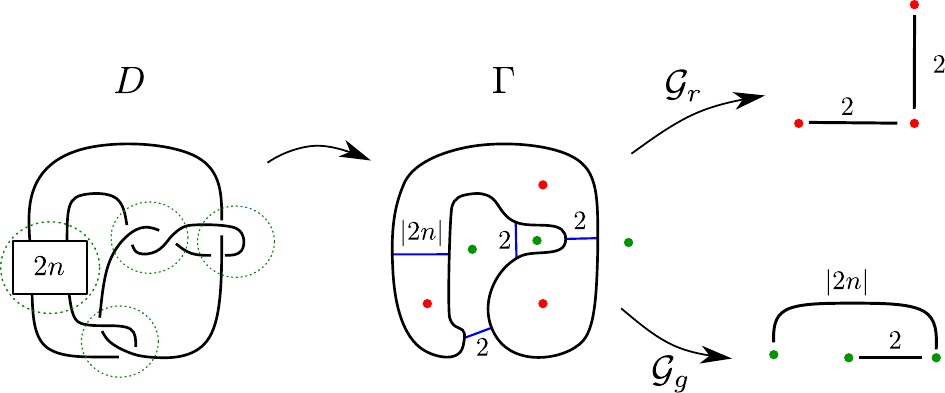}
    \caption{Construction of the graphs $\mathcal{G}_g$ and $\mathcal{G}_r$ in a concrete example. Red and green dots indicate the colours of the complementary regions of $\Gamma$. When $|n|>1$ the knot shown is persistently foliar. When $n=-1$ it is an $L$-space knot.}
    \label{fig: K_n}
\end{figure}

Observe that the first condition in the statement of \Cref{thm: main theorem} is equivalent to requiring that every twist region contains more than one crossing, with at least one twist region containing more than two. We stress that the weights are always positive, as they record the number of crossings rather than their signs.
Moreover, by \Cref{lemma: grafi connessi sse contrattili} we have that in order to verify the second condition in \Cref{thm: main theorem}, it suffices to check that either of the two graphs is a tree.

\begin{rem}
The assumption that the knot diagram $D$ has more than one twist region is not restrictive, since a knot admitting such a diagram must be a $(2,k)$-torus knot. Torus knots always have $L$-space surgeries \cite{Moser}, and therefore cannot be persistently foliar. Moreover, to apply \Cref{thm: main theorem} it is unnecessary to verify that $K$ is not a torus knot; one only needs to check that the diagram $D$ has more than one twist region, which is immediate.    
\end{rem}

\begin{rem}
\Cref{thm: main theorem} can be rephrased in terms of two other well-known graphs associated to a knot diagram, the \emph{Tait graphs}. This reformulation is presented in \Cref{sec: Tait}.
\end{rem}

As the following two examples show, both conditions required in the statement of \Cref{thm: main theorem} are necessary. 

\begin{example}
For a given integer $n\ne 0$, consider the knot $K_n$ defined by the diagram in \Cref{fig: K_n}. The graphs $\mathcal{G}_r$ and $\mathcal{G}_g$ associated to this diagram are both connected, and all twist regions (encircled with green dashed circles) have more than one crossing. When $|n|>1$, there is at least one twist region with more than two crossings, and, by \Cref{thm: main theorem}, $K_n$ is persistently foliar. When $|n|=1$, the first condition required in \Cref{thm: main theorem} is not satisfied; in fact, one can verify that $K_{-1}$ is the torus knot $T(2,5)$, and hence is not persistently foliar.

\end{example}

\begin{example}
In the diagram $D$ of the pretzel knot $P(-2,3,7)$ depicted in \Cref{fig: Fintushel-Stern}, all twist regions have more than one crossing, and there is at least one twist region with more than two crossings. However, one of the graphs associated to $D$ is not connected. It was shown by Fintushel and Stern \cite{Fint-Ster} that $P(-2,3,7)$ has positive lens space surgeries; hence is an $L$-space knot and cannot be persistently foliar.
\end{example}

\begin{figure}[h]
    \centering
    \includegraphics[width=0.8\textwidth]{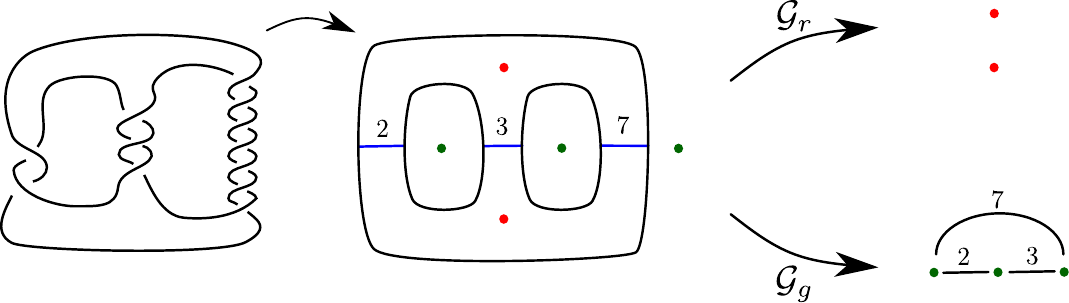}
    \caption{The graph $\mathcal{G}_r$ associated to the diagram in figure is not connected. The pretzel knot $P(-2,3,7)$ is an $L$-space knot and hence cannot be persistently foliar.}
    \label{fig: Fintushel-Stern}
\end{figure}

\subsection{Foliations on surgeries on the Borromean rings.}
The statement of \Cref{thm: main theorem} concerns surgeries on knots. Nevertheless, the ideas used in its proof can also be applied to construct foliations on surgeries on links with multiple components. For example, we will be able to prove the following:

\begin{namedtheorem} [\ref{thm: Borromean}]
    Let $\mathcal{B}$ be the Borromean rings and let $M$ be a Dehn surgery on $\mathcal{B}$. Then $M$ supports a coorientable taut foliation if and only if $M$ is not an $L$-space. More precisely, if $r_1, r_2, r_3$ are rational numbers, then:
    \begin{itemize}
    \item The $(r_1,r_2,r_3)$-surgery on $\mathcal{B}$ is an $L$-space if and only if $(r_1,r_2,r_3)\in [1, \infty)^3 \cup (-\infty, -1]^3$.
    \item The $(r_1,r_2,r_3)$-surgery on $\mathcal{B}$ supports a coorientable taut foliation otherwise.
    \end{itemize}
\end{namedtheorem}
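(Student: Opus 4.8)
The statement combines two assertions, and I would prove them in a way that makes the logic circular‑free: (A) every surgery with all $r_i\ge 1$, or all $r_i\le -1$, is an $L$-space; (B) every other surgery carries a coorientable taut foliation; and (C), which is \cite{OS,Bow,KR2}, that an $L$-space carries no coorientable taut foliation. From these the theorem follows formally: (A) gives the "only if" of the foliation claim via (C); and the $L$-space characterisation is exactly (A) together with its converse, which is free, since a surgery outside $[1,\infty)^3\cup(-\infty,-1]^3$ carries a taut foliation by (B) and hence is not an $L$-space by (C).

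For (A) I would argue by a surgery‑exact‑triangle induction, exploiting that $\mathcal{B}$ is an $L$-space link. Fix $r_2,r_3\ge 1$ and run induction on the complexity of the Farey path from a very large coefficient down to $r_1=1$ on the first component: the $\infty$- and $0$-surgeries on that component reduce to surgeries on proper sublinks of $\mathcal{B}$, and every two–component sublink of $\mathcal{B}$ is the unlink, whose surgeries are connected sums of lens spaces and hence $L$-spaces as long as no coefficient is $0$; the base case is large surgery on the first component inside the $L$-space $L(r_2)\,\#\,L(r_3)$. (When a coefficient equals $\pm 1$ the once–surgered link is the Whitehead link, which fits the same inductive scheme.) The all‑negative region follows by mirroring $\mathcal{B}$. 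This part I regard as essentially routine given the $L$-space link formalism and \cite{OSrational}, so I would keep it brief.

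The substance is (B), and here I would adapt the branched–surface construction used for Theorem \ref{thm: main theorem} to the standard symmetric alternating diagram of $\mathcal{B}$, in the spirit of \cite{Roberts3, krishna1, DRpersistently}. Concretely: build a finite family of branched surfaces properly embedded in the Borromean exterior $\mathcal{E}$ out of this diagram together with the local modifications used in the knot case; verify that each member has no sink disk and is laminar, so that it carries essential laminations whose complementary regions are products and therefore extend to coorientable taut foliations of $\mathcal{E}$; and on each of the three cusps read off the boundary train track $B\cap T_i$ and determine which slopes it carries. Using the $S_3$–symmetry of $\mathcal{B}$ permuting the cusps and the mirror symmetry $(r_1,r_2,r_3)\mapsto(-r_1,-r_2,-r_3)$, it suffices to realise (a) every triple having some coordinate in $(-1,1)$, with the remaining two coordinates arbitrary, and (b) every "mixed–sign" triple, i.e. one with a coordinate in $[1,\infty)$ and a coordinate in $(-\infty,-1]$. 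For each such triple one produces, from the family, a branched surface whose three boundary train tracks simultaneously carry the three prescribed slopes, yielding a coorientable taut foliation of $\mathcal{E}$ meeting each $T_i$ transversely in parallel curves of slope $r_i$; capping off with the meridian discs of the three filling solid tori then gives the desired foliation on $M$. (As a sanity check, $(0,0,0)$ reproduces $T^3$, whose product foliation is taut.)

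The main obstacle is precisely this last step of (B). A single branched surface couples the slopes on the three cusps, so one cannot realise them independently; the work is to show that the finitely many branched surfaces really do cover the \emph{entire} complement of $[1,\infty)^3\cup(-\infty,-1]^3$ — in particular the mixed‑sign triples of case (b) and the extremal triples in which a coordinate equals exactly $\pm 1$ — while simultaneously checking that each branched surface in play remains sink‑disk–free and laminar. The symmetry group of $\mathcal{B}$ is what keeps this combinatorial bookkeeping finite and manageable; everything else is a transcription of the knot‑diagram argument.
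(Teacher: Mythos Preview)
Your outline is in the right spirit, and the logic of combining (A), (B), (C) is exactly how the paper proceeds. The $L$-space direction is even quicker than you suggest: the paper simply cites \cite[Proposition~9.8]{OSrational} (or \cite[Lemma~2.6]{S}), using that $\mathcal{B}$ is Brunnian and that $(1,1,1)$-surgery is the Poincar\'e sphere; amphichirality then gives the negative cube. Your triangle induction would also work but is more than is needed.

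Where your proposal diverges from the paper, and where the gap lies, is in (B). You describe reading off a boundary train track $B\cap T_i$ on \emph{each} of the three boundary tori and then trying to tile the complement of $[1,\infty)^3\cup(-\infty,-1]^3$ by products of slope-intervals coming from a finite family of such surfaces. You correctly flag this tiling as the ``main obstacle'', and you do not resolve it. The paper sidesteps it entirely by treating the three tori \emph{asymmetrically}: the branched surface $B$ is disjoint from one boundary component $\partial_0 M$ and instead contains a torus $T$ parallel to $\partial_0 M$ with two meridional cusps, exactly as in the knot construction of Section~\ref{sec: main theorem}. Theorem~\ref{cusps implies persistently foliar} (Delman--Roberts) then produces, for that component, foliations meeting $\partial_0 M$ in circles of \emph{every} non-meridional slope. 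Only $\partial_1 M$ and $\partial_2 M$ carry genuine boundary train tracks, and a single smoothing (configuration~$c)$ of Lemma~\ref{lemma: local analysis} near both discs) already realises $(0,\infty)\times(-\infty,1)$ there. Thus one branched surface yields foliations on all of $(-\infty,\infty)\times(0,\infty)\times\big((-\infty,1)\setminus\{0\}\big)$; the $S_3$- and mirror-symmetries of $\mathcal{B}$ then cover every triple with at most one zero coordinate outside the $L$-space cubes, and a separate easy argument (irreducibility from laminarity plus $b_1>0$ and \cite{Gabai}) handles the case of two or more zero coordinates.

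In short, the cusped-torus mechanism is what makes the coupling problem you worried about vanish: you get $(-\infty,\infty)$ for free on one coordinate, so there is no need for a family of branched surfaces and no case analysis into your (a) and (b). Without it your plan is not obviously wrong, but it is also not obviously complete, and that is the piece you are missing.
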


Note that when at least one surgery coefficient is $\infty$, the only rational homology spheres obtained by surgery on $\mathcal{B}$ are lens spaces or connected sums of lens spaces. A pictorial illustration of the statement of \Cref{thm: Borromean} is shown on the right side of \Cref{fig: B}.

\begin{figure}[h]
    \centering
    \includegraphics[width=0.7\textwidth]{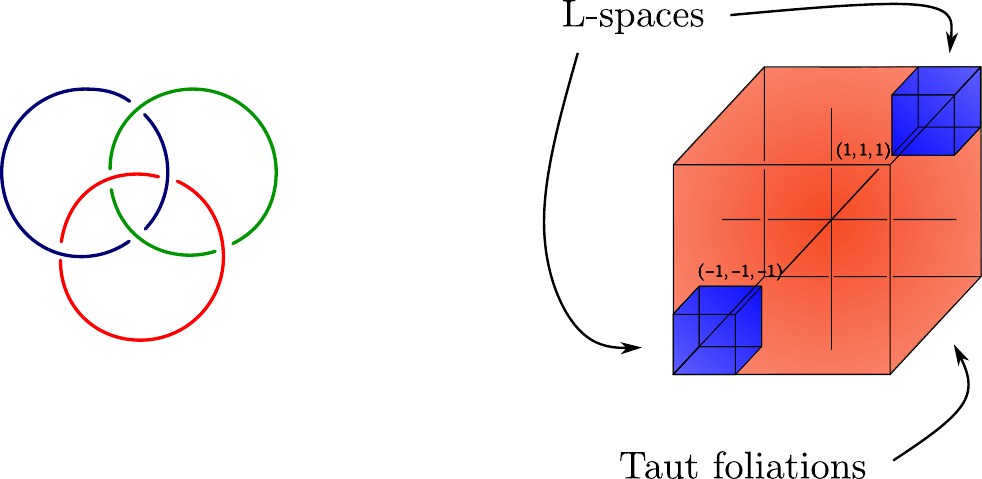}
    \caption{The Borromean rings, together with the set of surgeries on it that admit a coorientable taut foliation (in red) and those that yield $L$-spaces (in blue).}
    \label{fig: B}
\end{figure}

To the best of the author's knowledge, \Cref{thm: Borromean} provides the first instance of the equivalence between conditions $(2)$ and $(3)$ of the $L$-space conjecture for all surgeries on a hyperbolic link with more than two components. We remark that the Borromean rings is a fibered link, but we will not make use of this property.

The proof of \Cref{thm: Borromean} is given in \Cref{sec: Borromean rings and Bing doubles}, where we also present further applications to \emph{Bing doubles} of knots and links (see \Cref{thm: Bing doubles}).

\subsection{Applications to arborescent knots and some braid closures.}
\Cref{sec: applications} is devoted to some applications of \Cref{thm: main theorem} to arborescent knots and braid closures.
Given a finite tree $T$ embedded in the plane, whose vertices are endowed with an integer weight and the choice of an angular sector, one can construct a link in $S^3$ by plumbing twisted Hopf bands according to the pattern given by the tree. Links arising in this way are called \emph{arborescent links}, they were defined by Conway \cite{Conway70}\footnote{Conway called them algebraic links, but the term algebraic link usually refers to link of plane curves singularities.}, and include many well-known families of links, such as two-bridge links, pretzel links, and Montesinos links.

It was proven in \cite{Wu} that non-torus arborescent knots have no reducible surgeries, and in \cite{LM, BM, LMZ} that the only arborescent knots admitting non-trivial $L$-space surgeries are, up to mirroring, the pretzel knots $P(-2, 3, q)$ and the torus knots $T(2, q)$, with odd $q\geq 1$.  Therefore, the $L$-space conjecture predicts that all non-trivial surgeries on other arborescent knots admit coorientable taut foliations.

\Cref{thm: main theorem} can be applied to show that this prediction holds for many arborescent knots.
\begin{namedtheorem}[\ref{thm: arborescent}]
Let $K$ be an arborescent knot associated to a weighted planar tree $T$ with more than one vertex. If all vertex weights have absolute value greater than one, and at least one weight has absolute value greater than two, then $K$ is persistently foliar.
\end{namedtheorem}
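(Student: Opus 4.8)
The plan is to deduce Theorem~\ref{thm: arborescent} from Theorem~\ref{thm: main theorem} by producing, for a suitable arborescent knot $K$ associated to a weighted planar tree $T$, a diagram $D$ whose associated graphs $\mathcal{G}_r$ and $\mathcal{G}_g$ satisfy the two hypotheses of the main theorem (connectedness, and the weight conditions), and which is not isotopic in $S^2$ to one of the $D_k$ or their mirrors. First I would recall the standard procedure for drawing a diagram of an arborescent knot from its weighted planar tree: each vertex with weight $n$ contributes a twist region with $|n|$ crossings (alternating inside the region, in accordance with Assumption~\ref{assum 1}), and edges of $T$ encode how these twist regions are plumbed together. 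The sign of $n$ controls the handedness of the twisting but not the number of crossings, so the hypothesis that every weight has $|n|\ge 2$, with at least one $|n|\ge 3$, translates precisely — via the second Remark following Theorem~\ref{thm: main theorem} — into the statement that every twist region of $D$ has more than one crossing, with at least one having more than two. This disposes of the first bullet in Theorem~\ref{thm: main theorem}.

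The heart of the argument is the identification of $\Gamma$ (respectively $\mathcal{G}_g$, $\mathcal{G}_r$) for an arborescent diagram with combinatorial data read off from $T$. Collapsing each twist region to a weighted red arc turns $D$ into a graph $\Gamma$ in $S^2$ that is essentially a planar embedding of $T$: the red arcs are in bijection with the vertices of $T$, and the way they are joined reflects the plumbing pattern. Here the key point is that the two checkerboard-coloured families of complementary regions of $\Gamma$ correspond to the two ``sides'' along a plumbing tree, so that each of $\mathcal{G}_g$ and $\mathcal{G}_r$ is — after the reductions allowed by Assumptions~\ref{assum 1} and~\ref{assum 2} — a contraction of a planar spanning structure of $T$, hence connected because $T$ is connected. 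I would make this precise by induction on the number of vertices of $T$: a single vertex gives a $(2,n)$-torus knot (excluded, or rather handled by the hypothesis $|T|>1$ implicit in ``at least one vertex has weight with $|n|>2$'' together with more than one vertex); adding a leaf to $T$ corresponds to plumbing one more twisted Hopf band, which glues one new red arc into $\Gamma$ and correspondingly adds one vertex and one edge to one of the graphs $\mathcal{G}_g,\mathcal{G}_r$ while leaving the other essentially unchanged, preserving connectedness of both. By the first Remark after Theorem~\ref{thm: main theorem} (connectedness of $\mathcal{G}_r$, of $\mathcal{G}_g$, and the tree property are all equivalent), it suffices to track one of the two graphs through this induction.

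It remains to check the diagram $D$ is not (isotopic in $S^2$ to) some $D_k$ or its mirror. Since $D_k$ has a single twist region, the only arborescent knots whose natural diagram could coincide with a $D_k$ are those coming from a one-vertex tree; but the hypothesis forces $T$ to have a vertex of weight $|n|>2$ \emph{and} — since a one-vertex tree would make $K$ a torus knot $T(2,n)$ with $|n|>2$, which the theorem's own exclusion of $L$-space-knot arborescent families must be compatible with — I would simply invoke the standing hypothesis that $T$ has more than one vertex (stated in the abstract's formulation of this result) to rule this out, after verifying that no multi-twist-region arborescent diagram satisfying Assumption~\ref{assum 2} is isotopic in $S^2$ to a one-twist-region diagram. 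I expect the main obstacle to be exactly this bookkeeping: carefully setting up the correspondence between the plumbing tree $T$ and the pair $(\mathcal{G}_g,\mathcal{G}_r)$ so that the inductive step genuinely preserves connectedness of \emph{both} graphs, and ensuring the normalisations of Assumptions~\ref{assum 1} and~\ref{assum 2} do not destroy the weight hypotheses (they cannot decrease any twist region below two crossings, since the moves in Figure~\ref{fig: twist} only merge bigon cycles, but this needs to be said). Once the dictionary between $T$ and the graphs is in place, Theorem~\ref{thm: main theorem} applies verbatim and yields that $K$ is persistently foliar.
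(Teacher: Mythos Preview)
Your overall strategy is exactly the paper's: take the standard arborescent diagram $D$ associated to $T$, observe that its twist regions are in bijection with the vertices of $T$ so that the weight condition in Theorem~\ref{thm: main theorem} is immediate from the hypothesis on the weights of $T$, rule out the $D_k$ case using $|T|>1$, and prove connectedness of $\mathcal{G}_g$ and $\mathcal{G}_r$ by induction on the number of vertices of $T$. The paper packages the connectedness argument as a separate lemma (Lemma~\ref{lemma: arborescent graph}) and then applies Theorem~\ref{thm: main theorem} exactly as you do.

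The only substantive difference is how the induction is organised. You propose to \emph{add a leaf} and claim this ``adds one vertex and one edge to one of the graphs $\mathcal{G}_g,\mathcal{G}_r$ while leaving the other essentially unchanged''. The paper instead \emph{removes the root}: if $T$ with root $r$ breaks into rooted subtrees $T_1,\dots,T_n$ with diagrams $D_i$, then $\mathcal{G}_g(D)$ is obtained by taking the graphs $\mathcal{G}^i_r(D_i)$ (note the colour swap), identifying all their ``infinite-region'' vertices $v^i_\infty$ to a single vertex $v$, and attaching one pendant edge $vw$; this is manifestly contractible by induction. The root-removal description is cleaner because it handles vertices of arbitrary valence in one step and makes the colour alternation between consecutive levels of the tree explicit. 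Your leaf-addition step can be made rigorous, but as written it is imprecise: inserting the new twist region changes the black curves of $\Gamma$ as well as adding a red arc, and the colour of the split region alternates with depth in $T$, so ``leaving the other essentially unchanged'' needs justification. Your concern about Assumption~\ref{assum 2} is reasonable but turns out to be vacuous here: for an arborescent diagram with all weights of absolute value at least two, no two crossing circles cobound an annulus, so the normalisation moves are never needed.
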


We conclude the paper by describing some applications to knots via their presentations as braid closures. Let $B_n$ denote the braid group on $n$ strands, with Artin generators $\sigma_1, \dots, \sigma_{n-1}$.

\begin{namedtheorem}[\ref{thm: braids}]Let $n\geq 3$ be an odd positive integer, and let $K=\hat{\beta}$, where 
\[
\beta = \sigma_{i_1}^{a_1} \sigma_{i_2}^{a_2} \cdots \sigma_{i_k}^{a_k} \in B_n
\] 
is a reduced word, each exponent satisfies $|a_j| \ge 2$, and for at least one index $j_0$, we have $|a_{j_0}| > 2$. Suppose further that the following conditions hold: 
\begin{itemize}
\item For every odd index $h\geq 1$, between any two consecutive occurrences of $\sigma_h$, there is at least one occurrence of $\sigma_{h+1}$. 
\item For every even index $h\geq 2$, between any two consecutive occurrences of $\sigma_h$, there is at least one occurrence of $\sigma_{h-1}$. 
\end{itemize}
Then $K$ is persistently foliar.
\end{namedtheorem}

The terms \emph{reduced word} and \emph{occurrence} are defined in \Cref{sec: applications}. As a concise corollary of \Cref{thm: braids}, we highlight the following.

\begin{namedcorollary}[\ref{cor: 3-braids}]
Let $K$ be the closure of a braid 
\[
\beta = \sigma_1^{a_1}\sigma_2^{a_2}\sigma_1^{a_3}\cdots \sigma_2^{a_k} \in B_3,
\] 
with $|a_i|\ge 2$ for all $i$, and $|a_{i_0}|>2$ for some index $i_0$. Then $K$ is persistently foliar.
\end{namedcorollary}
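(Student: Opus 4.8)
The plan is to deduce Corollary~\ref{cor: 3-braids} directly from Theorem~\ref{thm: main theorem} (or, more precisely, from its braid reformulation Theorem~\ref{thm: braids}) by exhibiting the closure $\widehat{\beta}$ together with its natural braid-closure diagram $D$ and verifying the hypotheses of the main theorem for $D$. First I would fix the diagram $D$ of $\widehat{\beta}$ coming from the word $\beta=\sigma_1^{a_1}\sigma_2^{a_2}\sigma_1^{a_3}\cdots\sigma_2^{a_k}$: each syllable $\sigma_i^{a_j}$ contributes a twist region with exactly $|a_j|$ crossings, alternating inside the twist region as demanded by Assumption~\ref{assum 1} (here one uses $|a_j|\ge 2$, so every syllable is a genuine twist region of length $\ge 2$, not a lone crossing). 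Because consecutive syllables alternate between $\sigma_1$ and $\sigma_2$, the twist regions are arranged around the $3$-braid closure in the familiar ``necklace'' pattern, and I would pass to this $D$ as the candidate diagram.

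Next I would compute the two graphs $\mathcal{G}_r$ and $\mathcal{G}_g$ associated to $D$. Collapsing each twist region to a weighted red arc produces the graph $\Gamma$ on $S^2$; for a $3$-braid closure this $\Gamma$ is a theta-like graph whose complementary regions checkerboard-colour into a small explicit pattern. The key computation is that the $\sigma_1$-syllables all have their red arcs incident to the same pair of complementary regions of one colour and the $\sigma_2$-syllables to the same pair of the other colour, so that $\mathcal{G}_g$ and $\mathcal{G}_r$ are each obtained by joining a fixed pair of vertices (or a short chain of vertices, after collapsing the length-two cycles permitted by Assumption~\ref{assum 2}) by edges whose weights are the $|a_j|$. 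In particular both graphs are connected — indeed each is a path/tree — which is exactly the second bullet of Theorem~\ref{thm: main theorem}; this is where I would invoke the remark that it suffices to check one of $\mathcal{G}_r$, $\mathcal{G}_g$ is a tree. Then the hypothesis $|a_i|\ge 2$ for all $i$ gives that every edge weight is $\ge 2>1$, and $|a_{i_0}|>2$ gives that some weight is $>2$, which is the first bullet.

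It then remains only to rule out the excluded diagrams $D_k$ of Figure~\ref{fig: D_k new} and their mirrors: since $\beta$ genuinely uses both $\sigma_1$ and $\sigma_2$ (the word alternates and has $k\ge 2$ syllables — and if $k=1$ the closure is a $T(2,a_1)$ torus link, not a knot in the range of interest, so this case is either trivial or excluded), the diagram $D$ has at least two distinct twist regions and hence is not isotopic in $S^2$ to any $D_k$, which has a single twist region. With both bullets of Theorem~\ref{thm: main theorem} verified and the exceptional family avoided, the theorem applies and $K=\widehat{\beta}$ is persistently foliar.

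The main obstacle I expect is the bookkeeping in identifying $\mathcal{G}_r$ and $\mathcal{G}_g$ precisely: one has to track carefully which complementary regions of $\Gamma$ the red arcs from $\sigma_1$- versus $\sigma_2$-syllables attach to, handle the parity issues coming from odd exponents (the $\pi$-rotation appearing in Figure~\ref{fig: twist}), and confirm that after applying the reduction of Assumption~\ref{assum 2} the resulting graphs are still connected with the weight bounds preserved. This is essentially a careful-but-routine diagram chase; once it is in place, everything else is an immediate appeal to Theorem~\ref{thm: main theorem}.
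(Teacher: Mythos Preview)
Your approach is correct and matches the paper's: the corollary is an immediate specialization of Theorem~\ref{thm: braids} to $n=3$, since the alternating form of the word in $\sigma_1,\sigma_2$ automatically satisfies both interleaving conditions there, and the paper gives no further argument. One small correction to your direct verification sketch: the graphs $\mathcal{G}_g$ and $\mathcal{G}_r$ are not multi-edges on a fixed pair of vertices but star graphs on $k/2+1$ vertices each (for instance $\mathcal{G}_g$ has the disc $D_3$ as center, joined by the $\sigma_2$-edges to the $k/2$ regions cut out of the annulus $A_1$ by the $\sigma_1$-arcs), though this does not affect your conclusion that they are trees.
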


\textbf{Outline of main proof and structure of paper.}
A brief outline of the proof of \Cref{thm: main theorem} is as follows. Given a diagram of a knot  $K$ in  $S^3$ with $n$  twist regions, we associate to it an $(n+1)$-component link $ L = K_0 \sqcup \cdots \sqcup K_n $ in  $S^3$ , with the property that $K$ can be recovered by performing Dehn surgery -- with appropriate coefficients  $\frac{1}{k_1}, \dots, \frac{1}{k_n}$ -- on the components $K_1, \dots, K_n$ of $L$. We then define a $2$-complex in the exterior of $L$ and describe how to smooth it into a branched surface. The conditions on the diagram of $K$ stated in \Cref{thm: main theorem} are used to show that this branched surface gives rise to coorientable taut foliations in the exterior of $L$, which intersect the boundary component associated to  $K_0$ transversely in a foliation of any fixed non-meridional slope, and that associated to $K_i$  in a foliation of slope $\frac{1}{k_i}$, for $i = 1, \dots, n$. Capping off these foliations with meridional discs during Dehn surgery yields the desired taut foliations in the exterior of $K$ .

The necessary background on branched surfaces and sutured manifolds is provided in \Cref{sec: Background}. In this section we also recall \Cref{boundary train tracks} by Li and \Cref{cusps implies persistently foliar} by Delman and Roberts, both of which play a key role in the proof of the main theorem.
In \Cref{sec: Borromean rings and Bing doubles}, we construct foliations on surgeries of two three-component links: the Borromean rings and the link $L8n5$. This section contains the proof of \Cref{thm: Borromean}, and also discusses applications to Bing doubles of knots and links. Moreover, it serves to illustrate concrete examples of the more general construction needed
for the proof of \Cref{thm: main theorem}, that is presented in \Cref{sec: main theorem}.
In  \Cref{sec: Tait} we rephrase \Cref{thm: main theorem} in terms of the Tait graphs associated to a knot diagram, and finally, \Cref{sec: applications} contains applications to arborescent knots and certain braid closures.
\newline 

\textbf{Acknowledgments.} 
During the preparation of this paper, I have greatly benefited from discussions with many people; in particular, I would like to thank Ken Baker, Gemma Di Petrillo, Alessio Di Prisa, Tao Li, Paolo Lisca, Bruno Martelli, Alice Merz, and Eric Stenhede. I am also grateful to Charles Delman and Rachel Roberts for carefully explaining the details of the proof of \Cref{cusps implies persistently foliar}.  I would also like to thank the anonymous referee for their careful reading and valuable suggestions.
The author is supported by the FWF project P 34318 
“Cut and Paste Methods in Low Dimensional Topology”.

\section{Background on branched surfaces and sutured manifolds}\label{sec: Background}
In this section we recall background material and state two results that will play a central role in this paper \Cref{boundary train tracks} of Li \cite{L2}, and \Cref{cusps implies persistently foliar} of Delman and Roberts \cite{DRdiamond}.
We begin with branched surfaces, our main tool for constructing foliations, and review the notions of \emph{laminations}, \emph{sink discs}, and \emph{laminar} branched surfaces, which are needed for the  statement of \Cref{boundary train tracks}.
In the second part, we recall elements of Gabai’s theory of sutured manifolds \cite{Gabai}, which will be essential for stating \Cref{cusps implies persistently foliar} and for several arguments in the later sections.
For further background on branched surfaces, see \cite{FO,O}, and for sutured manifolds, see \cite{Gabai}.
We conclude the section with a brief discussion on train tracks.

\subsection{Branched surfaces}
\begin{defn}
A \emph{branched surface with boundary} in a $3$-manifold $M$ is a compact subset $B\subset M$ locally diffeomorphic to one of the models shown of \Cref{branched surface} $a)$ (in $\matR^3$) or \Cref{branched surface} $b)$ (in the closed half-space), where $\partial B:= B\cap \partial M$ is represented with a bold line.
\end{defn}

\begin{figure}[]
    \centering
    \includegraphics[width=0.8\textwidth]{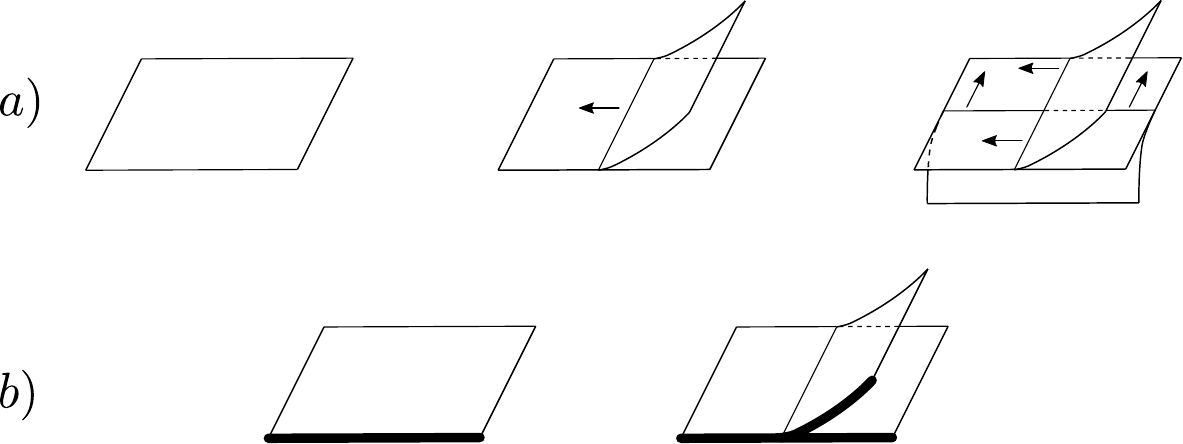}
    \caption{Local models for a branched surface, with cusp directions.}
    \label{branched surface}
\end{figure}

Branched surfaces generalise train tracks from surfaces to $3$-manifolds; when non-empty, $\partial B$ defines a train track in $\partial M$, \ie, an embedded graph in $\partial M$ with well-defined tangencies at the vertices. For more details and the basics on train tracks, we refer the reader to \cite{Martelli}.

One can identify two distinguished subsets in a branched surface $B$: the \emph{branch locus} and the set of \emph{triple points}. The \emph{branch locus} consists of points where $B$ is not locally homeomorphic to a surface, and the set of \emph{triple points} of $B$ consists of points where the branch locus is not locally homeomorphic to an arc. For example, the rightmost model in \Cref{branched surface} $a)$ contains a triple point.

The complement of the branch locus in $B$ is a union of connected surfaces. The abstract closures of these surfaces -- taken under any path metric on $M$ -- are called the \emph{sectors} of $B$. Similarly, the complement of the set of triple points within the branch locus is a union of $1$-dimensional connected manifolds. To each of these manifolds, one can associate an arrow in $B$ indicating the direction of the smoothing, as illustrated in \Cref{branched surface}. These arrows are called \emph{cusp directions}.

\begin{figure}[h]
    \centering
    \includegraphics[width=0.6\textwidth]{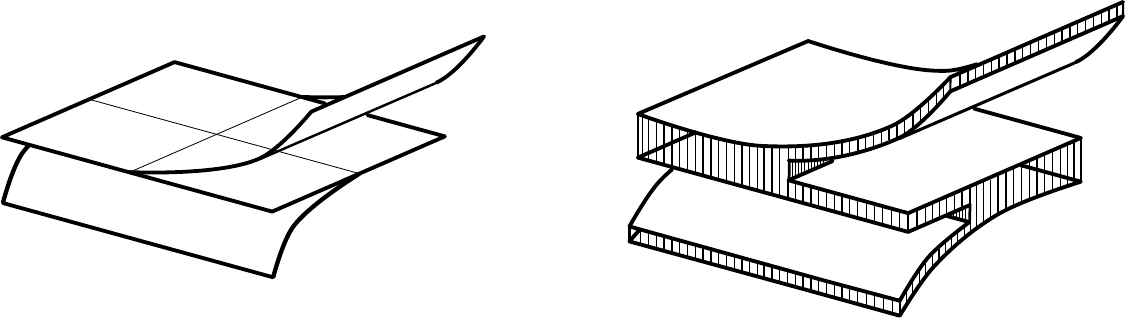}
    \caption{Regular neighbourhood of a branched surface.}
    \label{regular neighbourhood}
\end{figure}

We denote by $N_B$ a fibered regular neighbourhood of $B$, well-defined up to isotopy, whose local structure is depicted in \Cref{regular neighbourhood}.
The boundary of $N_B$ naturally decomposes into three compact subsurfaces: the \emph{horizontal boundary} $\partial_h N_B$, the \emph{vertical boundary} $\partial_v N_B$, and $N_B\cap \partial M$. The horizontal boundary is transverse to the interval fibers of $N_B$, while the vertical boundary intersects them, if at all, in one or two proper closed subintervals contained in their interiors. Collapsing each interval fiber of $N_B$ to a point produces a branched surface in $M$ isotopic to $B$, with the image of $\partial_v N_B$ coinciding with the branch locus of this branched surface. By slight abuse of notation, we identify this branched surface with $B$, and consider $N_B$ as a $[0,1]$-bundle over $B$, with projection map $\pi: N_B\rightarrow B$.

A branched surface can be obtained from another through the following operation, called \emph{splitting}.

\begin{defn}
Let $B_1$ and $B_2$ be branched surfaces in $M$, and let $F$ be a compact orientable surface. We say that $B_2$ is obtained by \emph{splitting} $B_1$ if 
$$
N_{B_1}=N_{B_2}\cup J, \text{ where } J=F\times [0,1],
$$
with the following properties:

\begin{itemize}
\item $F\times \{0,1\}\subset \partial_h N_{B_2}$.
\item $(\partial F \times [0,1])\cap \partial N_{B_2}\subset \partial_v N_{B_2}$, and the $[0,1]$-fibers of $\partial F\times [0,1]$ coincide with the $[0,1]$-fibers of $\partial_v N_{B_2}$.
\end{itemize}
\end{defn}

\Cref{splitting} shows examples of splitting in the case of train tracks. Branched surfaces provide a useful tool to construct \emph{laminations} on $3$-manifolds. 

\begin{figure}[]
    \centering
    \includegraphics[width=0.55\textwidth]{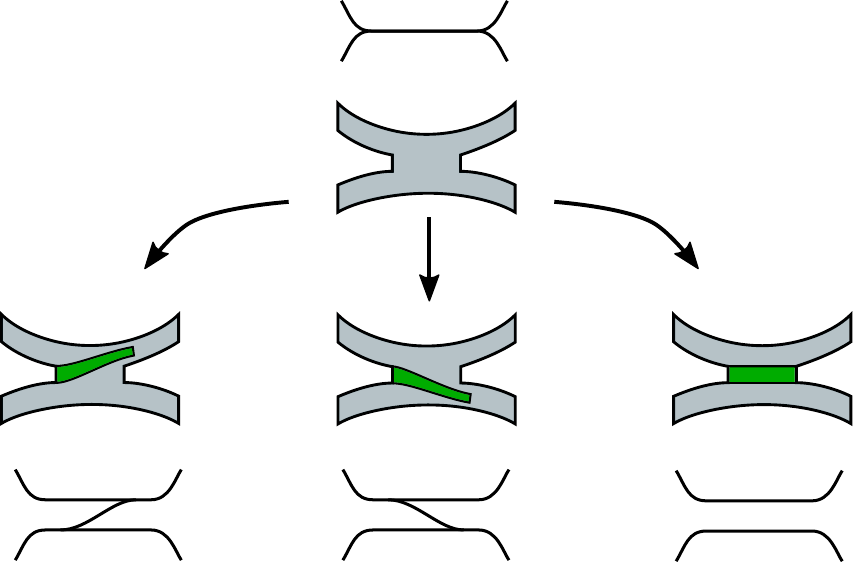}
    \caption{Three possible splittings of a train track. The green-coloured region is the  $[0,1]$-bundle $J$.}
    \label{splitting}
\end{figure}

\begin{defn}\label{lamination abstr}
A lamination $\mathcal{L}$ in $M$ is a decomposition of a closed subset of $M$ into a union of injectively immersed connected surfaces, called \emph{leaves} of $\mathcal{L}$, such that $(M, \mathcal{L})$ is locally homeomorphic to $(\matR^3, \matR^2\times C)$ or to $(\matH^3, \matH^2\times C)$. Here, $C$ is a closed subset of $\matR$ depending on the chart, $\matH^2$ is the closed half-plane, and $\matH^3=\matH^2\times \mathbb{R}$ is the closed half-space.
\end{defn}

\begin{defn}\label{lamination}
Let $B$ be a branched surface in a $3$-manifold $M$. A lamination $\mathcal{L}$ is \emph{carried by B} if $\mathcal{L}\subset N_B$ and $\mathcal{L}$ intersects the fibers of $N_B$ transversely. We say that $\mathcal{L}$ is \emph{fully carried} by $B$ if it is carried by $B$ and intersects every fiber of $N_B$. If $F$ is a surface carried by $B$, we say that $F$ \emph{passes through} a sector of $B$ if $F$ intersects some fibers of $N_B$ lying over its interior.
\end{defn}

We will assume that the mappings of the leaves into $M$
are smooth immersions. Analogous definitions apply to train tracks on surfaces, and, for later use, we introduce the following notion.
\begin{defn}
Let $\tau$ be a train track on a torus $T$. A rational slope $s$ on $T$ is \emph{realised} by $\tau$ if $\tau$ fully carries a union of finitely many curves of slope $s$.
\end{defn}

In \cite{L}, Li introduces the notion of \emph{sink disc}.

\begin{defn}
Let $B$ be a branched surface in $M$ and let $S$ be a sector in $B$.
We say that $S$ is a \emph{sink disc} if $S$ is homeomorphic to a disc, $S\cap \partial M=\emptyset$, and the cusp direction of any smooth curve or arc in its boundary points into $S$.
We say that $S$ is a \emph{half sink disc} if $S$ is a disc, $S  \cap \partial M\neq \emptyset$, and the cusp direction of any smooth arc in $\partial S\setminus \partial M$ points into $S$.
\end{defn}

See \Cref{discs} for some examples of sink discs and half sink discs. Notice that if $S$ is a half sink disc the intersection $\partial S \cap \partial M$, represented with  bold lines, can be disconnected.

\begin{figure}[]
    \centering
    \includegraphics[width=0.5\textwidth]{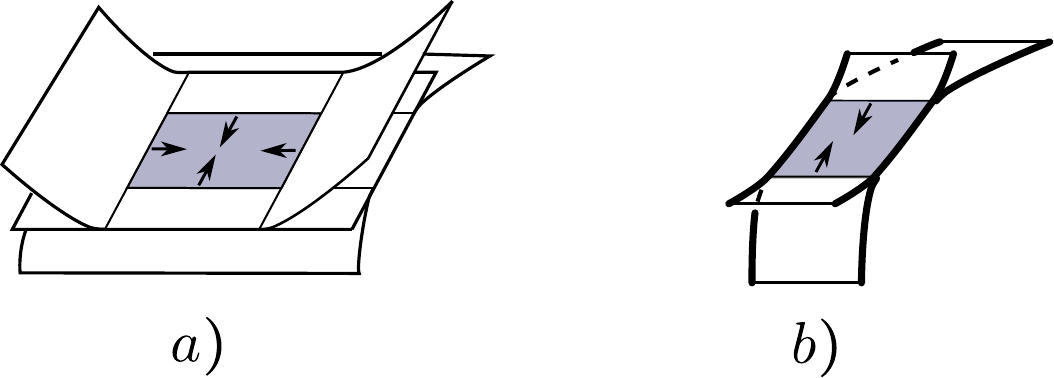}
    \caption{Examples of $a)$ sink discs and $b)$ half sink discs.}
    \label{discs}
\end{figure}

If $B$ contains a sink disc or a half sink disc there is a simple way to eliminate it: it is enough to blow an air bubble in its interior\footnote{More precisely, we split $B$ along a small disc contained in the sink disc sector.}, as in \Cref{air bubble}, so to obtain a new branched surface $B'$. However there is really no difference between $B$ and $B'$: in fact it is not difficult to see that $B$ carries a lamination if and only if $B'$ carries a lamination. 

We do not want to artificially eliminate sink discs with this procedure, and so we recall the notion of \emph{trivial bubble}.
A connected component of $M\setminus {\rm int}(N_B)$ is a $\matD^2\times [0,1]$ \emph{region} if it is homeomorphic to a ball and its boundary can be subdivided into an annular region, corresponding to a component of $\partial_v N_B$, and two $\matD^2$ regions corresponding to components of $\partial_h N_B$. A $\matD^2 \times [0,1]$ region is \emph{trivial} if the projection map $\pi: N_B\to B$ is injective on ${\rm int}(\matD^2)\times \{0,1\}$. In this case, the image of $\matD^2\times \{0,1\}$ via the collapsing map is called a \emph{trivial bubble} in $B$. 
When $M$ and $B$ have boundary these definitions can be generalised in a natural way to the relative case, see \cite{L2}. Trivial bubbles and trivial $\matD^2\times [0,1]$ regions are created when we eliminate sink discs as in \Cref{air bubble}.

\begin{figure}[h]
    \centering
    \includegraphics[width=0.4\textwidth]{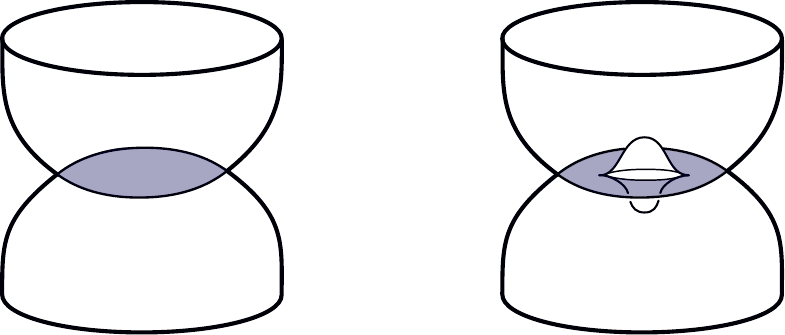}
    \caption{The shaded sector on the left is a sink disc. In the branched surface on the right, this sink disc has been removed, and a trivial bubble has been created.}
    \label{air bubble}
\end{figure}

In \cite{L}, Li introduces the definition of laminar branched surface, and in \cite{L2} he generalises this definition to branched surfaces with boundary as follows.

\begin{defn}[\cite{L,L2}]\label{def: laminar}
Let $B$ be a branched surface in a $3$-manifold $M$. We say that $B$ is \emph{laminar} if $B$ has no trivial bubbles and the following hold:

\begin{enumerate}
    \item $\partial_h N_B$ is incompressible and $\partial$-incompressible in $M\setminus {\rm int}(N_B)$, and no component of $\partial_h N_B$ is a sphere or a properly embedded disc in $M$.
    \item There is no monogon in $M\setminus {\rm int}(N_B)$, \ie, no disc $D\subset M\setminus {\rm int}(N_B)$ such that 
    \[
    \partial D=D\cap N_B=\alpha\cup \beta,
    \]
    where $\alpha$ is in an interval fiber of $\partial_v N_B$, and $\beta$ is an arc in $\partial_h N_B$.
    \item $M\setminus {\rm int}(N_B)$ is irreducible, and $\partial M\setminus {\rm int}(N_B)$ is incompressible in $M\setminus {\rm int}(N_B)$.
    \item $B$ contains no Reeb branched surfaces (see \cite{GO} for the definition).
    \item $B$ has no sink discs or half sink discs.
\end{enumerate}
\end{defn}

The requirement of $\partial$-incompressibility in $(1)$ means the following. Suppose that $D$ is a disc in $M\setminus {\rm int}(N_B)$ with ${\rm int}(D)\subset M\setminus N_B$ and $\partial D=\alpha\cup \beta$, where $\alpha$ is an arc in $\partial_h N_B$ and $\beta$ is an arc in $\partial M$. Then $\partial$-incompressibility requires that there exists a disc $D'\subset \partial_h N_B$ with $\partial D'=\alpha\cup \beta'$, where $\beta'=\partial D'\cap \partial M$.
\newline

The key property of laminar branched surfaces is that they fully carry essential laminations\footnote{For the definition of essential laminations, see \cite{GO}.}  \cite{L}, and when the ambient manifold $M$ has torus boundary, they can also provide essential laminations in fillings of $M$, as the theorem below shows.

We fix the following notation. Let $M$ be a manifold whose boundary is union of tori $T_1, \dots, T_k$, and let $h\leq k$ be a positive integer. Given $(s_1,\dots, s_h)$ a rational \emph{multislope}, \ie, the choice of a rational slope $s_i$ on $T_i$ for $i\leq h$, we denote by $M(s_1,\dots, s_h)$ the manifold obtained by filling the boundary component $T_i$ along the slope $s_i$, for $i=1, \dots, h$. When $h < k$, this manifold has non-empty boundary.

\begin{teo}[{\cite[Theorem~2.5]{L2}}]\label{boundary train tracks}
Let $M$ be a compact $3$-manifold whose boundary is union of tori $T_1,\dots, T_k$. Let $B$ be a laminar branched surface in $M$ with $B\cap \partial M\subset T_1\cup \cdots \cup T_h$, for some $h\leq k$, and assume that, for each $i=1, \dots, h$, the complement $T_i \setminus \partial B$ is a union of bigons. Let $(s_1,\dots, s_h)$ be any rational multislope realised by the train track $\partial B$, and suppose that $B$ does not carry a torus that bounds a solid torus in $M(s_1,\dots,s_h)$. Then, there exists an essential lamination $\mathcal{L}$ in $M$ fully carried by $B$ that intersects $T_i$ in parallel simple closed curves of slope $s_i$, for $i=1, \dots, h$. Moreover, this lamination extends to an essential lamination of the filled manifold $M(s_1,\dots, s_h)$.
\end{teo}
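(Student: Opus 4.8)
The plan is to construct the lamination $\mathcal{L}$ by the standard procedure of splitting the branched surface $B$ open to a lamination, and then to read off essentiality, and extendability across the Dehn fillings, from the five laminar conditions together with the no–solid–torus hypothesis. The first ingredient is the boundary data: since $(s_1,\dots,s_h)$ is realised by $\partial B$, the train track $\partial B$ fully carries a nonempty finite multicurve $\gamma=\gamma_1\sqcup\cdots\sqcup\gamma_h$, with $\gamma_i$ a union of slope-$s_i$ curves on $T_i$, which we view inside $N_B\cap\partial M$, transverse to the $I$-fibers there and meeting every such fiber. Here the hypothesis that $T_i\setminus\partial B$ is a union of bigons is what makes $\partial B$ ``full'' on each $T_i$, so that a carried multicurve of a realised slope runs over every branch of $\partial B$ and the boundary splittings below can be performed without destroying the slope.

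The technical core is to split $B$ open to a lamination carrying $\gamma$, and this is where the absence of sink discs and half sink discs enters decisively. Following Li \cite{L,L2}, one builds an infinite nested sequence of fibered neighbourhoods $N_B=N_{B_0}\supset N_{B_1}\supset N_{B_2}\supset\cdots$, each $B_{j+1}$ obtained from $B_j$ by a splitting supported near the branch locus (and, near the boundary, performed so as to keep $\partial B_j$ carrying $\gamma$), arranged so that $\mathcal{L}:=\bigcap_j N_{B_j}$ is transverse to the $I$-fibers of $N_B$, meets every fiber, and has empty interior; that is, $\mathcal{L}$ is a lamination fully carried by $B$ with $\mathcal{L}\cap T_i$ isotopic to $\gamma_i$. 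The role of condition (5) is exactly to keep this process from stalling: in every $B_j$ each disc (or half-disc) sector has an outgoing cusp direction, so the next split can always be chosen to shorten fibers over a definite sub-bundle, and careful bookkeeping of these choices forces $\bigcap_j N_{B_j}$ to collapse to a genuine lamination rather than stabilise to a sub-branched-surface or lose whole fibers. I expect this step — making the splitting sequence precise and proving that it converges to a lamination — to be the main obstacle; it is essentially the heart of Li's papers.

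It then remains to verify that $\mathcal{L}$ is essential and extends over the fillings. Conditions (1)--(4) of Definition \ref{def: laminar} are designed precisely so that any lamination fully carried by $B$ is essential in the sense of Gabai--Oertel \cite{GO}: condition (1) rules out sphere and disc leaves and compressing or $\partial$-compressing discs, condition (2) forbids monogons and the associated failures of incompressibility, condition (3) gives irreducibility of the complement and incompressibility of $\partial M$ minus the lamination, and condition (4) excludes Reeb components; so $\mathcal{L}$ is essential in $M$ and meets each $T_i$ in parallel slope-$s_i$ curves. Finally, in $M(s_1,\dots,s_h)$ each curve of $\mathcal{L}\cap T_i$ bounds a meridian disc of the glued solid torus, so capping these off (and filling the product regions between consecutive parallel capped leaves) yields a lamination $\widehat{\mathcal{L}}$ carried by the image of $B$; the only genuinely new obstruction to essentiality created by the filling is a capped-off torus leaf that bounds a solid torus, and this is exactly what the hypothesis that $B$ carries no such torus in $M(s_1,\dots,s_h)$ rules out. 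The remaining potential failures — a new sphere leaf, a new monogon, or reducibility of the complement — are again excluded by conditions (1)--(4) together with the bigon hypothesis, which controls how $\partial_h N_B$ meets the filling slope, and $\widehat{\mathcal{L}}$ is the asserted essential lamination of $M(s_1,\dots,s_h)$.
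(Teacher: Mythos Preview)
Your proposal is reasonable and would in principle work, but it takes a different route from the one the paper sketches (following Li \cite{L2}). You build the lamination in $M$ directly, via an infinite sequence of splittings of $B$ that respect the boundary data, and only afterwards cap off the resulting lamination in the filled manifold. The paper's approach reverses the order: one first performs a \emph{finite} splitting of $B$ only in a neighbourhood of $\partial M$, so that $B$ meets each $T_i$ in parallel simple closed curves of slope $s_i$; then one caps off the branched surface itself with meridional discs of the glued solid tori, obtaining a closed branched surface $B(s_1,\dots,s_h)$ in $M(s_1,\dots,s_h)$; one checks that $B(s_1,\dots,s_h)$ is laminar (this is where the bigon and no--solid--torus hypotheses are used), and then invokes the already-established closed-manifold result \cite[Theorem~1]{L} to produce the essential lamination. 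The lamination in $M$ is recovered by restricting.

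The advantage of the paper's route is that it cleanly reduces to the closed case, so all the hard work of controlling an infinite splitting sequence is already done in \cite{L} and need not be redone with boundary. Your route requires carrying the boundary train-track data through the entire infinite splitting process, which is more delicate to make precise (you flag this yourself as the main obstacle). On the other hand, your approach is more self-contained and makes the role of each laminar condition explicit at the level of the lamination rather than the capped branched surface.
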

\begin{rem}
In \cite{L2}, the theorem is stated for $M$
with connected boundary, but, as noted in \cite{KR1}, the same proof works for manifolds with multiple boundary components. Also, while \cite{L2} assumes that $M$ is irreducible and has incompressible boundary, the proof remains valid without these hypotheses. Indeed, if $M$ contains a laminar branched surface, then $M$
is necessarily irreducible and has incompressible boundary \cite[Theorem~1]{GO}.
\end{rem}

\begin{rem}
The conclusion of \Cref{boundary train tracks} also differs slightly from that in \cite{L2}: the additional details are drawn from the proof itself. The proof proceeds by splitting the branched surface $B$ in a neighbourhood of $\partial M$, so that it intersects each $T_i$ in parallel simple closed curves of slopes $s_i$, for $i=1,\dots h$. Gluing the meridional discs of the solid tori to $B$ during the Dehn filling produces a branched surface $B(s_1,\dots, s_h)$ in $M(s_1,\dots, s_h)$, which is laminar and fully carries an essential lamination by \cite[Theorem~1]{L}. This essential lamination is obtained by extending an essential lamination in $M$ that intersects each $T_i$ in parallel simple closed curves of slopes $s_i$, for $i=1,\dots, h$.
\end{rem}

\subsection{Sutured manifolds}
We now recall some basic definitions from the theory of \emph{sutured manifolds}, introduced by Gabai in \cite{Gabai}. This framework will be useful for stating and proving several results in \Cref{sec: main theorem}.
\begin{defn}
A pair $(M, \gamma)$ is a \emph{sutured manifold} if $M$ is a compact oriented $3$-manifold and $\gamma = A(\gamma) \cup T(\gamma)$ is a subsurface of $\partial M$, with $A(\gamma) \cap T(\gamma) = \emptyset$. Here, $A(\gamma)$ is a disjoint union of annuli and $T(\gamma)$ is a union of tori. Each annulus in $A(\gamma)$ contains a homologically non-trivial oriented simple closed curve, called a \emph{suture}.

Furthermore, every component of $R(\gamma) = \partial M \setminus \operatorname{int}(\gamma)$ is oriented so that the orientations are coherent with the sutures: if $\delta$ is a component of $\partial R(\gamma)$ given the boundary orientation, then $\delta$ represents the same homology class in $H_1(\gamma, \mathbb{Z})$ as some suture.
\end{defn}

We define $R_+(\gamma)$ as the union of those components of $R(\gamma)$ whose normal vectors point into $M$, and $R_-(\gamma)$ as the union those components of $R(\gamma)$ whose normal vectors point out of $M$. This convention is the opposite of that in \cite{Gabai}, but it will more natural for our setting.
\Cref{fig: sutured example}~--~$a)$ and \Cref{fig: sutured example}~--~$b)$ show two examples of sutured manifolds, while \Cref{fig: sutured example}~--~$c)$ illustrates a non-example.

\begin{figure}[h]
    \centering
    \includegraphics[width=1\textwidth]{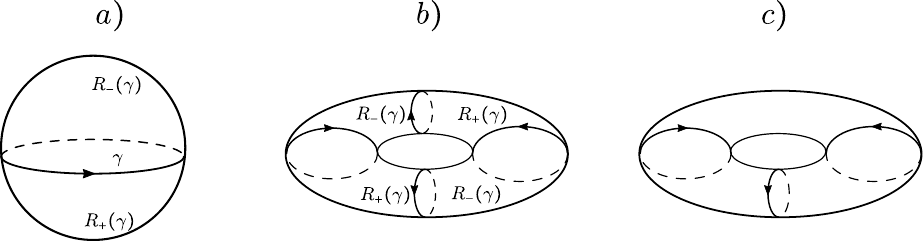}
    \caption{$a)$ and $b)$ are examples of sutured manifolds, but $c)$ is not. In the picture, $\gamma$ is obtained by considering a neighbourhood of the sutures.}
    \label{fig: sutured example}
    \end{figure}

\begin{defn}
A sutured manifold $(M, \gamma)$ is \emph{taut} if $M$ is irreducible and $R(\gamma)$ is both incompressible and Thurston norm minimising in $H_2(M,\gamma)$.
\end{defn}

Examples of taut sutured manifolds are given by the \emph{product sutured manifolds}.

\begin{defn}
Let $S$ be a compact oriented surface with (possibly empty) boundary. A \emph{product sutured manifold} is a sutured manifold $(M, \gamma)$, with $M=S\times [0,1]$ and $\gamma=\partial S\times [0,1]$, where the sutures $\partial S$ are given the boundary orientation.
\end{defn}
For example, \Cref{fig: sutured example}~$a)$ is a product sutured ball.
We record here the following simple observation, that will be useful in the next sections.

\begin{rem}\label{rem: product ball}
Suppose that $(M, \gamma)$ is a sutured manifold, where $M$ is a $3$-ball and $R_{+}(\gamma)$ and $R_-(\gamma)$ are both connected and non-empty. Then, $(M, \gamma)$ is a product sutured ball.
\end{rem}

Sutured manifolds arise naturally in presence of branched surfaces. In fact, if $B$ is a cooriented branched surface in a closed, oriented $3$-manifold $M$, then the pair ($M\setminus {\rm int} (N_B), \partial_v N_B)$ is a sutured manifold. In this case, $R_{+}(\gamma)$ (resp. $R_-(\gamma)$) consists of the components of $\partial_h N_B$ whose coorientation points out of (resp. into) $N_B$. See \Cref{fig: sutured branched surface}.

\begin{figure}[h]
    \centering
    \includegraphics[width=0.6\textwidth]{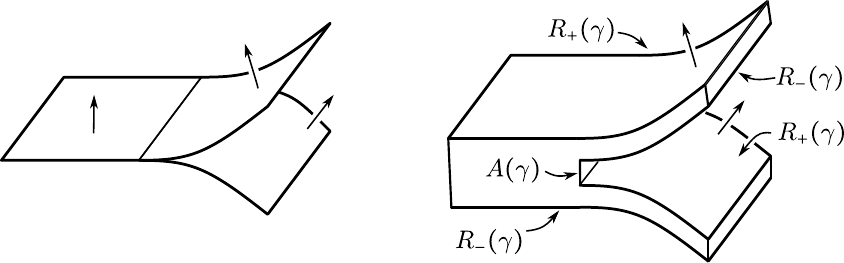}
    \caption{Structure of the exterior of a branched surface as a sutured manifold.}
    \label{fig: sutured branched surface}
\end{figure}

When $M$ has boundary consisting of a union of tori, and the complementary regions of $\partial B$ in $\partial M$ are bigons, annuli and tori, we define ($M\setminus {\rm int} (N_B), \gamma)$ as a sutured manifold, where in this case $\gamma=\partial_v N_B \cup (\partial M\setminus {\rm int} (N_B))$.

We conclude this subsection by stating a result of Delman and Roberts \cite{DRdiamond} that we will use in the next section. Before doing so, we introduce two definitions.

\begin{defn}[\cite{DRdiamond}]
A cooriented branched surface $B$ is \emph{taut} if the exterior of $B$ is a taut sutured manifold and if through every sector of $B$ there is a closed oriented curve that is
positively transverse to $B$.
\end{defn}

If $\mathcal{L}$ is a lamination in $M$ carried by a branched surface $B$, we will suppose that $\partial_h N_B$ is contained in $\mathcal{L}$. Moreover, we denote by $M_{|\mathcal{L}}$ the metric completion of $M\setminus \mathcal{L}$ under any path metric on  $M\setminus \mathcal{L}$.

\begin{defn}[\cite{DRdiamond}]
Suppose that $B$ fully carries a lamination $\mathcal{L}$. A component $A$ of $\partial_v N_B$ satisfies the \emph{noncompact extension property} relative to $(B, \mathcal{L})$ if there exists a proper embedding of $[0,1]\times [0, \infty)$  in $M_{|\mathcal{L}} \cap N_B$ satisfying the following conditions:
\begin{itemize}
    \item the image of $[0,1] \times \{0\}$ is contained in a fiber of $N_B$ and contains a fiber of $A$;
    \item the image of $\{0,1\}\times [0,\infty)$ is contained in leaves of $\mathcal{L}$.
\end{itemize}  
\end{defn}

\begin{teo}[{\cite[Proposition~3.10]{DRdiamond}}]\label{cusps implies persistently foliar}
Let $\partial_0 M$ denote a torus boundary component of $M$, and suppose $B$ is a taut,
cooriented branched surface in $M$ that is disjoint from $\partial_0 M$ and that fully carries a lamination $\mathcal{L}$. Let $(Y, \partial_v Y)$ denote the complementary region of $N_B$ that contains $\partial_0 M$. Suppose that $(Y, \partial_v Y)$ is homeomorphic to
$$
(\partial_0 M\times [0, 1], \mathcal{A}_1 \cup \dots \cup \mathcal{A}_{2l}),
$$where \(\partial_i M\times\{0\}=\partial_i M\) and \(\mathcal{A}_1,\dots,\mathcal{A}_{2n_i}\) are pairwise disjoint essential annuli in \(\partial_i M\times\{1\}\) satisfying the noncompact extension property relative to \((B,\mathcal{L})\). Then, for any multislope \((s_1,\dots,s_m)\) on \(\partial_1 M,\dots,\partial_m M\) such that no \(s_i\) is isotopic in \(Y_i\) to the common core of the annuli \(\mathcal{A}_k\), there exists a cooriented taut foliation that extends \(\mathcal{L}\) and intersects \(\partial_1 M,\dots,\partial_m M\) transversely in a linear foliation of slope \((s_1,\dots,s_m)\).
\end{teo}

\subsection{Train tracks and rational slopes}\label{subsec: train tracks and rational slopes}
We conclude the section with a brief discussion on train tracks on the torus, describing an explicit method to compute the slopes they realise.

A slope on a torus $T$ is an element of the projective space of $H_1(T, \mathbb{R})$. We fix an orientation on $T$, and a meridian-longitude basis, \ie, two oriented essential simple closed curves, $\mu$ and $\lambda$, with $\mu\cdot \lambda=1$, where $\cdot$ denotes the algebraic intersection.
With this chosen basis, the set of slopes on $T$ can be identified with $\overline{\mathbb{R}}=\mathbb{R}\cup \{\infty\}$. Rational slopes on $T$ correspond to elements $\pm (p\mu+q\lambda)\in H_1(T, \Z)$, for coprime integers $p,q$. Here, we set $0$ and $1$ to be coprime, and, with some abuse of notation, we write the homology class of a closed curve $\gamma\subset T$ simply as $\gamma$, omitting the brackets. It is a well-known fact that rational slopes correspond to isotopy classes of essential simple closed curves on $T$. 

Given an oriented train track $\tau$ in $T$, the slopes realised by it can be computed by assigning \emph{weight systems}.

\begin{defn}\label{def: weigth system}
A \emph{weight system} on an oriented train track $\tau\subset T$ is an assignment of a positive rational number, called a \emph{weight}, to each branch of $\tau$, such that at every branch point the sum of the incoming weights equals the sum of the outgoing ones.
\end{defn}

\begin{figure}[]
    \centering
    \includegraphics[width=1\textwidth]{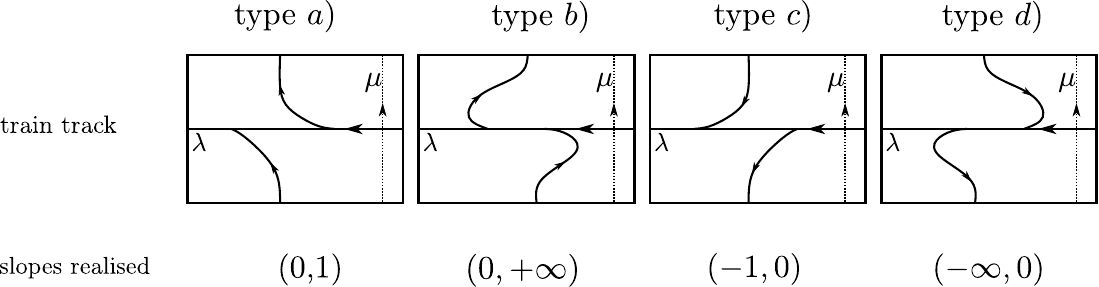}
    \caption{Examples of oriented train tracks on $T$, and the corresponding intervals of rational slopes they realise.}
    \label{fig:train track table}
\end{figure}
A weight system $w$ defines an element of $H_1(T,\mathbb{Q})$, given by the class of $\sum_i w_i c_i$, where the $c_i$ are the oriented branches of $\tau$ and $w_i$ their weights. The \emph{slope} of $w$ is the projective class of this element, generalising the notion of slope for simple closed curves. Concretely, the slope can be computed as $\frac{w_{\mu}}{w_{\lambda}}$, where $w_\mu$ and $w_\lambda$ denote the weighted intersections of $\tau$ with $\lambda$ and $\mu$, respectively.
This notion is relevant for us because of the following well-known fact.
\begin{lemma}
Let $\tau$ be an oriented train track on $T$. A rational slope s on $T$ is realised by $\tau$ if and only if there exists a weight system $w$ on $\tau$ with slope $s$.\qed
\end{lemma}

\Cref{fig:train track table} shows four examples of train tracks on a torus, together with the sets of rational slopes they realise. These sets can be explicitly computed by assigning weight systems and determining their slopes.

In the course of the paper, we will encounter train tracks of the four types described in \Cref{fig:train track table}, or those that we call of \emph{type $t_1+t_2$}, where $t_1$ and $t_2$ are $a), b), c)$ or $d)$. These are obtained by stacking a train track of type $t_1$ and one of type $t_2$ horizontally, as in \Cref{fig:type a+c}.

\begin{figure}[h]
    \centering
    \includegraphics[width=0.4\textwidth]{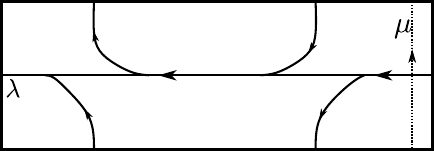}
    \caption{A train track of type $a)+c)$.}
    \label{fig:type a+c}
\end{figure}

The next lemma -- which follows by a direct computation with weight systems -- will be useful when computing the sets of slopes realised by our train tracks.

\begin{lemma}\label{lemma: unlinked}
Let $\tau$ be an oriented train track of type $t_1+t_2$. Then, a rational slope $r$ is realised by $\tau$ if and only if $r=r_1+ r_2$, where $r_i$ is a rational slope realised by the train track of type $t_i$.\qed
\end{lemma}

\section{Two examples}\label{sec: Borromean rings and Bing doubles}
In this section we construct foliations on the surgeries of two three-component links: the Borromean rings and the link $L8n5$. The purpose of this section is twofold: although the results it contain may at first appear unrelated to the main theorem of the paper, the argument presented in this section contain all the key ideas involved in its proof, and serve as illustrative examples to guide the reader in the following sections. In addition, some of the results presented here are of independent interest, the main one being the following.

\begin{teo}\label{thm: Borromean}
    Let $\mathcal{B}$ be the Borromean rings. Then, every Dehn surgery on $\mathcal{B}$ supports a coorientable taut foliation if and only if it is not an $L$-space. More precisely, if $r_1, r_2, r_3$ are rational numbers, then:
    \begin{itemize}
    \item The $(r_1,r_2,r_3)$-surgery on $\mathcal{B}$ is an $L$-space if and only if $(r_1,r_2,r_3)\in [1, \infty)^3 \cup (-\infty, -1]^3$;
    \item The $(r_1,r_2,r_3)$-surgery on $\mathcal{B}$ supports a coorientable taut foliation otherwise.
    \end{itemize}
\end{teo}
Recall that the Borromean rings is the three-components hyperbolic link depicted in \Cref{fig: B}-left. Also recall that it is an amphichiral link, and that any permutation of its components can be realised by an isotopy of the link.
At the end of the section we show how this theorem, together with its proof, can be applied to Bing doubles of knots and links.
\newline

\subsection{Borromean rings.}
We start by fixing some notation that we will use during the section. Let $\nu K$ be a closed tubular neighbourhood of a knot $K$ in $S^3$, oriented with the orientation induced by $S^3$. We orient $\partial \nu K$ as the boundary of $\nu K$, and for simplicity we refer to slopes on $\partial \nu K$ as \emph{slopes on $K$}. Fix an orientation on $K$, and consider the canonical meridian-longitude basis $\mu,\lambda$, where $\lambda$ is the canonical longitude of $K$, oriented consistently with $K$, and $\mu$ is the meridian of $K$, oriented so that $\mu$ has linking number $+1$ with $K$, and use it to identify slopes on $K$ with elements in $\overline{\mathbb{R}}$. Observe that reversing the orientation of $K$ changes the signs of both $\mu$ and $\lambda$, so  the identification between slopes on $K$ and $\overline{\mathbb{R}}$ is independent of the choice of orientation of $K$. If $L\subset S^3$ is a link with multiple components, we consider on each component the canonical meridian-longitude basis obtained as described above. 
\newline

Our first step in constructing foliations on surgeries of the Borromean rings $\mathcal{B}$ is to consider a different diagram of $\mathcal{B}$, depicted in \Cref{fig: borr discs}. We denote its components by $K_0, K_1,$ and $K_2$, where $K_0$ lies in the projection sphere, while $K_1$ and $K_2$ intersect the sphere transversely in two points, and bound two twice-punctured discs $D_1$ and $D_2$ (see \Cref{fig: borr discs} -- right). We denote the exterior of $\mathcal{B}$ by $M$, and let $\partial_i M$ be the boundary component corresponding to $K_i$, for $i = 0, 1, 2$.

\begin{figure}[h]
    \centering
    \includegraphics[width=0.65\textwidth]{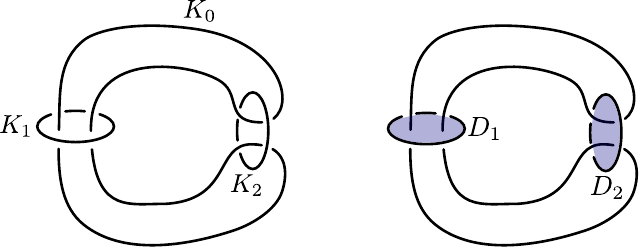}
    \caption{Left: a diagram of the Borromean rings. Right: the twice-punctured discs, $D_1$ and $D_2$, bounded by $K_1$ and $K_2$ respectively.}
    \label{fig: borr discs}
\end{figure}

The construction proceeds as follows. We start by defining, in \Cref{subsub example: Sigma}, a $2$-complex $\Sigma$ in $M$, and assign coorientations to the surfaces composing it. After having slightly perturbed $\Sigma$, we use the coorientations to smooth it to a branched surface $\overline{B}$ in $M$ with two \emph{meridional cusps} (as in  \Cref{def: cusps}) along a torus parallel to $\partial_0 M$, and that intersects the boundary components $\partial_1 M$ and $\partial_2 M$ in suitable train tracks. This construction is carried out in \Cref{subsub example: Bbar}. Finally, in \Cref{subsub example: B}, we use a branched surface $B$, obtained by a simple modification of $\overline{B}$, to provide the desired foliations.

\subsubsection{The $2$-complex $\Sigma$.}\label{subsub example: Sigma}
We begin by fixing a torus $T$ parallel to $\partial_0 M$. Specifically, we set $T=\partial_0 M\times \{1\}$, where $\nu\partial_0 M=\partial_0 M \times [0,1]$ is a fixed collar of $\partial_0 M$ with $\partial_0 M= \partial_0 M\times \{0\}$.
The $2$-complex $\Sigma$ is defined as the union of:

\begin{itemize}
\item The torus $T$.
\item The twice-punctured discs bounded in $M\setminus \big{(}{\partial_0 M \times [0,1)}\big{)}$ by $K_1$ and $K_2$. With a slight abuse of notation, we continue to denote this discs by $D_1$ and $D_2$.
\item The closures of the connected components of $S^2\setminus (D_1\cup D_2 \cup \nu\partial_0 M)$. We denote this set by $\mathcal{S}$.
\end{itemize}

\begin{figure}[]
    \centering
    \includegraphics[width=0.35\textwidth]{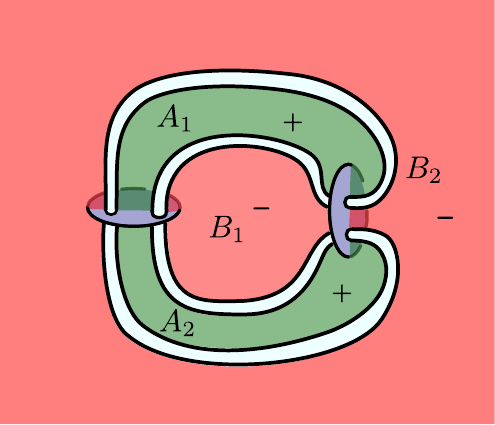}
    \caption{The figure illustrates the coorientations of the surfaces in $\Sigma$ that lie in the projection sphere $S^2$. The viewpoint is from the ball $X_1$ in the complement of $\Sigma$.}
    \label{fig: sec 3 coorientations in S^2}
\end{figure}

\begin{lemma}\label{lemma sec3: exterior of Sigma}
The complement of a regular neighbourhood of $\Sigma$ in $M$ is homeomorphic to the disjoint union of two balls, $X_1$ and $X_2$, and a product $T\times [0,1]$.
\end{lemma}
\begin{proof}
The complement of a regular neighbourhood of $\Sigma$ can be obtained by successively cutting $M$ along the surfaces composing $\Sigma$. We begin by cutting along the torus $T$. Since $T$ is parallel to the boundary component $\partial_0 M$, this produces two connected components: one is a product $T \times [0,1]$, and the other can be identified with $M$ itself. Next, we cut the latter component along the surfaces in $\mathcal{S}$. This is equivalent to cutting $S^3$ along the projection sphere, obtaining two balls, and then removing from each ball the tubular neighbourhoods of two properly embedded boundary-parallel arcs (one coming from $K_1$ and the other from $K_2$). Finally, in each of the two resulting components, the discs $D_1$ and $D_2$ become boundary-compressing discs for the boundary-parallel arcs. Cutting along them therefore yields two balls, denoted $X_1$ and $X_2$.
\end{proof}

We now assign coorientations to the surfaces composing $\Sigma$, that we will use later to guide the smoothing of $\Sigma$ into a cooriented branched surface $\overline{B}$.

The set $\mathcal{S}$ has four elements: two of them, denoted $B_1$ and $B_2$ intersect $\partial_1 M$, and the other two, denoted $A_1$ and $A_2$, intersect $\partial_2 M$. We assign coorientations so that the positive side of $A_1$ and $A_2$ lies in one of the balls in the complement of $\Sigma$, say $X_1$, while the positive side of $B_1$ and $B_2$ lies in $X_2$. This convention is illustrated in \Cref{fig: sec 3 coorientations in S^2}, where green (resp. red) indicates the positive (resp. negative) side of a component.

The torus $T$ intersects $D_1$ and $D_2$ in four circles, each isotopic to a meridian of the component $K_0$ of $\mathcal{B}$. We select two of these intersection circles, $c_1 \in T \cap D_1$ and $c_2 \in T \cap D_2$, and assign coorientations to $D_1$, $D_2$, and $T$ so that, after smoothing the $2$-complex $\Sigma$ according to these coorientations, two \emph{cusps} are created on $T$: one along $c_1$ and one along $c_2$.

\begin{defn}\label{def: cusps}
A \emph{cusp} on $T \subset \Sigma$ is a local smoothing of $\Sigma$ in a neighbourhood of a simple closed curve $c \subset T$ such that the cusp direction along $c$ points into $\Sigma \setminus T$.
\end{defn}

\begin{figure}[]
\centering
\includegraphics[width=0.7\textwidth]{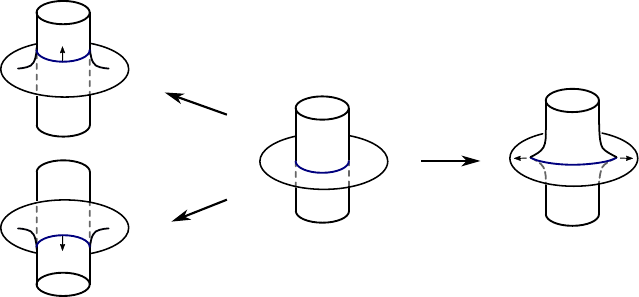}
\caption{The three possible smoothings of $\Sigma$ near a curve of intersection with a disc. The smoothing on the right presents a cusp on $T$, while the two on the left do not. Observe that in the smoothings on the left, the cusp directions point in the torus $T$.}
\label{fig: cusp}
\end{figure}

The right side of \Cref{fig: cusp} shows a cusp. We create two cusps by assigning suitable different coorientations to the two annuli obtained by cutting $T$ along $c_1$ and $c_2$. Our choice of coorientations is illustrated in \Cref{fig: coorientations of torus and discs}.

\begin{figure}[]
    \centering
    \includegraphics[width=0.3\textwidth]{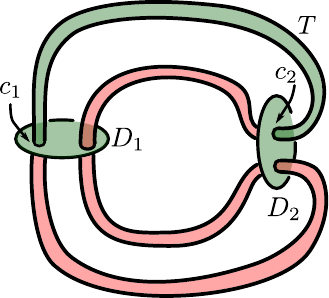}
    \caption{The figure shows how to coorient the discs $D_1$ and $D_2$, and the torus $T$. The curves $c_1$ and $c_2$ cut $T$ into two annuli. After the smoothing induced by the coorientations described in figure, we get two cusps along $c_1$ and $c_2$.}
    \label{fig: coorientations of torus and discs}
\end{figure}

\subsubsection{Desingularisation of the intersections between $\mathcal{S}$ and the discs $D_1$ and $D_2$.}\label{subsubsec: desingularisation}

We have fixed coorientations to all the surfaces composing $\Sigma$. Before smoothing, however, we have to choose a desingularisation of the intersections between the sectors in $\mathcal{S}$ and the discs $D_1$ and $D_2$. In fact, each disc intersects the elements in $\mathcal{S}$ along three arcs that do not respect the local model for branched surfaces. We solve this by sliding the surfaces in $\mathcal{S}$ along the discs in a neighbourhood of each intersection arc, thereby creating two arcs of double points\footnote{In doing so, we also create two triple points, arising from the intersections of the torus $T$, the discs, and the sectors in $S^2$.}. 

For each intersection arc, we consider two possible ways to perform this sliding, and different choices yield different branched surfaces when smoothing. In particular, on $D_1$ there are two arcs of intersection meeting the boundary component $\partial_1 M$. The way in which we modify $\Sigma$ along these arcs is important: before the modification, $\Sigma \cap \partial_1 M$ consists of two meridians and one longitude of $K_1$; after the modification, it becomes a $1$-complex in $\partial_1 M$ that, once smoothed, produces the boundary train track of the branched surface. Different choices of sliding lead to different train tracks, which realise different sets of slopes. The same considerations apply to $D_2$. The two possible choices of sliding, along with their effect on the boundary, are illustrated in \Cref{fig: possible slidings}.

\begin{figure}[]
    \centering
    \includegraphics[width=0.65\textwidth]{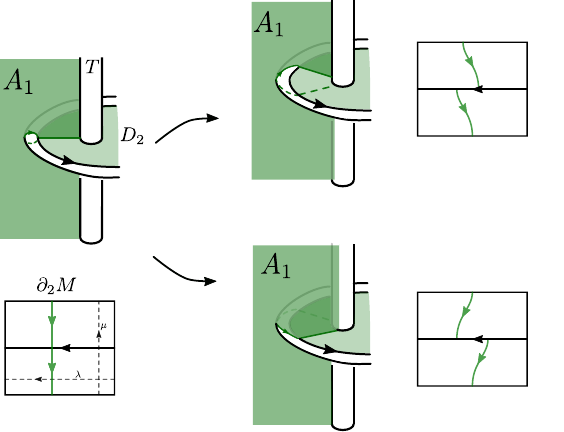}
    \caption{The figure shows a portion of the intersection between $D_2$ and an $A_1$. We illustrate the two possible ways of sliding $A_1$ along $D_2$ and their effect on the intersection with $\partial_2 M$, that we have also endowed with a meridian-longitude basis. The intersection with the boundary of $M$ inherits a coorientation (and thus an orientation) from the coorientation previously fixed on the sectors of $\Sigma$.}
    \label{fig: possible slidings}
\end{figure}

\subsubsection{The branched surface $\overline{B}$.}\label{subsub example: Bbar}
 Finally, we choose a desingularisation of the intersections between the elements in $\mathcal{S}$ and the discs $D_1$ and $D_2$, and smooth the resulting complex according to the fixed coorientations. This produces a cooriented branched surface $\overline{B}$ in $M$, shown in \Cref{fig: final example}, whose main properties are described in the following proposition.

\begin{prop}\label{prop: sec 3 from sigma to B}
The cooriented branched surface $\overline{B}$ has the following properties:
\begin{itemize}
\item $\overline{B}$ has two cusps on $T$.
\item $\overline{B}$ contains no sink discs or half sink discs.
\item $\partial \overline{B} \cap \partial_1 M$ realises every rational slope in  $(-\infty, 1)$. 
\item $\partial \overline{B} \cap \partial_2 M$ realises every rational slope in  $(-1, \infty)$. 
\end{itemize}
\end{prop}

\begin{figure}[]
    \centering
    \includegraphics[width=1\textwidth]{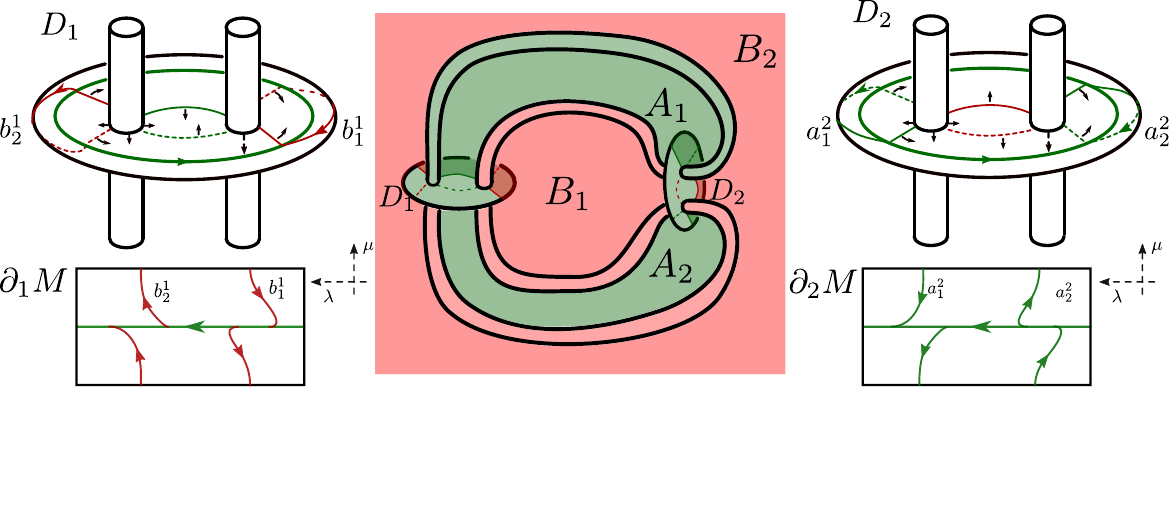}
    \caption{The $2$-complex $\Sigma$ after the desingularisation. By smoothing according to the coorientations we obtain $\overline{B}$. We also illustrate the branch locus of $\overline{B}$ on the discs $D_1$ and $D_2$. Each disc contains five half-disc sectors of $\overline{B}$: four of them are not half sink discs because they have an arc in their boundary with cusp direction points in $T$; the other one is not half sink disc because of our choice of desingularisation. The picture also shows the boundary train tracks of $\overline{B}$.}
    \label{fig: final example}
\end{figure}

\begin{proof}
The presence of the two cusps, one along $c_1$ and one along $c_2$, follows from our choice of coorientations on the discs and on the torus $T$.
We now check that $\overline{B}$ contains no sink discs or half sink discs. A sector of $\overline{B}$ is either an element of $\mathcal{S}$, or lies in one of the discs $D_1$ or $D_2$, or is contained in $T$. The branch locus of $\overline{B}$ consists of three types of curves:

\begin{itemize}
\item \emph{Intersections between sectors in $\mathcal{S}$ and $T$:} the cusp directions along these curves point into $T$.
\item \emph{Intersections between sectors in $\mathcal{S}$ and the discs $D_i$:} the cusp directions along these curves point into the discs $D_i$.
\item \emph{Intersections between the discs $D_i$ and $T$:} these are closed curves. Along $c_1$ and $c_2$, the cusp directions point into the discs; along the other intersections, they point into $T$.
\end{itemize}

The sectors of $\overline{B}$ contained in $\mathcal{S}$ are not half sink discs, since they all intersect $T$ and the cusp directions along these intersections point outward from them. Each disc $D_i$ contains five sectors, as shown in \Cref{fig: final example}. None of these are half sink discs, either because they have a boundary arc contained in a circle of intersection of $T$ and $D_i$ that is not a cusp, or because our choice of desingularisation creates cusp directions pointing outward.
Finally, each sector in $T$ has two boundary arcs of intersection with elements in $\mathcal{S}$. One of these arcs has a cusp direction pointing into the sector and the other has it pointing outward. This is because the intersecting sectors $S^2$ have opposite coorientations by construction (see \Cref{fig: example sectors in torus}). Hence, there are no half sink discs on $T$.

We now turn our attention to the boundary train tracks of $\overline{B}$. We orient them  using the coorientation induced by that of $\overline{B}$. Our choice of desingularisation yields the train tracks depicted in \Cref{fig: final example}. With reference to the notation in \Cref{fig:train track table}:
\begin{itemize}
\item  The train track $\partial \overline{B}\cap \partial_1 M$ is a train track of type $a) + d)$, and hence, by \Cref{lemma: unlinked}, realises every rational slope in $(-\infty, 1)$.
\item The train track $\partial \overline{B}\cap \partial_2 M$ is of type $b)+c)$, and hence realises every rational slope in $(-1, \infty)$.
\end{itemize}
This concludes the proof.
\end{proof}

\begin{figure}[h]
    \centering
    \includegraphics[width=0.18\textwidth]{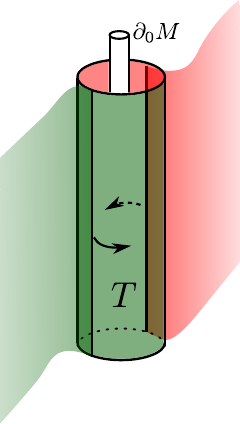}
    \caption{Cusps directions along the intersection between $T$ and the sectors in $S^2$. If the coorientation on the torus is reversed, the cusp directions are reversed too. Thus, in both cases, no sectors in $T$ are half sink discs.}
    \label{fig: example sectors in torus}
\end{figure}
We now describe the exterior of $\overline{B}$ as a sutured manifold. Topologically, the exterior of $\overline{B}$ is homeomorphic to the exterior of $\Sigma$, and hence decomposes into two balls, $X_1$ and $X_2$, together with a component $Y$ homeomorphic to $T\times [0,1]$ containing the boundary component $\partial_0 M$. For the sutured manifold $(Y, \gamma_Y)$, recall that by definition we have

$$
A(\gamma_Y)=Y\cap \partial_v N_{\overline{B}},
$$
and we denote this set by $\partial_v Y$. The presence of the two cusps on $T$ implies that, as a sutured manifold,
$$ (Y,\partial_v Y)=(\nu \partial_0 M, \mathcal{A}_1\cup \mathcal{A}_2), $$
where $\mathcal{A}_1\cup \mathcal{A}_2\subset \partial_0 M \times \{1\}$ are two annuli of slope equal to a meridian of $K_0$. 

To study $X_1$ and $X_2$, we introduce some notation. 
We define $\mathcal{R}^1_+$ as the set of sectors in $\overline{B}$ whose positive side is contained in $X_1$, and $\mathcal{R}^1_-$ as the set of sectors in $\overline{B}$ whose negative side is contained in $X_1$. Moreover, we denote $\mathcal{S}^1_+$ (resp. $\mathcal{S}^1_-$) the set of sectors in $\mathcal{R}^1_+$ (resp. $\mathcal{R}^1_-$) that are in $\mathcal{S}$. Analogously, define $\mathcal{R}^2_{\pm}$ and $\mathcal{S}^2_{\pm}$ using $X_2$. Observe that, by definition, we have that $\mathcal{S}^1_{+}=\mathcal{S}^2_{-}=\{A_1, A_2\}$, and that $\mathcal{S}^1_{-}=\mathcal{S}^2_{+}= \{B_1, B_2\}$.

\begin{lemma}\label{lemma: sec 3 exterior of B}
The sutured manifold $M\setminus {\rm int}(N_{\overline{B}})$ is equal to $X_1\sqcup X_2 \sqcup Y$, where $X_1$ and $X_2$ are product sutured balls and $(Y,\partial_v Y)=(\nu \partial_0 M, \mathcal{A}_1\cup \mathcal{A}_2)$, with $\mathcal{A}_1\cup \mathcal{A}_2\subset \partial_0 M \times \{1\}$ two annuli whose slope is a meridian of $K_0$.
\end{lemma}

\begin{proof}
We already observed that $(Y,\partial_v Y)$ has the desired property, and that $X_1$ and $X_2$ are topologically two balls.
Regarding the sutured manifold $(X_1,\gamma_{X_1})$, we denote 
$$
R(\gamma_{X_1})=\partial_h N_{\overline{B}}\cap X_1=R^1_{+}\sqcup R^1_{-},
$$ where $R^1_{+}$ is the subsurface of $\partial_h N_{\overline{B}}\cap X_1$ along which the coorientation points into $X_1$, and $R^1_{-}$ is the subsurface along which the coorientation points out of $X_1$.
By virtue of \Cref{rem: product ball}, if we prove that $R^1_{+}$ and $R^1_{-}$ are connected, then $(X_1,\gamma_{X_1})$ is a product sutured ball. 
Topologically, the set $R^1_+$ is the union of all the positive sides of sectors in $\mathcal{R}^1_+$. 
Let $F$ be one of such sectors and notice that:
\begin{itemize}
\item If $F$ is contained in $D_1$ or $D_2$, then its positive side can be connected by a path in $R_+$ to the positive side of some element in $\mathcal{S}^1_+$. In fact, the positive side of each of these discs intersects three sectors in $\mathcal{S}$. By our convention on the coorientations of the sectors in $\mathcal{S}$, at least one of them is in $\mathcal{S}^1_+$, and its positive side can be connected to the one of $F$ by an arc passing through the positive side of the disc. 
\item If $F$ is contained in $T$, then its positive side intersects the positive side of some element in $\mathcal{S}^1_+$. In fact, each sector in $T$ intersects two sectors in $\mathcal{S}$, and exactly one of these is in $\mathcal{S}^1_+$.
\end{itemize}
This implies that, in order to prove that $R_+$ is connected, it is enough to prove that the elements in $\mathcal{S}^1_+$, \ie, $A_1$ and $A_2$, can be connected by some arc lying in $R_+$. It is very easy to check that this is true. In fact, both $A_1$ and $A_2$ intersect $\partial_2 M$, and they can be connected with an arc passing through the positive side of $D_2$ (see \Cref{fig: S_+ is connected}). Analogously, $R_-$ is the union of all the negative sides of sectors in $\mathcal{R}^1_-$. With a similar argument, using now that $B_1$ and $B_2$ intersect $\partial_1 M$, and hence can be connected via an arc lying on the negative side of $D_1$, one proves that $R_-$ is connected. So we have that $R_+$ and $R_-$ are connected, and, by \Cref{rem: product ball}, $(X_1, \gamma_{X_1})$ is a product sutured ball. With the same considerations, one gets that $(X_2, \gamma_{X_2})$ is a product sutured ball. 
\end{proof}
\begin{figure}[h]
    \centering
    \includegraphics[width=0.25\textwidth]{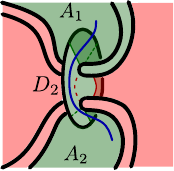}
    \caption{The picture illustrates how the disc $D_2$ allows one to connect the positive sides of $A_1$ and $A_2$ by an arc, shown in blue, contained in $R_+$.}
    \label{fig: S_+ is connected}
\end{figure}

The proof of \Cref{lemma: sec 3 exterior of B} naturally suggests introducing certain graphs, which will also be useful in the more general setting of \Cref{sec: main theorem}.
We define $\mathcal{G}^1_+$ and $\mathcal{G}^1_-$ in the following way.
\begin{itemize}
\item The vertices of $\mathcal{G}^1_{\pm}$ are the elements in $\mathcal{S}^1_{\pm}$.
\item For every intersection of two elements of $\mathcal{S}^1_{\pm}$ with the same connected component of $\partial_1 M \sqcup \partial_2 M$, we add an edge between the corresponding vertices of $\mathcal{G}^1_{\pm}$.
\end{itemize}
Similarly, using $\mathcal{S}^2_{\pm}$, we define $\mathcal{G}^2_+$ and $\mathcal{G}^2_-$. With this notation, we can restate the argument presented in the proof of \Cref{lemma: sec 3 exterior of B} as follows:

\begin{lemma}
For $i=1,2$, If both $\mathcal{G}^i_+$ and $\mathcal{G}^i_-$ are connected, then $(X_i, \gamma_{X_i})$ is a product sutured ball. \qed
\end{lemma}

\subsubsection{The branched surface $B$.}\label{subsub example: B}
Finally, we are ready to define the branched surface $B$ that will be used to construct the desired taut foliations. It is obtained from $\overline{B}$ in a very simple way, namely by setting
$$
B=\overline{B}\setminus{A_1}.
$$
Notice that since all cusp directions along the boundary of $A_1$ point outward from it, $B$ is a well-defined branched surface, \ie, satisfies the local models of \Cref{branched surface}. Also observe that removing $A_1$ modifies the boundary train track; in fact, $\partial B\cap \partial_2 M$ is obtained by removing the branch $a^2_1$, depicted in \Cref{fig: final example}, from $\partial \overline{B}\cap \partial_2 M$. The train track on $\partial_1 M$ is left unchanged, since $A_1$ does not intersect it.

\begin{prop}\label{prop: sec3 from B to B'} 
The branched surface $B$ satisfies the following properties:

\begin{itemize}

\item $B$ has two cusps on $T$.
\item $B$ contains no sink discs or half sink discs.
\item $B \cap \partial_1 M$ realises every rational slope in  $(-\infty, 1)$. 
\item $B \cap \partial_2 M$ realises every rational slope in  $(0, \infty)$. 

\end{itemize}
Moreover, the exterior $M\setminus {\rm int}(N_{B})$, as a sutured manifold, is homeomorphic to $X\cup Y$, where:

\begin{itemize}
\item $X$ is a product sutured ball. 
\item $(Y,\partial_v Y)=(\nu \partial_0 M, \mathcal{A}_1\cup \mathcal{A}_2)$, where $\mathcal{A}_1\cup \mathcal{A}_2\subset  \partial_0 M \times \{1\}$ are two annuli whose slope is a meridian of $K_0$.
\end{itemize}
\end{prop}

\begin{proof}
The first properties are straightforward and they hold for exactly the same reasons presented in the proof of \Cref{prop: sec 3 from sigma to B}. Notice that the train track $\partial B\cap \partial_2 M$ is now of type $b)$, and hence it realises every rational slope in $(0,\infty)$ (see \Cref{fig:train track table}). 
Hence we are left to prove the statement regarding the structure of $M\setminus {\rm int}(N_{B})$ as a sutured manifold.  Removing a sector in $A_1$ from $\overline{B}$ has the effect of gluing the two balls $X_1$ and $X_2$ in the exterior of $\overline{B}$ along a subdisc of their boundary. As a consequence, we have that $M\setminus {\rm int}(N_{B})= X\sqcup Y$ where $X$ is topologically a ball and $(Y, \partial_v Y)$ is homeomorphic to $(\nu \partial_0 M, A_1\cup A_2)$, where $A_1\cup A_2\subset \partial_0 M \times \{1\}$ are two annuli produced by the two meridional cusps on $T$.

Similar to the proof of \Cref{lemma: sec 3 exterior of B}, we denote 
$$
\partial_h N_{B}\cap X=R_{+}\sqcup R_{-},
$$ where $R_{+}$ is the subsurface of $\partial_h N_{B}\cap X$ along which the coorientation points into $X$ and $R_{-}$ is the subsurface along which the coorientation points out of $X$. If we show that these two sets are connected then the statement will follow from \Cref{rem: product ball}. 
We focus our attention on $R_+$. This is homeomorphic to the union of all the positive sides of the sectors of $B$ whose positive side faces $X$. We denote by $\mathcal{S}_+$ the set whose elements are the sectors in $\mathcal{S}$ whose positive sides is contained in $X$. Notice that 
$$
\mathcal{S}_+=\mathcal{S}\setminus\{A_1\}=\{A_2,B_1,B_2\}.
$$
With reasoning analogous to that in the proof of \Cref{lemma: sec 3 exterior of B}, one shows that to prove that $R_+$ is connected, it is sufficient to show that the positive sides of any two elements in $\mathcal{S}_+$ can be connected by a path in $R_+$. This is the case for $B_1$ and $B_2$, since they both intersect $\partial_1 M$, and we can find an arc in $R_+$ that passes through $D_1$ with endpoints in $B_1$ and $B_2$. A similar argument applies to $A_2$ and $B_2$: they both intersect $D_2$ and, since the sector $A_1$ has been removed, we can use $D_2$ to find the desired arc lying in $R_+$ connecting $A_2$ and $B_2$, as shown in \Cref{fig: S is connected}. 

\begin{figure}[h]
    \centering
    \includegraphics[width=0.25\textwidth]{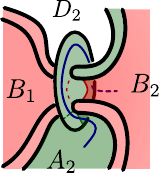}
    \caption{The removal of the sector $A_1$ allows us to connect the positive sides of $A_2$ and $B_2$ with an arc, blue in the picture, lying in $R_+$.}
    \label{fig: S is connected}
\end{figure}
By transitivity, also $B_1$ and $A_2$ can be connected in $R_+$. Hence $R_+$ is connected and, with analogous arguments, one shows that also $R_-$ is connected. This implies that $X$ is a product sutured ball, and concludes the proof.
\end{proof}

By studying the branched surface $B$ further, one shows that $B$ is laminar and proves the following theorem, which relies on a combination of \Cref{boundary train tracks} and \Cref{cusps implies persistently foliar}. We will present these arguments —- in a slightly more general setting —- in \Cref{sec: main theorem}. We believe that presenting them specifically in this example does not provide much additional intuition, though it may still be helpful to keep this example in mind while reading. To avoid repetition, we limit ourselves here to stating only what is necessary to proceed and refer the reader to \Cref{sec: main theorem} for the full details. 

\begin{teo}\label{thm: sec 3 general main thm borr}
Let $s$ be any non-meridional slope on $\partial_0 M$ and let $r_1$ and $r_2$ be rational slopes on $\partial_1 M$ and $\partial_2 M$ respectively. Suppose that the multislope $(r_1, r_2)$ is realised by the boundary train track $\partial B$, and that $r_i$ is not the longitude defined by the disc $D_i$, for $i=1, 2$. Then, there exists a coorientable taut foliation in $M$ intersecting the boundary transversely in a linear foliation of multislope $(s,r_1, r_2)$. In particular, when $s$ is rational, the manifold $M(s, r_1, r_2)$ obtained by filling $M$ along the multislope $(s, r_1, r_2)$ supports a coorientable taut foliation.
\end{teo}

\begin{proof}
See \Cref{thm: general main thm}, which relies on the preliminary results \Cref{lemma: no closed surfaces and annuli}, \Cref{prop: B' is laminar}, and \Cref{lemma: B is taut}. 
\end{proof}

We are now ready to prove \Cref{thm: Borromean}.

\begin{proof}[Proof of \Cref{thm: Borromean}]

The statement regarding $L$-space surgeries follows from \cite[Proposition~9.8]{KMOS07}\footnote{Here the authors work with monopole $L$-spaces, but the monopole Floer/Heegaard Floer correspondence, established in the works of Kutluhan–Lee–Taubes \cite{KLT1, KLT2, KLT3, KLT4, KLT5}, implies that a manifold is a monopole $L$-space if and only if it is a (Heegaard Floer) $L$-space.} or from \cite[Lemma~2.6]{S}, by observing that the Borromean rings is a Brunnian link and that $(1,1,1)$-surgery on it is the Poincaré homology sphere, and therefore an $L$-space. These results, together with the amphichirality of $\mathcal{B}$, imply that if $(r_1,r_2,r_3)\in [1, \infty)^3 \cup (-\infty, -1]^3$, then the $(r_1,r_2,r_3)$-surgery on $\mathcal{B}$ is an $L$-space. 

Since manifolds supporting coorientable taut foliations are not $L$-spaces, to prove the theorem it is enough to construct taut foliations on all the other surgeries. To this end, we use the branched surface $B$ together with \Cref{thm: sec 3 general main thm borr}. In fact, by \Cref{prop: sec3 from B to B'}, the boundary train tracks of $B$ on $\partial_1 M$ and $\partial_2 M$ realise all rational multislopes in $(-\infty,1)\times(0,\infty)$. Therefore, by applying  \Cref{thm: general main thm}, we deduce that all surgeries on $\mathcal{B}$ with coefficients in $(-\infty,\infty)\times\big((-\infty,1)\setminus\{0\}\big)\times(0,\infty)$ support coorientable taut foliations. Since $\mathcal{B}$ is amphichiral and any permutation of its components can be realised by an isotopy of the link, one concludes the proof in the case where at most one surgery coefficient is zero.

When at least two surgery coefficients are zero, we may assume by symmetry that we are studying surgeries of type $(r,0,0)$, for some rational $r$. In this case, the branched surface obtained by splitting $B$ in a neighbourhood of $\partial_1 M$ and $\partial_2 M$, and gluing meridional discs as in the proof of \Cref{thm: general main thm}, is laminar in the resulting manifold. This implies that the manifold is irreducible, and since it has $b_1>0$, we conclude by \cite{Gabai} that it admits a coorientable taut foliation.
\end{proof}

We also point out the following immediate corollary of the proof of \Cref{thm: Borromean}, what will be useful in \Cref{subsec: Bing doubles}.

\begin{cor}\label{lemma: fol Bing}
Let $l\in \Z$ and let $U$ be a neighbourhood of $\frac{1}{l}$ in $\overline{\Q}$. Then, for every pair of rational numbers $r_1 \neq 0$ and $r_2 \neq 0$, there exist $r \in U$ and a coorientable taut foliation in the exterior of $\mathcal{B}$ that intersects the boundary tori in circles of slopes $r$, $r_1$, and $r_2$, respectively.\qed
\end{cor}

\subsection{The link $L8n5$.}\label{subsec: the link L8n5}
We now focus on the study of the three-component link $L8n5$. In doing this, we will present a variation of the ideas of the previous section that will appear also in the proof of the main theorem of the paper, in \Cref{sec: main theorem}. Also, in \Cref{sec: main theorem} we will need some of the resuts proved here.

The link $L8n5$ is depicted in \Cref{fig: borr twists}. We denote its exterior by $M'$, with boundary component $\partial_i M'$ associated to the link component $K'_i$, for $i=0,1,2$.

\begin{figure}[h]
    \centering
    \includegraphics[width=0.4\textwidth]{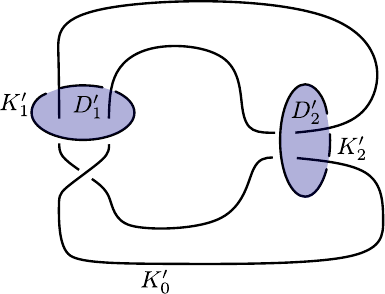}
    \caption{The link $L8n5$, together with the twice-punctured discs bounded by $K'_1$ and $K'_2$.}
    \label{fig: borr twists}
\end{figure}

The basic observation is that the exterior of $L8n5$ and the exterior of the Borromean rings are very much related. They are not diffeomorphic, but they are related by a \emph{mutation}, \ie, by the operation of cutting along a surface and regluing with a (possibly different) diffeomorphism. In fact, we obtain $M'$ by cutting $M$ along the twice punctured disc $D_1$ and gluing back with a $\pi$-rotation. In particular, we can define a $2$-complex $\Sigma'$ in the exterior of $L8n5$ by taking the image of the $2$-complex $\Sigma$ in the exterior $M$ of the Borromean rings after the following procedure:

\begin{itemize}
\item  Coorient $D_1\subset M$ as in \Cref{fig: coorientations of torus and discs} and use the coorientation to identify a neighbourhood of $D_1$ with $\mathbb{D}_*^2\times [-2,2]$. Here, $\mathbb{D}_*^2$ denotes the unit disc in $\mathbb{C}$ with two small discs of the same radius removed around the points $(-\frac{1}{2},0)$ and $(\frac{1}{2},0)$. We identify $D_1$ with $\mathbb{D}_*^2\times \{0\}$ and suppose that each slice $\mathbb{D}_*^2\times \{t\}$ is properly embedded in $M$. Moreover, we suppose that the intersection of the projection sphere with this tubular neighbourhood is contained in $\mathbb{R}\times [-2,2]\subset \mathbb{C}\times [-2,2]$.
\item Call $N=M\setminus{\mathbb{D}_*^2\times (-1,1)}$ and apply the self-diffeomorphism of $N$ that is a rotation of angle $(t+2)\pi$ on each slice $\mathbb{D}_*^2\times \{t\}$, for $t\in [-2,-1]$, and is the identity otherwise.
\item Glue back $\mathbb{D}_*^2\times \{-1\}$ to $\mathbb{D}_*^2\times \{1\}$ with the identity.
\end{itemize}

Roughly speaking, the $2$-complex $\Sigma'$ differs from $\Sigma$ only in a neighbourhood of $D_1$ and is obtained by $\Sigma$ by twisting it in a neighbourhood of $D_1$, so to follow the half-twist that makes $L8n5$ differ from the Borromean rings.

The $2$-complex $\Sigma'$ consists of: a torus $T'$, parallel to the component $K'_0$; the twice-punctured discs $D'_1$ and $D'_2$; the set $\mathcal{S}'$ containing four regions $A'_1,A'_2, B'_1,$ and $B'_2$, that are the images of the respective sectors in $\mathcal{S}$ of $\Sigma$. A partial picture of $\Sigma'$ in a neighbourhood of $D'_1$ is depicted in the top-left part of \Cref{fig: branched twist}.

We now have to make $\Sigma'$ into a branched surface. To do this, we first coorient the sectors of $\Sigma'$ with the coorientation induced by $\Sigma$. Then, we desingularise the intersections between $D'_2$ and the sectors in $S^2$ exactly as we did on $D_2$ in \Cref{fig: final example}, while on $D'_2$ we choose the desingularisation described in \Cref{fig: branched twist}. In this way we obtain a branched surface $\overline{B}'$, and, as we did with the case of the Borromean rings, we remove the sector $A'_1$, to get a branched surface $B'$. Notice that, by construction, the exterior of $B'$ in $M'$ is homeomorphic, as a sutured manifold, to the exterior of $B$ in $M$ and hence the proof of the following proposition is greatly simplified.

\begin{figure}[h]
    \centering
    \includegraphics[width=0.75\textwidth]{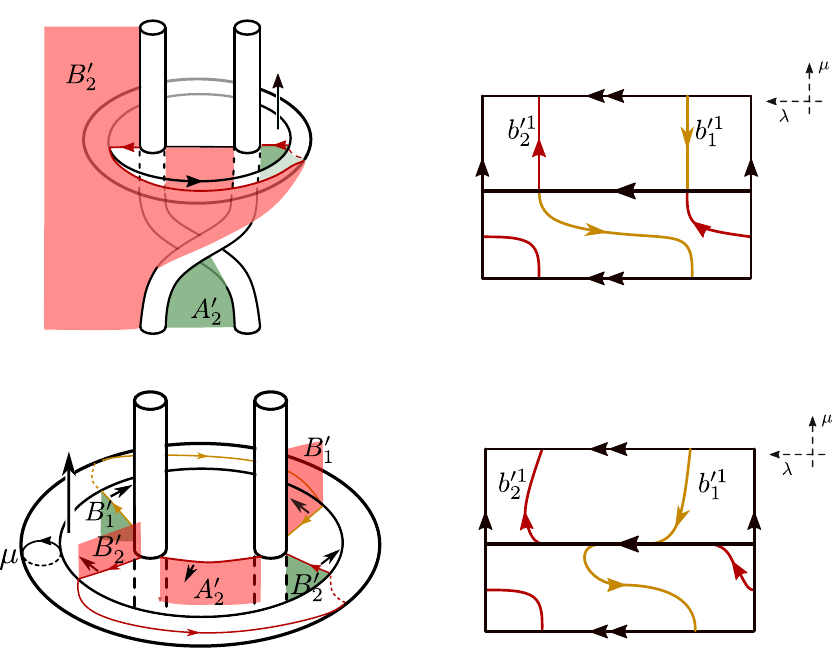}
    \caption{Top: a portion of the $2$-complex $\Sigma' \setminus A'_1$ in a neighbourhood of the disc $D'_1$, together with its intersection with $\partial_1 M'$. For simplicity, we do not draw the sector $B'_1$, but we show its intersection with the boundary in yellow. Bottom: the branched surface $B'$ and its boundary train track on $\partial_1 M'$.}
    \label{fig: branched twist}
\end{figure}

\begin{prop}\label{prop: sec3 from B to B' twist} 
The branched surface $B'$ satisfies the following properties:

\begin{itemize}

\item $B'$ has two cusps on $T$.
\item $B'$ contains no sink discs or half sink discs.
\item $B' \cap \partial_1 M'$ realises every rational slope in  $(-\infty, 1)$. 
\item $B' \cap \partial_2 M'$ realises every rational slope in  $(0, \infty)$. 

\end{itemize}
Moreover, the exterior $M'\setminus {\rm int}(N_{B'})$, as a sutured manifold, is homeomorphic to $X'\cup Y'$, where:

\begin{itemize}
\item $X$ is a product sutured ball. 
\item $(Y',\partial_v Y')=(\nu \partial_0 M', \mathcal{A}_1\cup \mathcal{A}_2)$, where $\mathcal{A}_1\cup \mathcal{A}_2\subset  \partial_0 M' \times \{1\}$ are two annuli whose slope is a meridian of $K'_0$.
\end{itemize}
\end{prop}

\begin{proof}
The presence of the two cusps on $T$ is a consequence on the choice of coorientations. Since branched surface $B'$, outside of a neighbourhood of $D'_1$, can be identified with $B$, we only need to check that no half sink discs in $D'_1$ are created when smoothing near $D'_1$. This smoothing is illustrated in the bottom-left part of \Cref{fig: branched twist}, and one can directly check that there are no half sink discs. This proves the first two items of the proposition. \Cref{fig: branched twist} also shows the boundary train track on $\partial_1 M'$, which is of type $a)+d)$, and hence realises every rational slopes in $(-\infty, 1)$. The boundary train track on $\partial_2 M'$ coincide with the boundary train track of $B$ on $\partial_2 M$ and we already showed in \Cref{prop: sec3 from B to B'}, that it realises every rational slope in $(0,\infty)$.

Finally, the statement about the exterior of $B'$ follows from the fact that it is homeomorphic as a sutured manifold to the exterior of $B$, for which they hold by virtue of \Cref{prop: sec3 from B to B'}.
\end{proof}

We can now state the analogous of \Cref{thm: sec 3 general main thm borr} for the manifold $M'$. We again refer the reader to \Cref{thm: general main thm} for its proof.

\begin{teo}\label{thm: sec 3 general main thm borr twist}
Let $s$ be any non-meridional slope on $\partial_0 M'$ and let $r_1$ and $r_2$ be rational slopes on $\partial_1 M'$ and $\partial_2 M'$ respectively. Suppose that the multislope $(r_1, r_2)$ is realised by $B'$, and that $r_i$ is not the longitude defined by the disc $D'_i$, for $i=1, 2$. Then, there exists a coorientable taut foliation in $M'$ intersecting the boundary transversely in a linear foliation of multislope $(s,r_1, r_2)$. In particular, when $s$ is rational, the manifold $M'(s, r_1, r_2)$ obtained by filling $M'$ along the multislope $(s, r_1, r_2)$ supports a coorientable taut foliation.
\end{teo}

We conclude this section by proving the following result.

\begin{prop}\label{prop: foliations on L8n5}
Let $(s,r_1,r_2)$ be a rational multislope on $L8n5$, where $s\in\Q$, $r_1\in (-\infty,1)\setminus\{0\}$, and $r_2\in (0,\infty)$. Then the $(s, r_1,r_2)$-surgery on $L8n5$ supports a coorientable taut foliation.
\end{prop}
\begin{proof}
It is a consequence of \Cref{thm: sec 3 general main thm borr twist} and the fact that, by \Cref{prop: sec3 from B to B' twist}, the boundary train track $\partial B'\cap \partial_1 M'$ realises every rational slope in $(-\infty,1)$, and $\partial B'\cap \partial_2 M'$ realises every rational slope in $(0,\infty)$.
\end{proof}

\subsection{Bing doubles.}\label{subsec: Bing doubles}
We conclude the section by presenting some applications of the previous results to Bing doubles of knots and links. We denote the solid torus $\matD^2\times S^1$ by $V$, and recall that a Bing double of a knot $K$ is a satellite link of $K$ whose companion is the link $P\subset V$ depicted in  \Cref{fig: Bing Pattern}. More precisely, fix a closed tubular neighbourhood $\nu K$ of $K$ and an orientation preserving diffeomorphism $\phi:\mathbb{D}^2\times S^1\to \nu K$. Then, $\phi(P)$ defines a two-component link with linking number zero in $S^3$, that by definition is a Bing double of $K$. Observe that changing the diffeomorphism $\phi$ can change the resulting link.

\begin{figure}[h]
    \centering
    \includegraphics[width=0.3\textwidth]{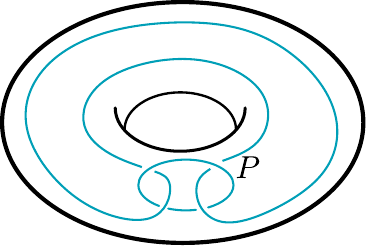}
    \caption{The pattern used to define Bing doubles. The exterior of $P$ in the solid torus coincide with the exterior of the Borromean rings in $S^3$.}
    \label{fig: Bing Pattern}
\end{figure}
We indicate by $\mu_V$ and $\lambda_V$ the oriented meridian and longitude of $V$. Observe that, if we call $K_0$ the core of the exterior of the standard solid torus $V$ in $S^3$, then $K_0\cup P$ is the Borromean rings $\mathcal{B}$. Moreover, 
$$
(\mu_V, \lambda_V)=(\lambda_{K_0}, \mu_{K_0}),
$$ where $\mu_{K_0}$ and $\lambda_{K_0}$ are the canonical meridian and longitude of $K_0$.

\begin{teo}\label{thm: Bing doubles}
Let $L$ be a fibered link with positive genus or any non-trivial knot and let $L'$ denote the link obtained by replacing each component of $L$ with one of its Bing doubles. Then every rational homology sphere obtained by doing surgery on each component of $\mathcal{L}'$ along a non-meridional slope supports a coorientable taut foliation.
\end{teo}

\begin{proof}
The proof is analogous to the proof of \cite[Theorem~1.10]{Stwobridge} where the same result was proved for Whitehead doubles and some more general satellite knots and links. 

We first analyse the case where $L$ is a non-trivial knot, and we denote it by $K$. Notice that the surgeries on the Bing double $\phi(P)$ of $K$ are parametrised by the rational multislopes $(r_1, r_2)$ on $P$, where we are fixing the canonical meridian-longitude bases of its components, seen as knots in $S^3$. Also observe that, since the $\phi(P)$ has linking number zero, the surgery associated to $(r_1, r_2)$ is a rational homology sphere if and only if $r_1\ne 0$ and $r_2\ne 0$. For this reason, we will assume that both $r_1$ and $r_2$ are non-zero.

We fix the canonical meridian-longitude basis $(\mu_K, \lambda_K)$ for the knot $K$ and we use it to identify rational slopes on $K$ with $\overline{\Q}=\matQ\cup \{\infty\}$. Keeping the notation introduced at the beginning of the section, the exterior of the Bing double of $K$ is obtained as 
$$
E_K\cup_{\varphi}M,
$$
where $E_K$ and $M$ are the exteriors of $K$ and $\mathcal{B}$ respectively, and $\varphi$ is the restriction of $\phi$ to $\partial V$.

In order to construct foliations on surgeries on the Bing double of $K$, our strategy is to find, for each pair of rationals $r_1\ne 0, r_2\ne 0$, a taut foliation $\mathcal{F}_K$ on $E_K$, and a taut foliation on $\mathcal{F}_{\mathcal
B}$ on $M$ such that:
\begin{itemize}
\item both foliations are transverse to the boundary, and the map $\varphi$ glues leaves of $\mathcal{F}_K$ to leaves of $\mathcal{F}_{\mathcal
B}$.
\item $\mathcal{F}_{\mathcal
B}$ intersects the boundary components associated to $P$ in a foliation by circles of slope $r_1$ and $r_2$ respectively.
\end{itemize}

The map $\varphi$
identifies the meridian $\mu_V$ of $V$ with $\mu_K$, and the longitude $\lambda_V$ of $V$ with some longitude of $K$, \ie,
\begin{align*}
    &\varphi(\lambda_{K_0})=\varphi(\mu_V)=\mu_K\\
    &\varphi(\mu_{K_0})=\varphi(\lambda_V)=l\mu_K+\lambda_K
\end{align*}
for some integer $l\in \matZ$.
Given two coprime integers $p,q$ the map $\varphi$ satisfies
$$
p\mu_K+q\lambda_K=\varphi\big{(}(p-ql)\lambda_{K_0}+q\mu_{K_0}\big{)}
$$
and therefore it acts on  slopes by identifying the slope $\frac{p}{q}$ on $K$ with the slope $(\frac{p}{q}-l)^{-1}$ on $K_0$.

By \cite[Theorem~1.1]{LR}, if $K$ is a non-trivial knot then there exists an interval $(-a, b)$, where $a,b>0$, such that for every rational slope $s\in (-a, b)$ there is a coorientable taut foliation on the exterior of $K$ intersecting the boundary torus transversely in a collection of circles of slope $s$. The slope $0$ on $K$ corresponds, via the identification given by $\varphi$, to the slope $-\frac{1}{l}$ on $K_0$. Hence, the interval $(-a,b)$ is identified with a neighbourhood $U$ of $-\frac{1}{l}\subset \overline{\matQ}$. 

It follows from \Cref{lemma: fol Bing} that for every integer $l \in \mathbb{Z}$, every neighbourhood $U$ of $-\frac{1}{l}$ in $\overline{\mathbb{Q}}$, and every rational multislope $(r_1, r_2)$ on $P$ with $r_1 \neq 0$ and $r_2 \neq 0$, there exist a rational number $r \in U$ and a coorientable taut foliation $\mathcal{F}_{\mathcal{B}}$ in $M$ that intersects the boundary tori in circles of slopes $r$, $r_1$, and $r_2$, respectively. Since $r$ lies in the neighbourhood $U$, by \cite{LR} we can find a taut foliation $\mathcal{F}_K$ in $E_K$ whose leaves can be glued to those of $\mathcal{F}_{\mathcal{B}}$ via $\varphi$, yielding the desired result.

When $L=K_1\sqcup \dots \sqcup K_d$ is a fibered link with multiple components and positive genus we can proceed in an analogous way. Let $S$ denote the fiber surface for $\mathcal{L}$. By intersecting $S$ with the boundaries of tubular neighbourhoods of the knots $K_1,\dots, K_d$ we obtain longitudes $\lambda^S_1,\dots \lambda^S_d$. We use them to define meridian-longitude bases for the components of $L$ and to identify rational slopes on $L$ with $\overline{\matQ}^d$.
By \cite[Theorem~1.1]{KR1} there exists, for every rational multislope $(r_1,\dots, r_d)$ in a neighbourhood of $0\in \overline{\matQ}^d$,  a coorientable taut foliation in the exterior of $L$ intersecting the boundary tori transversely in parallel curves of slopes $r_1,\dots, r_d$ respectively. The result now follows by applying to each component of $L$ the same reasoning as in the previous case, where we never made use of the fact that $\lambda_K$ was the canonical longitude of $K$.
\end{proof}

\section{Proof of the main theorem}\label{sec: main theorem}
This section is devoted to the proof of the main theorem of the paper. Recall from \Cref{sec: intro} that we described how to associate two weighted graphs, $\mathcal{G}_g$ and $\mathcal{G}_r$, to a knot diagram $D$.

\begin{teo}\label{thm: main theorem}
Let $K$ be a knot in $S^3$ with a diagram $D$, and suppose that $D$ has more than one twist region. Assume that:
\begin{itemize}
\item All weights of the graphs $\mathcal{G}_r$ and $\mathcal{G}_g$ are greater than one, and at least one weight is greater than two.
\item The graphs $\mathcal{G}_r$ and $\mathcal{G}_g$ are connected.
\end{itemize}
Then $K$ is persistently foliar. In particular, every non-trivial surgery on $K$ admits a coorientable taut foliation.
\end{teo}
\begin{proof}
When the graph $\mathcal{G}_r \cup  \mathcal{G}_g$ has at most four vertices, the result follows from \Cref{prop: at most four vertices}. When it has more than four vertices, from \Cref{thm: main thm parte 1}.
\end{proof}

The above proof is divided in two cases due to a technical reason; see \Cref{lemma: settore che non interseca} and the discussion preceding it.

We record here the following lemma, which provides an alternative way to verify the second condition of \Cref{thm: main theorem}.
\begin{lemma}\label{lemma: grafi connessi sse contrattili}
The following facts are equivalent:
\begin{enumerate}
\item The graphs $\mathcal{G}_g$ and  $\mathcal{G}_r$ are connected.
\item The graph $\mathcal{G}_g$ is a tree.\item The graph $\mathcal{G}_r$ is a tree.
\end{enumerate}
\end{lemma}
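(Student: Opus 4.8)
The plan is to relate the graphs $\mathcal{G}_g$ and $\mathcal{G}_r$ to a common combinatorial object coming from the graph $\Gamma$ on $S^2$, and to use an Euler-characteristic argument. Recall that $\Gamma$ is a $4$-valent-after-collapse graph on $S^2$, with a checkerboard colouring of its complementary regions into green and red faces, and that $\mathcal{G}_g$ (resp. $\mathcal{G}_r$) has one vertex per green (resp. red) region and one edge per red arc of $\Gamma$, connecting the two regions of the appropriate colour adjacent to that arc. In particular $\mathcal{G}_g$ and $\mathcal{G}_r$ have the \emph{same} edge set, namely the set of red arcs of $\Gamma$; write $V_g, V_r, E$ for the number of green regions, red regions, and red arcs respectively, so that both graphs have $E$ edges.

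First I would record the Euler formula for $\Gamma'$ (the graph obtained by collapsing red arcs, all of whose vertices are $4$-valent). Each $4$-valent vertex of $\Gamma'$ corresponds to a red arc of $\Gamma$, so $\Gamma'$ has $E$ vertices; being $4$-valent it has $2E$ edges; and its faces are exactly the $V_g + V_r$ checkerboard regions. Euler's formula on $S^2$ gives $E - 2E + (V_g + V_r) = 2$, i.e. $V_g + V_r = E + 2$. On the other hand, by Assumption~\ref{assum 2} we may assume there are no bigon cycles, but more importantly I want to observe that $\mathcal{G}_g$ and $\mathcal{G}_r$ are (abstract, possibly with multiple edges) \emph{planar dual} graphs: embedding $\mathcal{G}_g$ in $S^2$ by placing each green-region vertex inside its region and drawing each edge transverse to its red arc, the faces of this embedded graph are exactly the red regions, so $\mathcal{G}_r$ is the planar dual of $\mathcal{G}_g$ (and vice versa). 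This duality is the structural heart of the argument.

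Now the equivalence is a standard fact about planar dual graphs combined with the Euler count. A connected graph $G$ embedded in $S^2$ with $v$ vertices, $e$ edges and $f$ faces satisfies $v - e + f = 2$; $G$ is a tree iff $e = v - 1$ iff $f = 1$; and in that case its dual has one vertex, $e$ edges and $v$ faces, so the dual is connected with $e = (\text{number of faces}) - 1$, hence also a tree. More directly: $\mathcal{G}_g$ connected $\Leftrightarrow$ its embedding in $S^2$ has a single... no — I would instead argue that $\mathcal{G}_g$ connected forces, via $V_g - E + V_r \geq 2$ with equality iff $\mathcal{G}_g$ is connected and $\mathcal{G}_r$ is connected, together with the identity $V_g + V_r = E+2$ above, that $E = V_g - 1$, i.e. $\mathcal{G}_g$ is a tree, and symmetrically. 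Concretely: if $\mathcal{G}_g$ has $c_g$ connected components then $E \geq V_g - c_g$; embedding $\mathcal{G}_g$ in $S^2$, the number of its faces is $V_r$, and Euler gives $V_g - E + V_r = 1 + c_g'$ where $c_g'$ counts components, so from $V_g + V_r = E + 2$ we get $c_g' = 1$, forcing $\mathcal{G}_g$ connected; and $c_g' = 1$ with the same Euler identity forces $E = V_g + V_r - 2 = (V_g - 1) + (V_r - 1)$, but each component of a graph with $c$ components and no... I would clean this up by: $\mathcal{G}_g$ connected $\Rightarrow V_r = $ number of faces of the $S^2$-embedding, Euler gives $E = V_g + V_r - 2$; if moreover $\mathcal{G}_g$ had a cycle it would have at least two faces counted by $\mathcal{G}_r$ in a way still consistent — so the crucial extra input is that a connected graph on $S^2$ with every face... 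Actually the cleanest route: $\mathcal{G}_g$ is a tree $\Leftrightarrow$ $\mathcal{G}_g$ is connected and $E = V_g - 1$ $\Leftrightarrow$ (by $V_g + V_r = E + 2$) $\mathcal{G}_g$ connected and $V_r = 1$, but $V_r$ is a fixed number $\geq 1$, so this shows $(2)$ fails unless $V_r = 1$. This forces me to reexamine — and indeed I expect the statement is really proved by showing all three conditions are equivalent to "$\Gamma$ is connected" or to "$\mathcal{G}_g \cup \mathcal{G}_r$ is connected", with trees appearing because $S^2$ has Euler characteristic $2$ and the two graphs partition the topology of $S^2$ between them.

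\textbf{Main obstacle.} The genuinely delicate point is the duality/planarity bookkeeping: one must verify carefully that $\mathcal{G}_g$ and $\mathcal{G}_r$ are honest planar dual graphs (including correct handling of loops, parallel edges, and the one-crossing twist regions where the red arc can be placed in two ways), and that the faces of the natural $S^2$-embedding of $\mathcal{G}_g$ are in bijection with the vertices of $\mathcal{G}_r$. Once that bijection and the Euler identity $V_g + V_r = E + 2$ are in hand, the equivalence $(1) \Leftrightarrow (2) \Leftrightarrow (3)$ follows purely formally, since for a planar graph $G$ with dual $G^{*}$ one has: $G$ connected and a tree $\Leftrightarrow$ $G$ has one face $\Leftrightarrow$ $G^{*}$ has one vertex $\Leftrightarrow$ ... — so I would organize the final write-up as (i) establish the duality, (ii) establish the Euler identity, (iii) deduce that $\mathcal{G}_g$ connected iff the embedding has $E - V_g + 1$ independent cycles $= 0$ iff $V_r - 1 + (\text{something}) $, converting everything into the single numerical equality $E = V_g - 1$ and, by symmetry under $g \leftrightarrow r$, into $E = V_r - 1$; these two equalities are equivalent to each other given $V_g + V_r = E+2$, closing the loop.
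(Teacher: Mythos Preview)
Your proposal never converges to an actual proof; it is a sequence of false starts that you yourself repeatedly retract. The central difficulty is that the relationship you posit between $\mathcal{G}_g$ and $\mathcal{G}_r$ --- that they are standard planar dual graphs sharing a common edge set --- is not the correct one, and this is precisely why your Euler computations keep producing absurdities (e.g.\ ``$\mathcal{G}_g$ is a tree iff $V_r = 1$''). Indeed, ordinary planar duality would say: $\mathcal{G}_g$ is a tree iff $\mathcal{G}_g$ has a single face iff its dual has a single vertex, which is plainly not the content of the lemma. You notice the numbers do not work but never diagnose why, and the writeup ends with the obstacle identified but not overcome.

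The paper's proof is a two-line topological argument that sidesteps all of this bookkeeping. Replace each red arc of $\Gamma$ by a band so as to obtain a decomposition of $S^2$ into two compact subsurfaces $\Sigma_g$ and $\Sigma_r$ meeting along their common boundary, with $\Sigma_g$ homotopy equivalent to $\mathcal{G}_g$ and $\Sigma_r$ homotopy equivalent to $\mathcal{G}_r$. Now use the elementary fact about complementary subsurfaces-with-boundary of $S^2$: $\Sigma_g$ is a disc $\Leftrightarrow$ $\Sigma_r$ is a disc $\Leftrightarrow$ both are connected (since $\chi(\Sigma_g)+\chi(\Sigma_r)=\chi(S^2)=2$ and a connected planar surface with nonempty boundary has $\chi\le 1$, with equality exactly for the disc). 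The genuine ``duality'' between the two graphs is therefore not the vertex--face duality you were reaching for, but the complementarity of their thickenings inside $S^2$; once phrased this way there is no face-counting to do at all. Your identity $V_g+V_r=E+2$ is correct and is nothing other than $\chi(\Sigma_g)+\chi(\Sigma_r)=2$ in disguise, but you need the surface picture, not graph duality, to turn it into the lemma.
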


\begin{proof}
Replacing each blue arc in the graph $\Gamma$, defined in \Cref{sec: intro}, with a band in $S^2$ (as illustrated in \Cref{fig: Sigma}), yields a decomposition of $S^2$ into two subsurfaces, $\Sigma_g$ and $\Sigma_r$, which have the same homotopy type as $\mathcal{G}_g$ and $\mathcal{G}_r$, respectively. Since $\Sigma_g$ is a disc $\iff$ $\Sigma_{r}$ is a disc $\iff$ they are both connected, the claim follows.
\end{proof}

\begin{figure}[]
    \centering
    \includegraphics[width=0.4\textwidth]{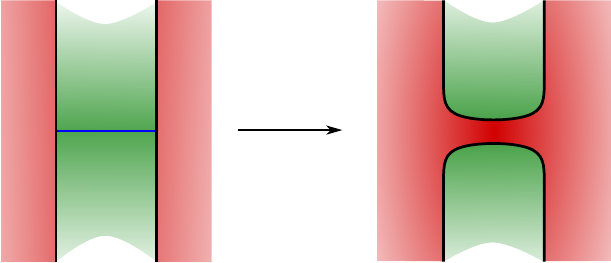}
    \caption{How to obtain $\Sigma_g$ and $\Sigma_r$ from $\Gamma$.}
    \label{fig: Sigma}
\end{figure}

We begin by fixing some notation.
For a link $L = K_0 \sqcup K_1 \sqcup \cdots \sqcup K_n$ in $S^3$, we denote by $S^3_{r_0, \dots, r_n}(L)$ the manifold obtained by performing Dehn surgery on each component of $L$, where $r_i$ is the surgery coefficient on the knot $K_i$.

The proof of \Cref{thm: main theorem} proceeds as follows. Starting from a diagram $D$ of a knot $K$, we define a link $L=K_0\sqcup \cdots \sqcup K_n$ in $S^3$ such that, for some $(r_1, \dots, r_n)\in \overline{\Q}^n$, the manifold $S^3_{\bullet,r_1,\dots, r_n}(L)$ is diffeomorphic to the exterior of $K$, where $\bullet$ indicates that an open tubular neighbourhood of $K_0$ has been removed. We then turn to the link $L$ itself and construct foliations on its surgeries.

The link $L$ is obtained by replacing each twist region in $D$ with a tangle in the following way. We remove all crossings in the twist region  if it contains an even number of them, and all but one otherwise. Then, we encircle the two strands with an unknotted circle, called a \emph{crossing circle}. The knot $K$ can be recovered from $L$ by performing $\tfrac{1}{k}$-surgery on each crossing circle, for appropriate integers $k$. A link $L$ obtained in this way is known in the literature as a fully augmented link \cite{Lackenby, Purcell}. See \Cref{fig: L from K} for an example.

\begin{figure}[h]
    \centering
    \includegraphics[width=0.6\textwidth]{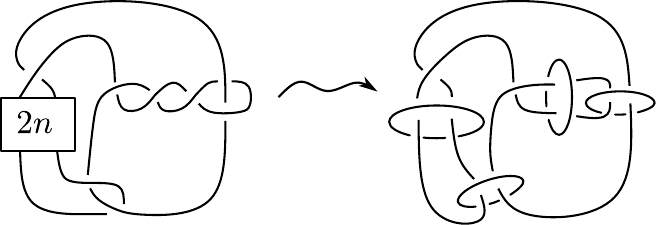}
    \caption{How the link $L$ is obtained from a diagram of $K$.}
    \label{fig: L from K}
\end{figure}

We denote the components of the link $L$ by $K_0, K_1,\dots, K_n$, where $K_1, \dots, K_n$ are the crossing circles. 
Each crossing circle $K_i$ bounds a twice-punctured disc $D_i$ in the exterior of $L$, which intersects the projection sphere transversely in three arcs; see \Cref{fig: horizontal disk}.

\begin{figure}[h]
    \centering
    \includegraphics[width=0.24\textwidth]{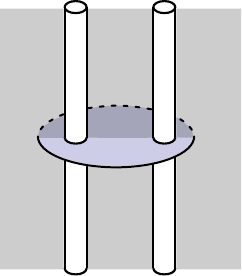}
    \caption{The twice-punctured disc bounded by the crossing circle. The projection sphere, shaded in gray, intersects it in three arcs.}
    \label{fig: horizontal disk}
\end{figure}
By construction, each crossing circle corresponds to a twist region of the diagram $D$. Moreover, if the twist region associated to the crossing circle $K_i$ contains $2k_i$ or $2k_i+1$ left-handed crossings, with $k_i\geq 0$, we label $K_i$ with the coefficient $\frac{1}{k_i}$. Analogously, if the twist region contains $-2k_i$ or $-(2k_i-1)$ right-handed crossings, with $k_i\leq 0$, we label $K_i$ with $\frac{1}{k_i}$. In this way, we have 

$$
S^3_{\bullet}(K)=S^3_{\bullet, \frac{1}{k_1},\dots, \frac{1}{k_n}}(L).
$$
Our goal now is to construct coorientable taut foliations on surgeries on $L$. More precisely, for every non-meridional slope $s$ on $K_0$, we aim to produce a coorientable taut foliation in the exterior of $L$ that intersects the boundary transversely in parallel curves of slope $s, \frac{1}{k_1}, \dots, \frac{1}{k_n}$, respectively. This will imply that $K$ is persistently foliar. The link $L$ enjoys all the properties that we used when studying the Borromean rings and the link $L8n5$ in the previous section, and we will proceed essentially in the same way.

We first define, in \Cref{subsec: the branched surface Bbar}, a branched surface $\overline{B}$ in the exterior of $L$ that has two \emph{meridional cusps} (as in \Cref{def: cusps}) on a torus parallel to $\partial \nu K_0$, and that intersects the boundary components corresponding to $K_1, \dots, K_n$ in a train track realising the rational multislope $(\frac{1}{k_1}, \dots, \frac{1}{k_n})$. Then, in  \Cref{subsec: more than 4 elements} we modify $\overline{B}$ to produce a branched surface $B$ suitable for constructing the desired foliations.

We first observe that we have already proven \Cref{thm: main theorem} in the case when there are at most $4$ vertices in $\mathcal{G}_g\cup \mathcal{G}_r$. Recall that, by definition, the vertices of $\mathcal{G}_g \cup \mathcal{G}_r$ coincide with the complementary regions of the graph $\Gamma$, constructed as in the left side of \Cref{fig: Gamma' intro}. The next lemma shows that if $D$ is a diagram such that the corresponding graph $\Gamma$ has at most $4$ complementary regions, then the fully augmented link $L$ associated to $D$ has at most $2$ crossing circles. 

Recall, from \Cref{sec: intro}, the graph $\Gamma'$, obtained by collapsing the blue arcs in $\Gamma$ as in the right side of \Cref{fig: Gamma' intro}, and notice that the number of vertices in $\Gamma'$ coincides with the number of crossing circles of $L$.

\begin{lemma}
Suppose that $\Gamma$ has at most $4$ complementary regions. Then the graph $\Gamma'$ has at most two vertices.
\end{lemma}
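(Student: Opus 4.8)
The plan is to deduce the bound directly from Euler's formula for graphs embedded in the $2$-sphere. First I would collect the two facts about $\Gamma'$ that are already in hand: it is a graph embedded in $S^2$ every vertex of which has valence $4$ (this is exactly how $\Gamma'$ was produced in Section~\ref{sec: intro}, by collapsing each red arc of $\Gamma$ to a point), and, via the natural identification of the complementary regions of $\Gamma$ with those of $\Gamma'$ used in Step~1 of that construction, the number of complementary regions of $\Gamma'$ equals the number of complementary regions of $\Gamma$, hence is at most $4$ by hypothesis. If $\Gamma'$ has no vertices there is nothing to prove, so I would assume it is non-empty.

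Then the argument is a short count. Let $V$, $E$ and $F$ denote the numbers of vertices, edges and complementary regions of $\Gamma'$, and let $c\ge 1$ be the number of connected components of $\Gamma'$. Summing valences over the vertices gives $4V=2E$, so $E=2V$. Euler's formula for a (possibly disconnected) graph embedded in $S^2$ reads $V-E+F=1+c$; substituting $E=2V$ yields $F-V=1+c\ge 2$, that is, $V\le F-2\le 4-2=2$. This gives the claim.

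Finally I would remark that, since $V$ is precisely the number of crossing circles of the fully augmented link $L$ associated to $D$, this also re-proves the observation recorded just before the lemma. There is essentially no obstacle here: the only points requiring care are bookkeeping ones, namely using the form of Euler's formula valid for graphs whose complementary regions need not be disks and which may a priori be disconnected, and checking that $\Gamma'$ is genuinely $4$-valent at every vertex — which holds by construction, with loops counted with multiplicity two (as already happens for the one-vertex graph $\Gamma'$ arising from the diagrams $D_k$).
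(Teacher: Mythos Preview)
Your proof is correct and follows essentially the same Euler-characteristic argument as the paper: both use $4$-valence to get $E=2V$ and then Euler's formula to bound $V$ by $F-2\le 2$. The only difference is cosmetic---the paper writes $V-E+F=2$ directly (implicitly using that $\Gamma'$, coming from a connected knot projection, is connected with disc faces), whereas you invoke the disconnected version $V-E+F=1+c$; either way the conclusion is immediate.
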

\begin{proof}
The proof is a simple computation with Euler characteristics. The graph $\Gamma'$ defines a cell decomposition of $S^2$ and therefore we have:
$$
\#\text{vertices of }\Gamma'-\#\text{edges of }\Gamma'+\#\text{complementary regions of }\Gamma'=2.
$$
Since $\Gamma'$ is a $4$-valent graph, the number of edges of $\Gamma'$ is twice the number of its vertices and therefore we have
$$
\#\text{complementary regions of }\Gamma'=2+\#\text{vertices of }\Gamma'.
$$
Since the complementary regions of $\Gamma'$ are in bijection with those of $\Gamma$, we deduce that $\Gamma'$ has at most $2$ vertices.
\end{proof}

If $D$ satisfies the hypotheses of \Cref{thm: main theorem}, then $D$ has more than one twist region, and $\Gamma'$ must have exactly two vertices. The only possible $4$-valent graphs in $S^2$ are, up to isotopy, the ones in \Cref{fig: graphs 2 vert}.

\begin{figure}[h]
    \centering
    \includegraphics[width=0.5\textwidth]{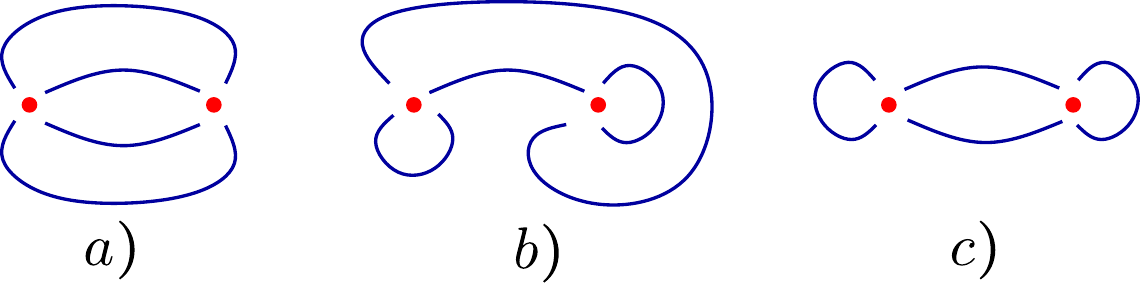}
    \caption{The possible $4$-valent graphs with two vertices in $S^2$ up to isotopy.}
    \label{fig: graphs 2 vert}
\end{figure}
The reader can easily check that if a knot diagram $D$ satisfies the hypotheses of \Cref{thm: main theorem} and yields a graph $\Gamma'$ of type $b)$ or $c)$, then it must be the diagram of a knot that is a connected sum of two non-trivial torus knots. Such knots are persistently foliar by virtue of \cite{DRpersistently}, and so we focus our attention on knots whose diagram has graph $\Gamma'$ of type $a)$, in which case we can suppose that $D$ is as in \Cref{fig: Borr}, where $m\in \{-1,0,1\}$, and $k$ and $n$ are non-zero integers. 
We remark that when $m=0$ the associated fully augmented link is the Borromean rings, when $m=1$ is $L8n5$, and when $m=-1$ is the mirror of $L8n5$.

\begin{figure}[h]
    \centering
    \includegraphics[width=0.6\textwidth]{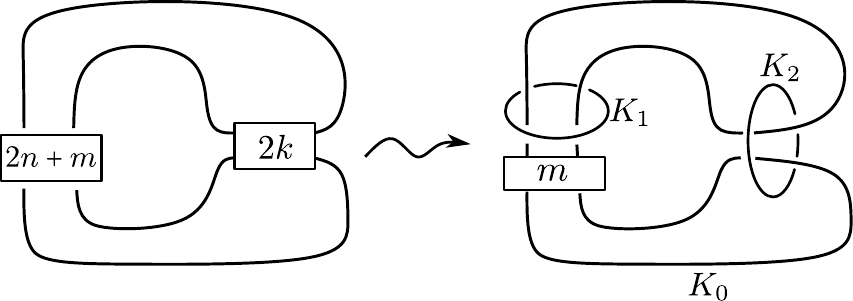}
    \caption{The possible knot diagrams whose associated graph $\Gamma'$ is of type $a)$.}
    \label{fig: Borr}
\end{figure}

\begin{prop}\label{prop: at most four vertices}
Suppose that $K\subset S^3$ has a diagram $D$ satisfying the hypotheses of \Cref{thm: main theorem}, and such that the graph $\mathcal{G}_g\cup \mathcal{G}_r$ has at most $4$ vertices. Then $K$ is persistently foliar. 
\end{prop}

\begin{proof}
We know from the previous discussion that $D$ is as in  \Cref{fig: Borr}, where $m\in\{-1,0,1\}$.

We first consider the case $m=0$, so that the fully augmented link $L$ associated to $D$ is the Borromean rings. For $i=1,2$, denote with $\frac{1}{k_i}$ the surgery coefficient on $K_i$ that allows us to recover $K$ from $L$. Note that our hypothesis on the weights of the graphs $\mathcal{G}_g$ and $\mathcal{G}_r$ implies that at least one among $k_1$ and $k_2$ has absolute value greater than or equal to two. By using the symmetries on the Borromean rings if necessary, we can suppose that this happens for $k_1$; hence $\frac{1}{k_1}\in (-1,1)$. To obtain the claim, it suffices to prove that for any non-meridional slope $s$ on $K_0$ we can find a coorientable taut foliation in the exterior of $L$ intersecting the boundary components corresponding to $K_0, K_1, K_2$ transversely in a foliation of slope $s, \frac{1}{k_1}, \frac{1}{k_2}$, respectively. 

If $\frac{1}{k_2}>0$, this follows from \Cref{thm: sec 3 general main thm borr} and the fact that, by \Cref{prop: sec3 from B to B'}, the boundary train tracks of the branched surface $B$ constructed in \Cref{subsub example: B} realise every rational slope in $(-\infty,1)$ on $K_1$ and every rational slope in $(0,\infty)$  on $K_2$. If $\frac{1}{k_2}<0$, since a knot is persistently foliar if and only if its mirror is, we can consider the mirror of $K$, to which now we can apply the argument of the previous sentence.

Suppose now $m=\pm 1$ and, by taking the mirror of $K$ if necessary, that $\frac{1}{k_2}>0$. The hypotheses of \Cref{thm: main theorem} imply that $2n+m\geq 3$ or $2n+m \leq -3$. In both cases, $K$ is surgery on the link $L8n5$, with surgery coefficient $\frac{1}{k_2}$ on $K_2$ and $\frac{1}{k_1}$ on $K_1$, where
\begin{enumerate}
\item $k_1=-\frac{2n+m-1}{2}$ if $2n+m\geq 3$, in which case $\frac{1}{k_1}< 0$.
\item $k_1=-\frac{2n+m-1}{2}$ if $2n+m\leq -3$, in which case $0<\frac{1}{k_1}\leq \frac{1}{2}$.
\end{enumerate}
In any case, we have that $\frac{1}{k_1}\in (\infty,1)\setminus\{0\}$ and $\frac{1}{k_2}\in (0,\infty)$, and the conclusion follows from \Cref{prop: sec3 from B to B' twist} and \Cref{thm: sec 3 general main thm borr twist}.
\end{proof}

\subsection{The branched surface $\overline{B}$}\label{subsec: the branched surface Bbar}
Having proved \Cref{thm: main theorem} when the graph  $\mathcal{G}_g\cup \mathcal{G}_r$ has at most  $4$ vertices, we now work with the following assumpion.

\begin{namedassumption}
The knot $K\subset S^3$ has a diagram $D$ that satisfies the hypotheses of \Cref{thm: main theorem}, and such that the graph $\mathcal{G}_g\cup \mathcal{G}_r$ has more than $4$ vertices.
\end{namedassumption}

We define a branched surface in the exterior of the fully augmented link $L$ associated to the diagram $D$. The constructions follows very closely the one presented for the Borromean rings and the link $L8n5$ in \Cref{sec: Borromean rings and Bing doubles}.

We denote by $M$ the exterior of $L$, and by $\partial_i M$ the boundary component corresponding to $K_i$, for $i=0, \dots n$. We begin the construction of our branched surface in $M$ by fixing a torus $T$ parallel to $\partial_0 M$. More precisely, we choose a collar neighbourhood $ \nu\partial_0 M=\partial_0 M \times [0,1]$ of $\partial_0 M$, with $\partial_0 M= \partial_0 M\times \{0\}$, and set $T=\partial_0 M\times \{1\}$. When all twist regions in $D$ contain an even number of crossings, the component $K_0$ of $L$ is embedded in the projection sphere. Otherwise, it is convenient to introduce an auxiliary link $L'$, whose exterior $M'$ is related to $M$ by a mutation: we cut $M$ along the discs $D_i$ that are adjacent to some crossing, and reglue with a rotation that removes the crossing. This new link $L'$ has the same crossing circles and discs $D_i$ of $L$, but has some new components $K'_0,\dots, K'_k$, whose union is now embedded in the projection sphere. We denote by $\partial_j M'$ the boundary component of the exterior of $L'$ associated to $K'_j$, for $j=0,\dots,k$, and $\nu \partial_j M'$ a collar of it. 

We define the $2$-complex $\Sigma$ in $M$ as the union of the following surfaces:

\begin{itemize}
\item The torus $T$.
\item The twice-punctured discs bounded in $M\setminus \big{(}{\partial_0 M \times [0,1)}\big{)}$ by the crossing circles $K_1,\dots, K_n$, that we still denote by $D_1,\dots, D_n$ respectively (see \Cref{fig: horizontal disk}).
\item The closures of the connected components of $S^2\setminus (D_1\cup \cdots D_n \cup \nu\partial_0 M)$ when $D$ has only twist regions with an even number of crossings.
Otherwise, we take the closures of the connected components of $S^2\setminus (D_1\cup \cdots D_n \cup \nu\partial_0 M'\cup \cdots\cup \nu\partial_k M')$ in $M'$, and take their image in $M$ after the mutation\footnote{This procedure is detailed at the beginning of \Cref{subsec: the link L8n5}, and showed in the top-left part of \Cref{fig: branched twist} in the case of a right-handed half twist. The case of a left-handed half twist is completely analogous.}.
\end{itemize}

We denote the last family of surfaces by $\mathcal{S}$.  

\begin{lemma}\label{lemma sec4: exterior of Sigma}
The complement of a regular neighbourhood of $\Sigma$ in $M$ is homeomorphic to the disjoint union of two balls, $X_1$ and $X_2$, and a product $T\times [0,1]$.
\end{lemma}
\begin{proof}
When all twist regions in $D$ have an even number of crossings, the proof is exactly as in \Cref{lemma sec3: exterior of Sigma}. Otherwise, we first cut $M$ along the torus $T \subset \Sigma$, obtaining a component homeomorphic to $T \times [0,1]$ and another component that can be identified with $M$ itself.  Cutting this latter component along the surfaces in $\mathcal{S}$ and the discs $D_i$ yields a manifold homeomorphic to the one obtained by cutting $M'$ along its intersection with the projection sphere and the discs $D_i$. By the same argument as in the proof of \Cref{lemma sec3: exterior of Sigma}, this manifold is the disjoint union of two balls.
\end{proof}

We want to obtain a branched surface by smoothing $\Sigma$. To do so, we assign coorientations to the surfaces composing $\Sigma$ and smooth it accordingly. More precisely, we proceed as follows:
\begin{enumerate}
    \item Coorient all the surfaces in $\Sigma$ so that, after smoothing according to these coorientations, the resulting branched surface has two cusps on $T$.
    
    \item Fix an element $S$ in $\mathcal{S}$, which will later be removed from $\Sigma$.
    
    \item For each disc $D_i$, choose a desingularisation of $\Sigma$ in a neighbourhood of $D_i$ so that the train track $\tau_i = B \cap \partial_i M$ realises the slope $\frac{1}{k_i}$, where $B$ denotes the branched surface obtained by smoothing $\Sigma \setminus S$ according to the chosen coorientations.
\end{enumerate}

\subsubsection{Coorientations of the elements in $\mathcal{S}$.}

By construction there is a canonical bijection between $\mathcal{S}$ and the set of connected components of $S^2\setminus \Gamma$, where $\Gamma$ is the graph associated to $D$ in \Cref{sec: intro}. We fix a colouring of these components, obtained as described in \Cref{sec: intro}, and use it to assign coorientations to the elements in $\mathcal{S}$. More precisely, recall that $X_1$ and $X_2$ denote the two balls in the exterior of $\Sigma$, and for consistency, we always draw our figures as if the reader is located inside the ball $X_1$. We require that the coorientation points into $X_1$ (resp. into $X_2$) for sectors corresponding to green (resp. red) complementary regions of $\Gamma$. 
We will also adhere to the convention that the colour green (resp. red) indicates the positive (resp. negative) side of a sector of $\Sigma$. When the diagram $D$ has some twists regions with an odd number of crossings, we first assign coorientations in the exterior of the auxiliary link $L'$ according to this convention, and then introduce the twist. See \Cref{fig: coorientations in S^2} for an example.

\begin{figure}[]
    \centering
    \includegraphics[width=0.5\textwidth]{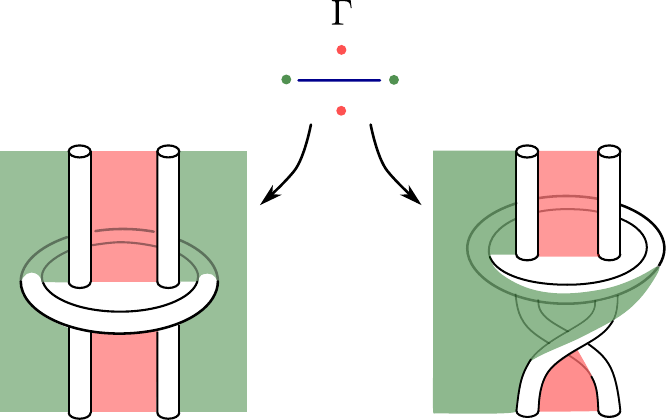}
    \caption{Coorientation of the elements in $\mathcal{S}$ induced by a colouring of the complementary regions of $\Gamma$. To simplify the figure, we have not shown the fourth element in $\mathcal{S}$ in the two-complex on the right.}
    \label{fig: coorientations in S^2}
\end{figure}

The connection between the graphs $\mathcal{G}_r$ and $\mathcal{G}_g$ and the link $L$ follows from the next observation, which ties these graphs -- without the data of the weights on the edges -- with the combinatorial data of the diagram of the link $L$ together with the choice of coorientations of the sectors in $\mathcal{S}$.

We define $\mathcal{S}_+$ (resp. $\mathcal{S}_-$)  as the set of sectors in $\mathcal{S}$ whose positive (resp. negative) side is contained in $X_1$. By our convention on the coorientations, $\mathcal{S}_+$ can be identified with the set of vertices of $\mathcal{G}_g$, and $\mathcal{S}_-$ with that of $\mathcal{G}_r$. The data of the edges can be recovered we defining $\mathcal{G}_+$ and $\mathcal{G}_-$ as follows.

\begin{itemize}
\item The vertices of $\mathcal{G}_{\pm}$ are the elements in $\mathcal{S}_{\pm}$.
\item For every intersection of two elements in $\mathcal{S}_{\pm}$ with the same connected component of $\partial_1 M\sqcup \cdots \sqcup \partial_n M$, we add an edge between the corresponding vertices of $\mathcal{G}_{\pm}$.
\end{itemize}
It is straightforward to show that the graph $\mathcal{G}_+$ is isomorphic to the underlying unweighted graph of $\mathcal{G}_g$, and that  $\mathcal{G}_-$ is isomorphic to that of $\mathcal{G}_r$. In fact, each twist region of $D$ gives rise to a blue arc of $\Gamma$ and to a crossing circle of $L$, and the two elements of $\mathcal{S}$ intersecting it  correspond to the two complementary regions of $\Gamma$ intersecting the blue arc only in its endpoint.

This discussion, together with \Cref{lemma: grafi connessi sse contrattili}, implies that under the standing assumptions of this section we have the following.

\begin{lemma}\label{lemma: grafi delle regioni}
The graphs $\mathcal{G}_+$ and $\mathcal{G}_-$ are both trees. \qed
\end{lemma}

\subsubsection{Coorientations of the discs $D_i$ and the torus $T$.}

Recall that to each component $K_1,\dots, K_n$ of $L$ is associated a non-zero integer number $k_i$ with the property that 
$$
S^3_{\bullet, \frac{1}{k_1},\dots, \frac{1}{k_n}}(L)=S^3_\bullet(K).
$$
The hypotheses of \Cref{thm: main theorem} imply that at least one $k_i$ has either absolute value greater than two, or it is associated to a twist region with a number of crossings that is odd and greater than or equal to $3$. We suppose $i=1$ without loss of generality. We also consider another index $i_0\ne 1$, say $i_0=2$, and fix a sector $S\in\mathcal{S}$ disjoint from $\partial_1 M$ and $\partial_{2} M$. Such a sector exists by the lemma below. We remark that this is the only part of the section where we use the standing assumption on the number of vertices of $\mathcal{G}_r\cup \mathcal{G}_g$, which is why we had to use an \emph{ad hoc} argument in the other cases.

\begin{lemma}\label{lemma: settore che non interseca}
There exists a sector $S\in \mathcal{S}$ that does not intersect $\partial_1 M$ and $\partial_{2}M$.  
\end{lemma}

\begin{proof}
The boundary components $\partial_1 M$ and $\partial_{2} M$ intersect at most two sectors\footnote{Indeed, exactly two under our assumptions.} in $\mathcal{S}$ each. Our standing assumptions imply that the set $\mathcal{S}$ has more than four elements, being it in bijection with set the vertices of the graph $\mathcal{G}_g\cup\mathcal{G}_r$, and so there is at least one sector $S\in \mathcal{S}$ with the desired property.
\end{proof}



We now declare the coorientations on the discs $D_i$ and on the torus $T$, starting with $D_2$, that we coorient arbitrarily. For our purposes\footnote{Recall that a knot is persistently foliar if and only if its mirror is.}, by taking the mirror of $K$ if necessary, we can suppose that the surgery coefficient $\frac{1}{k_2}$ is negative. Moreover, up to reversing all the coorientations of the sectors in $S^2$, we can suppose that the two sectors in $\mathcal{S}$ that intersect $\partial_2 M$ have coorientations pointing in $X_1$.  The intersection $\Sigma \cap \partial_2 M$ is union of $\lambda_2=\partial D_2$ and two arcs, coming from two sectors in $\mathcal{S}$, that close up to two meridional curves on $\partial_2 M$. Here, every sector involved is oriented by using the fixed coorientation and the ambient orientation, and their boundaries get the induced orientations. By our convention on the coorientations of the sectors in $\mathcal{S}$, we have that the two meridional curves of intersection on $\partial_2 M$ have opposite orientations. We fix a curve component $c_2$ of $D_2\cap T$ in the following way:
\begin{enumerate}
\item If the disc $D_2$ is adjacent to a crossing, we choose $c_2$ arbitrarily. 
\item Otherwise, we choose $c_2$ to be the circle component of $T\cap D_2$ that intersects the sector of $\mathcal{S}$ defining the meridional curve meeting $D_2$ positively; see \Cref{fig: cusp 2}.
\end{enumerate}

\begin{figure}[]
    \centering
    \includegraphics[width=0.23\textwidth]{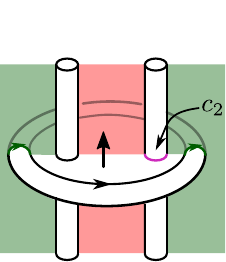}
    \caption{The figure shows how to choose the curve $c_2\in D_2\cap T$.}
    \label{fig: cusp 2}
\end{figure}

Fix a curve $c_1\in D_1\cap T$ arbitrarily, and consider the two annuli obtained by cutting $T$ along $c_1$ and $c_2$. We assign coorientations to $D_1$ and the annuli in the unique way that produces two cusps on $T$, as defined in \Cref{def: cusps}, along $c_1$ and $c_2$. More precisely, the coorientation of $D_2$ and the constraint of creating a cusp on $c_2$ force the coorientation on the annuli. Next, the coorientation on the annuli and the constraint of creating a cusp along $c_1$ impose a unique coorientation on $D_1$.

Finally, we coorient the discs $D_i$, for $i\ne 1,2$.
Let $\partial_{i_1} M,\dots, \partial_{i_m} M$ be the boundary components of $M$ that intersect the sector $S$, and denote by $\sigma_{i_j}$ the oriented arc $S\cap \partial_{i_j} M$. This arc intersects $D_{i_j}$ in two points, which coincide if there is no crossing adjacent to the disc, and are antipodal otherwise. We coorient $D_{i_j}$ so that the sign of intersection between $D_{i_j}$ and $\sigma_{i_j}$ is opposite to the sign of $\frac{1}{k_{i_j}}$. \Cref{fig: coori discs} shows an example.
We fix the coorientations of the discs that do not intersect $S$ arbitrarily.

\begin{figure}[]
    \centering
    \includegraphics[width=0.6\textwidth]{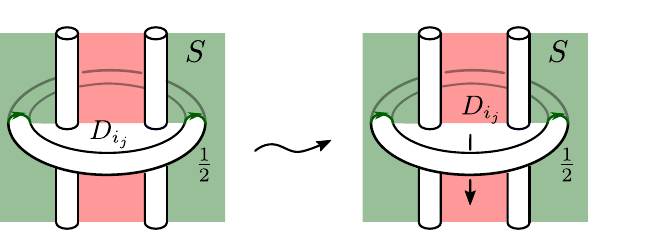}
    \caption{Since the surgery coefficient $\frac{1}{2}$ is positive, we coorient the disc $D_{i_j}$ so that the intersection with $\sigma_{i_j}$ is negative. If the surgery coefficient was negative, or $S$ was the other sector intersecting $\partial_{i_j}M$, we would have assigned the opposite coorientation on the disc.}
    \label{fig: coori discs}
\end{figure}
\subsubsection{Desingularisation}
We have cooriented every surface composing $\Sigma$, and we now discuss the possible desingularisations of the intersections between the discs $D_i$ and the sectors in $\mathcal{S}$.

For each $i=1,\dots, n$, we orient the component $K_i$ of $L$ as the boundary of $D_i$, and we consider the canonical meridian and longitude basis on $\partial_i M$.  Each disc intersects the sectors in $\mathcal{S}$ along three arcs, and as already done in \Cref{subsubsec: desingularisation} and depicted in \Cref{fig: possible slidings}, for each arc we have two possible ways of slide the sectors along the discs, thereby creating two arcs of double points. Therefore, near each disc $D_i$ we have eight possible choices of desingularisations, and each choice produces five sectors in $D_i$. 
When choosing how to desingularise, we have to pay attention in avoiding half sink disc sectors in $D_i$, and we want that the boundary train track $\tau_i=B\cap \partial_i M$, where $B$ is obtained by smoothing $\Sigma\setminus\{S\}$, realises the slope $\frac{1}{k_i}$, for $i=1,\dots,n $.
\newline

We first suppose that the disc $D_i$ is not adjacent to a crossing. Among the five sectors that are created in $D_i$, there are two of them that intersect only one component of $D_i\cap T$, and these are created when sliding on $D_i$ the two sectors in $\mathcal{S}$ the intersect $\partial_i M$. We denote by $\Delta_i^+$ the one created by sliding the sector in $\mathcal{S}$ whose intersection with $\partial_i M$ is a positive meridian of $K_i$, and by $\Delta_i^-$ the other. Each of these sectors has in its boundary two arcs of branch locus coming from the intersection between $D_i$ and the sectors in $\mathcal{S}$, and depending on the choice of desingularisation, the cusp directions along them either both point in the sector, or both point out of it. 
We say that a desingularisation of $\Sigma$ in a neighbourhood of a disc $D_i$ is of:
\begin{itemize}
\item type $\Delta^{\pm}-in$ if these cusp directions point in $\Delta_i^+$ and $\Delta_i^-$.
\item type $\Delta^{\pm}-out$ if these cusp direction point out of $\Delta_i^+$ and $\Delta_i^-$.
\item type $\Delta^+-out, \Delta^--in$ if these cusp directions point out of $\Delta_i^+$ and in to $\Delta_i^-$.
\end{itemize}
Clearly there is also a desingularisation of type  $\Delta^+-in, \Delta^--out$, but we will not need it.
\begin{rem}
The type of desingularisation only depends on how we slide the sectors that intersect $\partial_i M$, and we still have two choices on how to desingularise the arc of intersection in $D_i$ that is disjoint from $\partial_i M$. We will not need this additional degree of freedom, and any choice of desingularisation of that arc will work.  
\end{rem}

\begin{lemma}\label{lemma: desing no crossing}
Suppose that $D_i$ is not adjacent to a crossing, choose a desingularisation of $\Sigma$ in a neighbourhood of it, and smooth $\Sigma$ near $D_i$ according to the coorientations. Then:
\begin{itemize}
\item If the desingularisation is of type $\Delta^{\pm}-in$, the boundary train track on $\partial_i M$ realises all rational slopes in $(-\infty, \infty)$.
\item If the desingularisation is of type $\Delta^{\pm}-out$, the boundary train track on $\partial_i M$ realises all rational slopes in $(-1, 1)$.
\item If the desingularisation is of type $\Delta^+-out, \Delta^--in$, the boundary train track on $\partial_i M$ realises all rational slopes in $(-\infty, 1)$.
\end{itemize}
\end{lemma}
\begin{proof}
    Before the desingularisation, the intersection $\Sigma \cap \partial_i M$ consists of the longitude $\lambda_i = \partial D_i$ and two oppositely oriented meridians. With the notation introduced in \Cref{subsec: train tracks and rational slopes}, after smoothing, this $1$-complex becomes a train track of type $t_1 + t_2$, where $t_1$ is either $a)$ or $b)$, and $t_2$ is either $c)$ or $d)$. 
    More precisely, two arcs of branch locus in the boundary of $\Delta_i^+$ come from the intersection between $D_i$ and the sectors in $\mathcal{S}$, and $t_1$ is $a)$ if the cusp directions along them  point out of $\Delta_i^+$; it is $b)$ otherwise. Analogously, $t_2$ is $c)$ when the cusp directions in the boundary of $\Delta_i^-$ coming from the intersection between $D_i$ and the sectors in $\mathcal{S}$, point out of it, and $d)$ otherwise. We conclude by using \Cref{lemma: unlinked}. 
\end{proof}

\begin{figure}[]
    \centering
    \includegraphics[width=0.7\textwidth]{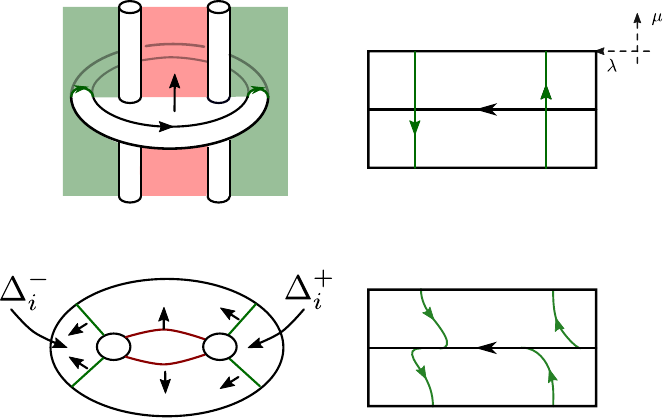}
    \caption{Top: the complex $\Sigma$, and its intersection with $\partial_i M$, before the desingularisation,. Bottom: the sectors in the disc $D_i$ after a desingularisation of type $\Delta^+-out, \Delta^--in$, and the boundary train track on $\partial_i M$.}
    \label{fig: desing}
\end{figure}
\Cref{fig: desing} shows an example of desingularisation of type $\Delta^+-out, \Delta^--in$, and the induced train track on the boundary.

In conclusion, if $D_i$ is not adjacent to a crossing, we choose the following desingularisation:
\begin{itemize}
    \item type $\Delta^{\pm}-out$, if $i=1$
    \item type $\Delta^+-out, \Delta^--in$, if $i=2$.
\item type $\Delta^{\pm}-in$, otherwise.
\end{itemize}

If $D_i$ is adjacent to a crossing of $L$, we perturb $\Sigma$ near $D_i$ as described in \Cref{fig: desing_twist right} for a right handed-crossing, and in \Cref{fig: desing_twist left} for a left-handed crossing. The pictures also also show the cusp directions created on the disc, and the boundary train tracks obtained after smoothing.

\begin{figure}[]
    \centering
    \includegraphics[width=0.8\textwidth]{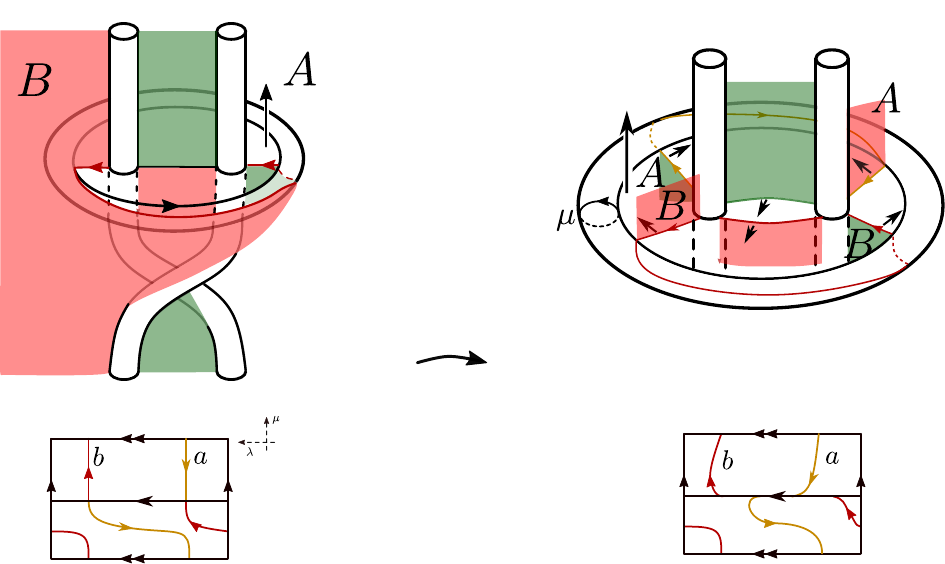}
    \caption{How to desingularise $\Sigma$ in a neighbourhood of a disc $D_i$ adjacent to a right-handed crossing. We denote by $b$ and $a$ the intersection of $\partial_i M$ with the sector $B$ and $A$, respectively. On the left, the sector $A$ is not drawn to make the picture simpler to understand. If the coorientation of the sectors in $\mathcal{S}$, or the one of $D_i$, is opposite to the one of the figure, we slide the sectors in $\mathcal{S}$ exactly in the same way, obtaining the same train track on $\partial_i M$, but all the cusp directions along their intersections with $D_i$ are reversed.}
    \label{fig: desing_twist right}
\end{figure}

\begin{figure}[]
    \centering
    \includegraphics[width=0.8\textwidth]{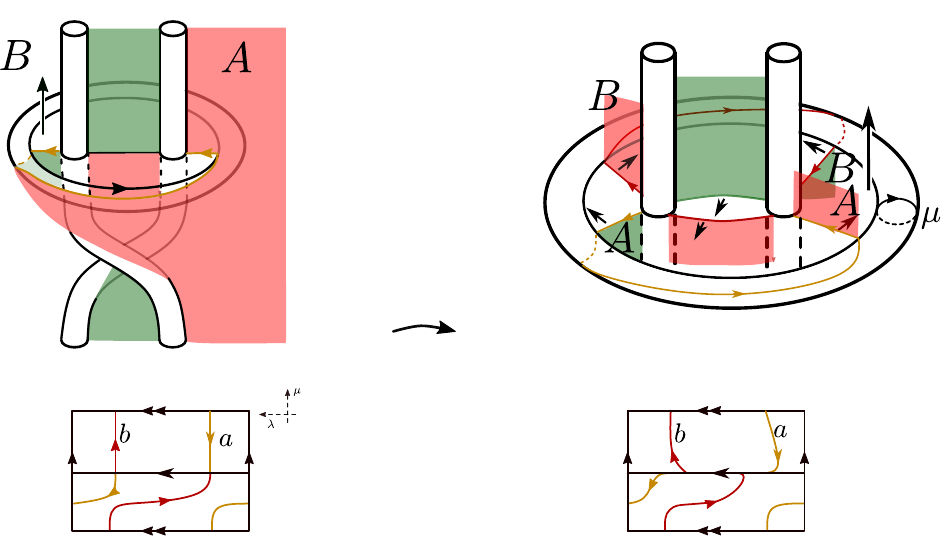}
    \caption{How to desingularise $\Sigma$ in a neighbourhood of a disc $D_i$ adjacent to a left-handed crossing. On the left, the sector $B$ is not drawn. If the coorientation of the sectors in $\mathcal{S}$, or the one of $D_i$, is opposite to the one of the figure, we slide the sectors in $\mathcal{S}$ exactly in the same way, obtaining the same train track on $\partial_i M$, but all the cusp directions along their intersections with $D_i$ are reversed.}
    \label{fig: desing_twist left}
\end{figure}

\begin{prop}\label{prop: from sigma to B}
There exists a smoothing of $\Sigma$ to a coorientable branched surface $\overline{B}$ satisfying the following properties:
\begin{itemize}
\item $\overline{B}$ has two cusps on $T$.
\item $\overline{B}$  contains no sink discs or half sink discs.
\item $\partial \overline{B} \cap \partial_i M$ realises the slope $\frac{1}{k_i}$ for each $i=1,\dots, n$;

\end{itemize}
\end{prop}
\begin{proof}
Let $\overline{B}$ be the branched surface obtained by smoothing $\Sigma$, perturbed as described above near the discs $D_i$, according to our choices of coorientations.
The proof proceeds exactly as the proof of \Cref{prop: sec 3 from sigma to B}, and it is enough to check that no sectors in the discs $D_i$ are half sink discs, and that the boundary train track realise the desired slopes. 

First of all, suppose that $D_i$ is adjacent to a crossing. In this case, by smoothing as in \Cref{fig: desing_twist right} or \Cref{fig: desing_twist left}, we see that every sector in $D_i$ has some cusp direction pointing out of it, and so there are no half sink discs.
If $D_i$ is adjacent to right-handed crossing, then by definition of the link $L$, the crossing circle $K_i$ is associated to a twist region of the diagram $D$ with an odd number $m\geq 3$ of right-handed crossings; hence $\frac{1}{k_i}$ is negative. The train track that we obtain on $\partial_i M$ is of type $a)+d)$, and hence realises all rational slopes in $(-\infty, 1)$ and, in particular, $\frac{1}{k_i}$. The case when $D_i$ is adjacent to a left-handed crossing is analogous.

Suppose now that $D_i$ is not adjacent to a crossing.
Recall that the cusp directions along $c_1$ and $c_2$ point into $D_1$ and $D_2$ respectively, and that on the other circles of intersection between $T$ and the discs $D_i$ the cusp directions point into $T$, see \Cref{fig: cusp}.
In particular, if $i\ne 1,2$, all cusp directions along $D_i\cap T$ point out of $T$, and since each sector in $D_i$ intersects at least one component of $D_i\cap T$, we have no half sink discs in it, and the boundary train track on $\partial_i M$ realises $\frac{1}{k_i}$ by \Cref{lemma: desing no crossing}.

If $i=1,2$, then the only possible half sink disc in $D_i$ is the sector intersecting $D_i\cap T$ only in $c_i$, \ie, the sector $\Delta_i^+$ or $\Delta_i^-$. When $i=1$, we have fixed a desingularisation of type $\Delta^{\pm}-out$, and so both these sectors have cusp directions pointing out of them. Moreover, \Cref{lemma: desing no crossing} implies that $\frac{1}{k_1}$ is realised by the boundary train track, since by our assumptions the slope $\frac{1}{k_1}\in (-\frac{1}{2}, \frac{1}{2})$ when $D_1$ is not adjacent to a crossing.  If $i=2$, by our choice of $c_2$, we know that the only possible half sink disc is the sector $\Delta_2^+$, and, since we have chosen a desingularisation of type $\Delta^+-out, \Delta^--in$, we know that there are cusp directions pointing out of it.
The boundary train track of $\overline{B}$ on $\partial_2 M$ realises all rational slopes in $(-\infty,1)$ by \Cref{lemma: desing no crossing} and, since $\frac{1}{k_2}$ is negative, we conclude.
\end{proof}

\subsection{The branched surface $B$}\label{subsec: more than 4 elements}
We are ready to define the branched surface that we will use to define the desired taut foliations. We define it as
$$
B=\overline{B}\setminus S,
$$ 
where $S$ is the sector in $\mathcal{S}$ given by \Cref{lemma: settore che non interseca}. Notice that since every cusp direction along the boundary of $S$ points outside of it, $B$ is a well-defined branched surface, \ie, $B$ satisfies the local models of \Cref{branched surface}. We first observe the following.

\begin{lemma}\label{lemma: conn implica conn}
Let $\mathcal{G}_1$ and $\mathcal{G}_2$ be two graphs, and let $\mathcal{G}'_1$ be the graph obtained from $\mathcal{G}_1$ by removing a vertex $v$ together with the interiors of all edges incident to it.
Denote by $v_1,\dots, v_m$ be the vertices of $\mathcal{G}'_1$ that were adjacent to $v$ in $\mathcal{G}_1$, and let $\mathcal{G}$ be the graph obtained from $\mathcal{G}'_1\sqcup \mathcal{G}_2$ by adding, for $i=1,\dots, m$, an edge connecting $v_i$ to some vertex $w_i$ of $\mathcal{G}_2$.
If $\mathcal{G}_1$ and $\mathcal{G}_2$ are connected, then $\mathcal{G}$ is connected.
\end{lemma}
\begin{proof}

Fix a vertex $z$ of $\mathcal{G}$. Since $\mathcal{G}_2$ is connected, it suffices to show that there exists a path in $\mathcal{G}$  from $z$ to one of the vertices $w_i$, for some $i=1,\dots, m$. 
If $z$ is a vertex of $\mathcal{G}_2$, the existence of such a path is clear. 
Otherwise, by the connectedness of $\mathcal{G}_1$ there is a path in $\mathcal{G}_1$ from $z$ to $v$, that decomposes into shorter paths after removing $v$ and the interiors of all edges incident to it. By construction, the endpoint of one of these paths is $z$, and the other\footnote{This path may also be degenerate, consisting only of the vertex $z$.} is one of the vertices $v_i$. Hence, $z$ is connected to some $v_i$, and therefore to the corresponding vertex $w_i$. 
\end{proof}

Recall the definition of the graphs $\mathcal{G}_+$ and $\mathcal{G}_-$ given before the statement of \Cref{lemma: grafi delle regioni}. For concreteness, we suppose that $S$ corresponds to a vertex in $\mathcal{G}_-$, but the proof of the next proposition works in exactly the same way if $S$ corresponds to a vertex in $\mathcal{G}_+$.

\begin{prop}\label{prop: from B to B'} 
The branched surface $B$ satisfies the properties of \Cref{prop: from sigma to B}. Moreover, the complement $M\setminus {\rm int}(N_{B})$ decomposes as a sutured manifold $X\cup Y$, where:

\begin{itemize}
\item $X$ is a product sutured ball. 
\item $(Y,\partial_v Y)=(\nu \partial_0 M, \mathcal{A}_1\cup \mathcal{A}_2)$, where $\mathcal{A}_1\cup \mathcal{A}_2\subset  \partial_0 M \times \{1\}$ are two annuli whose slope is a meridian of $K_0$.
\end{itemize}
\end{prop}

\begin{proof}  
The two cusps on $T$ are a consequence of the smoothing, exactly as it was for $\overline{B}$, and removing the sector $S$ has the effect of gluing some sectors of $\overline{B}$ along arcs in their boundary. Since $\overline{B}$ has no half sink discs, the same holds for $B$.

Removing $S$ has the effect of removing the branch $\sigma_{i_j}$ from the train track $\partial \overline{B}\cap \partial_{i_j} M$, for $i_j=i_1,\dots i_m$, and of leaving the other boundary train tracks unmodified. From the proof of \Cref{prop: from sigma to B}, we have that boundary train track of $\overline{B}$ on $\partial_{i_j} M$ realises every rational slopes in $(-x,y)$, for some $x,y\in \{1, \infty\}$ depending on the boundary component. Our choice of coorientation on the disc $D_{i_j}$ was made so that the branch $\sigma_{i_j}$ is oriented as the meridian $\mu_{i_j}$ when $\frac{1}{k_{i_j}}$ is negative, and in the opposite direction when when $\frac{1}{k_{i_j}}$ is positive. Hence, the train track obtained by removing $\sigma_{i_j}$ realises every rational slopes in $(-x,0)$ when $\frac{1}{k_{i_j}}$ is negative, and $(0,y)$ when $\frac{1}{k_{i_j}}$ is positive. Since we know from \Cref{prop: from sigma to B} that, for each $i=1,\dots, n$, the slope $\frac{1}{k_i}$ is realised by the boundary train track of $\overline{B}$, we deduce that the same holds for $B$.

We are left to prove the statement regarding the structure of $M\setminus {\rm int}(N_{B})$ as a sutured manifold.  When we remove $S$ from $\overline{B}$, the two balls $X_1$ and $X_2$ in the exterior of $\Sigma$ are glued along a subdisc of their boundary. Therefore we have that $M\setminus {\rm int}(N_{B})= X\cup Y$, where $X$ is topologically a ball, and $(Y, \partial_v Y)$ is homeomorphic to $(\nu \partial_0 M, \mathcal{A}_1\cup \mathcal{A}_2)$, with $\mathcal{A}_1\cup \mathcal{A}_2\subset \partial_0 M \times \{1\}$ two annuli produced by the two meridional cusps on $T$.
Regarding the sutured manifold $(X,\gamma_X)$, we denote 
$$
R(\gamma_{X})=\partial_h N_{B}\cap X=R_{+}\sqcup R_{-},$$
where $R_+$ is the subsurface of $\partial_h N_{B}\cap X$ along which the coorientation points into $X$, and $R_-$ is the one along which the coorientation points out of $X$.
We want to show that these two sets are connected, and use \Cref{rem: product ball} to conclude. We show it for $R_+$, the discussion for $R_-$ being analogous. If we denote by $\mathcal{R}_+$ the set of sectors of $B$ whose positive side is contained in $X$, then $R_+$ is homeomorphic to the union of all the positive sides of the sectors in $\mathcal{R}_+$. Observe that $\mathcal{S}'=\mathcal{S}\setminus\{S\}$ is contained in $\mathcal{R}_+$.
Our choice of coorientations implies that the positive side of every sector $F\in \mathcal{R}_+$ can be connected, by a path in $R_+$, to that of a sector in $\mathcal{S}'$. In fact:
\begin{itemize}
\item  This is immediate if $F\in \mathcal{S}'$.
\item If $F\subset T$, then either $F$ intersects both sides of some sector in $\mathcal{S}'$, or it intersects exactly two sectors in $\mathcal{S}'$, as showed in \Cref{fig: curve sector torus}. In both cases, we can find the desired path from the positive side of $F$ to that of a sector in $\mathcal{S}'$.
\item If $F$ is contained in some disc $D_i$, then, as a consequence of our coorientations of the sectors in $\mathcal{S}$, its positive side can be connected, by a path lying on the positive side of $D_i$, to the positive side of at least one of the sectors in $\mathcal{S}'$ that intersect $D_i$. 
\end{itemize}
This observation implies that, in order to prove that $R_+$ is connected, it is sufficient to prove that the positive sides of any two elements in $\mathcal{S}'$ can be connected by a path in $R_+$. 
In order words, it is enough to prove that the graph $G$, defined as follows, is connected. The vertices of $G$ are elements in $\mathcal{S}'$, and two vertices are connected by an edge if the positive sides of the corresponding sectors in $\mathcal{S}'$ can be joined by a path in $R_+$.

Call $v$ the vertex in $\mathcal{G}_-$ corresponding to $S$, and let $v_1,\dots v_m$ be the vertices in $\mathcal{G}_-$ adjacent to it. Denote with $\mathcal{G}'_-$ the graph obtained by removing the vertex $v$ and the interior of all the edges incident to it from $\mathcal{G}_-$, and observe that the vertices of $G$, \ie, the sectors in $\mathcal{S}'$, are in bijection with the vertices of $\mathcal{G}'_-\sqcup \mathcal{G}_+$. Moreover:

\begin{enumerate}
    \item If two sectors in $\mathcal{S}'$ intersect the same boundary component $\partial_i M$, for $i\ne 0$, then their positive sides can be connected by a path through the positive side of the disc $D_i$, as showed for example in \Cref{fig: S_+ is connected}.
    \item If a sector in $\mathcal{S}'$ intersects one of the boundary components $\partial_i M$ meeting $S$, \ie, if it corresponds to a vertex $v_j$ of $\mathcal{G}_-$ adjacent to $v$, then its positive side can be connected to that of one of the two sectors in $\mathcal{S}'$ that intersect the interior of $D_i$, as in \Cref{fig: S is connected}\footnote{The pictures in \Cref{fig: S_+ is connected} and \Cref{fig: S is connected} show the case when there is no crossing adjacent to $D_i$, but the same properties hold when there is a crossing.}. We denote by $w_j$ the vertex associated to such sector in $\mathcal{G}_+$.
\end{enumerate}

Consider now  the graph $\mathcal{G}$ obtained by $\mathcal{G}'_-\sqcup \mathcal{G}_+$ by adding, for $j=1,\dots, m$, an edge connecting $v_j$ to $w_j$. The vertices of this graph are identified with the vertices of $G$, and the two properties above imply that if two vertices of are connected by a path in $\mathcal{G}$, then they are connected by a path in $G$. The graph $\mathcal{G}$ is connected by virtue of \Cref{lemma: grafi delle regioni} and \Cref{lemma: conn implica conn}.  Therefore, we conclude that $G$ is connected, and so is $R_+$. In the same way one proves that $R_-$ is connected, and hence that $X$ is a product sutured ball. 
\end{proof}
We now study a little bit further the branched surface $B$ and our goal is to prove that $B$ is laminar. We start with the following lemma.

\begin{lemma}\label{lemma: no closed surfaces and annuli}
Any surface carried by $B$ intersects $\partial M$. In particular $B$ does not carry any closed surface. Moreover $B$ does not carry annuli properly embedded in $M$.
\end{lemma}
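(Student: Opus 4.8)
Any surface carried by $B$ intersects $\partial M$; in particular $B$ carries no closed surface, and $B$ carries no annulus properly embedded in $M$.

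The plan is to exploit the very simple structure of $M\setminus {\rm int}(N_B)$ established in Proposition \ref{prop: from B to B'}: it is the union of a product sutured ball $X$ and the collar piece $(Y,\partial_v Y)=(\nu\partial_0 M, A_1\cup A_2)$. A surface $F$ carried by $B$ lies in $N_B$ and is transverse to the interval fibers, so it meets each fiber of $N_B$ in a finite set of points; equivalently $F$ is obtained by taking finitely many parallel copies of sectors of $B$ and gluing them along the branch locus respecting the fibered structure. First I would recall that $\partial_h N_B$ is contained (up to isotopy) in any lamination fully carried by $B$, and more importantly that $N_B$ deformation retracts onto $B$; thus it suffices to understand which sectors of $B$, and which unions thereof respecting cusp directions, can close up.

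The key step is a transversality/Euler-characteristic or, more simply, a ``no closed leaf'' argument coming from the two meridional cusps on $T$ and the coorientation. Every sector of $B$ is cooriented, and through the construction the exterior decomposition shows that if one collapses the fibers, a closed surface carried by $B$ would have to be built from sectors all of whose cusp directions are compatible; but near the torus $T$ the two cusps $c_1,c_2$ point in opposite directions into the discs, which is exactly the obstruction. Concretely I would argue: suppose $F$ is a closed surface carried by $B$. Then $F$ is disjoint from $\partial M$, hence from the annuli $A_1,A_2$, hence $F$ lies in the part of $N_B$ that retracts into $X\cup(\text{the } D^2\times[0,1]\text{-free part of }Y)$. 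Since $X$ is a ball, any closed surface carried there bounds in the ball; one then shows using incompressibility of $\partial_h N_B$ in $M\setminus{\rm int}(N_B)$ (which holds because $X$ is a product sutured ball, so $\partial_h N_B\cap X$ is a disc-with-holes that is incompressible) that $F$ must be carried inside a single product region and so is inessential, contradicting that $F$ is carried (a carried surface meets every chosen fiber and hence cannot be compressible to nothing). The cleanest route is: $F$ carried by $B$ and closed $\Rightarrow$ $F$ fully carried by a subsurface $B'\subseteq B$ with $B'\cap\partial M=\emptyset$; but every sector of $B$ not touching $\partial M$ is, by Lemma \ref{lemma: exterior of B}/Proposition \ref{prop: from B to B'}, such that $M\setminus{\rm int}(N_{B'})$ still contains the collar $Y$, whose presence forces $B'$ not to be a closed branched surface carrying a closed surface — indeed $B$ itself is not closed, it has boundary on $\partial_i M$ for $i\ge 1$ and on $T$ only through cusps, and collapsing fibers makes $B$ have free boundary, so it carries no closed surface.

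For the annulus statement I would run the same argument relative to $\partial M$: a properly embedded annulus $A'$ carried by $B$ meets $\partial M$ in its two boundary circles, each lying on some $\partial_i M$. Using that $\partial B\cap\partial_i M$ is a train track realising the slope $\tfrac1{k_i}$ for $i\ge 1$ and has only two cusps on $T$ for $i=0$, one checks that the boundary curves of $A'$ would be carried by these boundary train tracks; then $A'$ would give an essential annulus in $M=$ exterior of the fully augmented link $L$, or more directly $A'$ together with $\partial_h N_B$ being incompressible and $\partial$-incompressible in $M\setminus{\rm int}(N_B)$ forces $A'$ to be boundary-parallel into $\partial_h N_B$, hence not carried with nonzero weight on every fiber — contradiction. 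The main obstacle I anticipate is making the ``$F$ fully carried by a sub-branched-surface disjoint from $\partial M$ then contradicts the structure of the exterior'' step fully rigorous: one must argue carefully that a closed (resp. boundary-parallel) surface carried by $B$ can be isotoped to be fully carried by a proper sub-branched-surface, which is a standard but slightly delicate normal-surface-in-$N_B$ argument (cutting along compressions, using irreducibility of $M\setminus{\rm int}(N_B)=X\cup Y$ which is itself a union of a ball and a collar, hence irreducible), and then that this sub-branched-surface inherits a product complementary region in $X$ so carries nothing essential. Everything else reduces to bookkeeping about sectors and cusp directions already set up in Propositions \ref{prop: from sigma to B} and \ref{prop: from B to B'}.
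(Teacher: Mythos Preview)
Your proposal does not actually prove the lemma; the arguments you sketch are too vague to be completed as stated, and in places they invoke principles that are either false or not established.

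For the closed-surface part: the fact that $M\setminus{\rm int}(N_B)$ is a ball union a collar says nothing by itself about closed surfaces carried by $B$, since such surfaces live entirely inside $N_B$. Your line ``$B$ has free boundary, so it carries no closed surface'' is not a valid inference: a branched surface with boundary can certainly carry closed surfaces, provided those surfaces avoid the sectors that meet the boundary. What you need, and never supply, is an argument that there is no closed sub-branched-surface of $B$ disjoint from $\partial M$. The paper's proof does exactly this, directly and locally: one classifies the sectors of $B$ into three types --- those in the discs $D_i$, those coming from $S^2$ (the elements of $\mathcal{S}\setminus S$), and those in the torus $T$ --- and checks that every sector of the first two types actually meets $\partial M$ (for the $\mathcal{S}$-sectors this uses the connectedness hypothesis on $\mathcal{G}_g,\mathcal{G}_r$, since an $\mathcal{S}$-sector disjoint from $\partial M$ would be an isolated vertex). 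Hence any surface carried by $B$ and disjoint from $\partial M$ would run only over sectors in $T$; but the cusps along $c_1,c_2$ force such a surface into the discs $D_i$, a contradiction. This sector-by-sector analysis is the missing idea.

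For the annulus part your outline is even further from a proof: you gesture at $\partial$-incompressibility and boundary-parallel reasoning but give no mechanism for ruling out carried annuli, and ``$A'$ would give an essential annulus in the exterior of the fully augmented link'' is not a contradiction. The paper instead analyzes the \emph{weight system} that a putative carried annulus $A$ would induce on $B$: near the intersections of the $\mathcal{S}$-sectors with $T$ one sees (by inspecting the two possible local train tracks) that $A$ cannot pass over any $\mathcal{S}$-sector with positive weight. Hence $A$ is built only from sectors in $T$ and in the $D_i$'s, so in particular it must contain an entire two-holed disc $D_i$; an Euler-characteristic count then forces $A$ to be $D_i$ with an annulus and a disc glued along the two curves $T\cap D_i$, which is impossible since neither of those meridional curves bounds a disc in $M$. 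None of this weight-system and Euler-characteristic argument appears in your proposal.
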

\begin{proof}
Suppose that $F$ is a properly embedded surface in $M$ that is carried by $B$, and observe that transversality forces the following property: if $F$ passes through a sector, \ie, intersects some fiber over its interior, then it intersects \emph{all} the fibers over the sector. Similarly to the case of train tracks, we can use $F$ to define \emph{weight system} on $B$: we assign to each sector in $B$ a weight given by the number of intersections between $F$ and any interval fiber  over that sector. By definition, these weights satisfy the condition represented in \Cref{fig: branched surface with weights}.
Recall that the sectors of $B$ are of three types:
\begin{enumerate}[$a)$]
\item sectors contained in the discs $D_1, \dots, D_n$;
\item sectors in $\mathcal{S}\setminus S$;
\item sectors contained in the torus $T$.
\end{enumerate}
\begin{figure}[]
    \centering
    \includegraphics[width=0.23\textwidth]{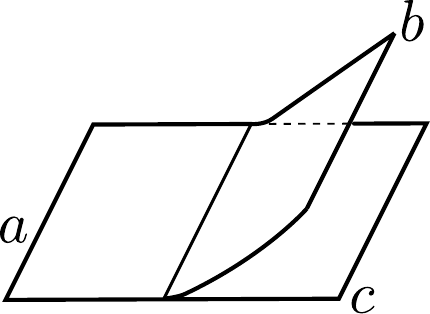}
    \caption{The weights in figure must satisfy $a=b+c$.}
    \label{fig: branched surface with weights}
\end{figure}
The sectors in the first two families intersect $\partial M$. So if $F$ is closed surface carried by $B$, then $F$ can intersect only the fibers over the sectors in $T$. This is not possible, since the two cusps along $T$ force $F$ to pass through some sectors of type $a)$, and $F$ must intersect $\partial M$, contradicting our hypothesis. 

Suppose now that $A$ is an annulus carried by $B$. 
Since all sectors of type $b)$ intersect the torus $T$, if $A$ passes through one of them, then by studying the weight system induced by $A$ near the intersection of $T$ and this sector, we have one of the two possible situations described in \Cref{fig: weight system near T}, with $a\ne 0$. In both cases, the compatibility conditions on the weights would force $a$ to be infinite, which is absurd. Therefore, $A$ does not pass through the sectors of type $b)$.
This implies that $A$ must (after a small isotopy) contain one of the two-punctured discs $D_i$. A computation with the Euler characteristic implies that $A$ is then obtained by gluing to $D_i$ an annulus and a disc along the two curves of intersection $T\cap D_i$. Neither of these curves, however, bounds a disc in $M$, since they are isotopic to meridians of $K_0$. This implies that there are no properly embedded annuli carried by $B$, and concludes the proof.
\end{proof}
\begin{figure}[]
    \centering
    \includegraphics[width=0.4\textwidth]{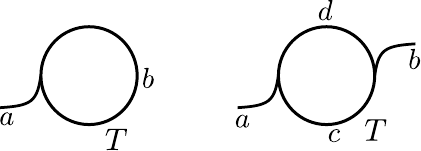}
    \caption{The two possible train tracks obtained by considering a section of $B$ in a neighbourhood of a sector in $T$ and the weight systems induced by $A$.}
    \label{fig: weight system near T}
\end{figure}

\begin{prop}\label{prop: B' is laminar}
The branched surface $B$ is laminar in $M$.
\end{prop}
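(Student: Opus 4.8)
The plan is to verify all five conditions in Definition \ref{def: laminar} for the branched surface $B$, using the structural description of $M\setminus{\rm int}(N_B)$ obtained in Proposition \ref{prop: from B to B'} together with Lemma \ref{lemma: no closed surfaces and annuli}. Recall from Proposition \ref{prop: from B to B'} that $M\setminus{\rm int}(N_B)=X\cup Y$ where $X$ is a product sutured ball and $(Y,\partial_v Y)=(\nu\partial_0 M, A_1\cup A_2)$ with $A_1,A_2$ two essential annuli of meridional slope in $\partial_0 M\times\{1\}$; also $\overline{B}$, and hence $B$, was constructed to have no sink discs or half sink discs, so condition $(5)$ is already in hand.

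For condition $(1)$, the horizontal boundary $\partial_h N_B$ consists of the copies of the sectors of $B$ sitting in $\partial X$ and in $\partial Y$; since $X$ is a product $S\times[0,1]$ and $Y=\partial_0 M\times[0,1]$, each such piece is a subsurface of one of the two product regions, and incompressibility and $\partial$-incompressibility of $\partial_h N_B$ in the complement follow because a compressing (or $\partial$-compressing) disc would live inside a product region, forcing its boundary curve to bound/cut off a disc there already. No component of $\partial_h N_B$ is a sphere because $B$ carries no closed surface (Lemma \ref{lemma: no closed surfaces and annuli}), and none is a properly embedded disc in $M$ because such a disc would be carried by $B$ and disjoint from $\partial M$, again contradicting Lemma \ref{lemma: no closed surfaces and annuli} (or, if it met $\partial M$, it would be a compressing disc for an incompressible torus boundary component). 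Condition $(3)$: $M$ is irreducible and $\partial M$ is incompressible — this is exactly the content of the (commented-out) Corollary in the text, or alternatively follows a posteriori from the last Remark after Theorem \ref{boundary train tracks}; and $\partial M\setminus{\rm int}(N_B)$ is incompressible in the complement because it lies in the product regions $X$ and $Y$. For condition $(4)$, a Reeb branched surface would carry a torus bounding a solid torus, in particular a closed surface carried by $B$, which is excluded by Lemma \ref{lemma: no closed surfaces and annuli}; one should also note $B$ has no trivial bubbles, since $\overline{B}$ had none and removing the sector $S$ does not create one.

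The main obstacle will be condition $(2)$: ruling out a monogon in $M\setminus{\rm int}(N_B)$, i.e. a disc $D$ with $\partial D=\alpha\cup\beta$, $\alpha$ an interval fiber of $\partial_v N_B$ and $\beta\subset\partial_h N_B$. Here I would use the product structure: such a monogon must sit inside $X$ (a product sutured ball) or inside $Y=(\nu\partial_0 M, A_1\cup A_2)$. In a product sutured manifold $S\times[0,1]$ with $\partial_v N_B=\partial S\times[0,1]$, a fiber $\alpha$ of $\partial_v N_B$ is a $\{*\}\times[0,1]$ arc, and an arc $\beta$ in $\partial_h N_B=S\times\{0,1\}$ cannot close it up to a disc whose interior misses $N_B$ unless $\beta$ also runs along a product fiber, which it cannot; so no monogon occurs in $X$. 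In $Y$, the annuli $A_1,A_2$ are essential and parallel copies bounding $\partial_0 M\times\{0\}$ off, so again a monogon would have to compress one of them, impossible since they are essential and $\partial_0 M\times\{1\}\setminus(A_1\cup A_2)$ is itself a union of (essential) annuli. The care needed is to check that every monogon can indeed be isotoped into one of these two pieces and does not straddle the (product) gluing region between $X$ and $Y$; this is where I expect to spend the most effort, examining how $\partial_h N_B$ and $\partial_v N_B$ meet along $\partial X\cap\partial Y$. Once $(1)$–$(5)$ are all verified, $B$ is laminar by Definition \ref{def: laminar}, completing the proof.
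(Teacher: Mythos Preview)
Your outline is broadly on track and matches the paper's structure, but you have misidentified where the work lies. The paper dispatches condition~(2) in a single line: \emph{there is no monogon because $B$ is cooriented}. This is the standard observation that the endpoints of an interval fiber $\alpha\subset\partial_v N_B$ lie on components of $\partial_h N_B$ with opposite coorientation (one in $R_+$, one in $R_-$), so the arc $\beta\subset\partial_h N_B$ closing up $\alpha$ would have to join two different components of $\partial_h N_B$, which is impossible. No detailed analysis of the product structure is needed, and your concern about monogons ``straddling'' $X$ and $Y$ is moot for a second reason: $X$ and $Y$ are \emph{disjoint} connected components of $M\setminus{\rm int}(N_B)$ (there is no ``gluing region'' between them), so any disc in the complement lies entirely in one of them.

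Two smaller points. First, condition~(3) asks that $M\setminus{\rm int}(N_B)$ be irreducible and that $\partial M\setminus{\rm int}(N_B)$ be incompressible \emph{in $M\setminus{\rm int}(N_B)$}; you instead argue that $M$ is irreducible and $\partial M$ is incompressible, which is a different (and unnecessary) statement. The paper simply notes that $X$ is a ball and $Y\cong T^2\times[0,1]$, which makes (3) immediate. Second, for the ``no trivial bubbles'' clause, the paper does not argue that removing $S$ creates none; rather it checks directly that the unique $\matD^2\times[0,1]$ region $X$ is nontrivial because the fiber-collapsing map fails to be injective on $\matD^2\times\{0,1\}$. Your handling of (1), (4), and (5) is essentially the same as the paper's.
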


\begin{proof}
We have to check that all the properties listed in \Cref{def: laminar} are satisfied. Recall from the previous proposition that $M\setminus {\rm int}(N_{B})= X\cup Y$ where $X$ is a product sutured ball and $Y$ are two meridional annular sutures in $\partial_0 M \times \{1\}$.
\begin{enumerate}[start=0, align=left, leftmargin=*]
\item \emph{$B$ has no trivial bubbles:} the only $\matD^2\times [0,1]$ region in the exterior of $B$ is $X$ and the fiber-collapsing map $\pi:{\rm int}(N_{B})\rightarrow B$ is not injective on $\matD^2 \times \{0,1\}$, since the union of the interiors of $R_+$ and $R_-$ intersects both the positive and negative sides of each sector in $\mathcal{S}$; 
\item \emph{$\partial_h N_{B}$ is incompressible and $\partial$-incompressible in $M\setminus {\rm int}(N_{B})$, and no component of $\partial_h N_{B}$ is a sphere or a properly embedded disc in $M$:} the horizontal boundary of $N_{B}$ is given by the two discs $\matD^2\times\{0\}$ and $\matD^2\times \{1\}$, in the boundary of $X$, and by two annuli in the boundary of $Y$, whose cores are essential simple closed curves in $\partial_0 M \times \{1\}$. This implies that $\partial_h N_B$  is incompressible in $M\setminus {\rm int}(N_{B})$. 

The horizontal boundary is $\partial$-incompressible since $X$ is a product sutured ball and the horizontal boundary of $N_{B}$ in $Y$ does not intersect $\partial M$.
Finally, no component of $\partial_h N_{B}$ is a sphere, and the two disc components in it are not properly embedded in $M$.
\item \emph{There is no monogon in $M\setminus {\rm int}(N_B)$:} this is a consequence of $B$ being cooriented;
\item \emph{$M\setminus {\rm int}(N_{B})$ is irreducible and $\partial M\setminus {\rm int}(N_{B})$ is incompressible in $M\setminus {\rm int}(N_B)$:} this is a direct consequence of $M\setminus {\rm int}(N_B)=X\cup Y$, where $X$ is a ball and $Y$ is the product of a torus and an interval;
\item \emph{$B$ contains no Reeb branched surfaces:} if this happens then $B$ carries a torus or a properly embedded annulus, which contradicts \Cref{lemma: no closed surfaces and annuli};
\item \emph{$B$ has no sink discs or half sink discs:} this follows from \Cref{prop: from B to B'}.
\end{enumerate}
This concludes the proof.
\end{proof}

\begin{lemma}\label{lemma: B is taut}
For every sector $F$ of $B$ and for every point $p\in F$ there exists an oriented simple closed curve containing $p$ that is positively transverse to $B$.
\end{lemma}

\begin{proof}
Let $p$ be a point in a sector $F$  of $B$, and consider the fiber $I_p$ of $N_B$ over $p$. The endpoints of $I_p$ lie (for a suitable chioice of $N_B$) in the horizontal boundary of $M\setminus {\rm int}(N_B)$. Recall that $M\setminus {\rm int}(N_B)$ is union of a product sutured ball $\matD^2 \times [0,1]$ and $\partial_0 M\times [0,1]$. If $S$ is not contained in the torus $T$, then one endpoint of $I_p$ lies in $\matD^2 \times \{0\}$ and the other in $\matD^2 \times \{1\}$, so we can connect them with a properly embedded simple arc in the exterior of $B$, and find a simple closed curve that intersect $B$ only in $F$. If $F$ is contained in the torus $T$, then one endpoint of $I_p$ is contained in $\matD^2 \times \{0,1\}$, and the other in $\partial_0 M\times \{1\}$. Orient $I_p=[0,1]_p$ according to the coorientation of $B$ and, without loss of generality suppose that $1\in I_p$ is contained in $\matD^2 \times \{0,1\}$. Recall that the coorientations on $T$ are assigned so that the two annuli obtained by cutting $T$ along the curves $c_1$ and $c_2$ -- the curves where the cusps were created -- are cooriented in opposite ways. This implies that there exists another sector $F'$ contained in $T$ with coorientation opposite to $F$, \ie, such that if we consider an oriented fiber $I_{p'}=[0,1]_{p'}$ over $p' \in F'$, then $0\in I_{p'}$ is contained in $\matD^2 \times \{0,1\}$. We can then find a properly embedded simple arc $\alpha \subset \matD^2\times [0,1]$ and a properly embedded simple arc $\beta\subset \partial_0 M\times [0,1]$ so that $I_p\cup \alpha\cup I_{p'}\cup \beta$ defines the desired curve. See \Cref{fig: curve sector torus} for a schematic picture.
\end{proof}
\begin{figure}[]
    \centering
    \includegraphics[width=0.35\textwidth]{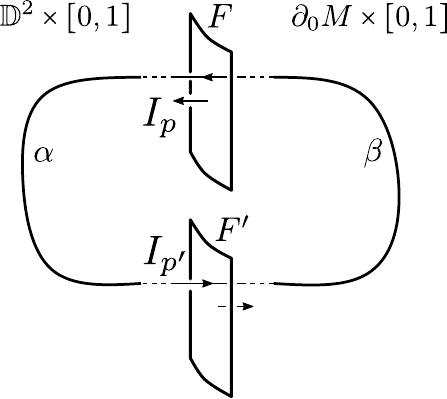}
    \caption{Schematic picture of transversal through a sector in $T$.}
    \label{fig: curve sector torus}
\end{figure}
The following result represents the core of the proof of the main theorem. We remark that the standing assumption that the graph $\mathcal{G}_g\cup \mathcal{G}_r$ has more than $4$ vertices plays no role in the proof below. The arguments presented here can be carried over unchanged to prove \Cref{thm: sec 3 general main thm borr} and \Cref{thm: sec 3 general main thm borr twist}.
\begin{teo}\label{thm: general main thm}
Let $s$ be any non-meridional slope on $\partial_0 M$ and let $(r_1, \dots, r_n)$ be a rational multislope on $\partial_1 M\sqcup \cdots \sqcup \partial_n M$. Suppose that the multislope $(r_1,\dots, r_n)$ is realised by the boundary train track $\partial B$, and that $r_i$ is not the longitude defined by the disc $D_i$, for $i=1,\dots, n$. Then, there exists a coorientable taut foliation in $M$ intersecting the boundary transversely in a linear foliation of multislope $(s,r_1,\dots, r_n)$. In particular, when $s$ is rational, the manifold $M(s, r_1, \dots, r_n)$ obtained by filling $M$ along the multislope $(s, r_1, \dots, r_n)$ supports a coorientable taut foliation.
\end{teo}
\begin{proof} 
As in the proof of \Cref{boundary train tracks} by Li (see \cite[Theorem 2.5]{L2}), we first split $B$ in a neighbourhood of $\partial_1 M\sqcup \cdots \sqcup \partial_n M$, obtaining a branched surface $B'$ that intersects each of these boundary components in simple closed curves of slopes $r_1, \dots, r_n$, respectively. We then consider the manifold $M(\cdot,r_1,\dots, r_n)$, obtained by filling the boundary component $\partial_i M$ along the slope $r_i$, for $i=1,\dots, n$. Let $B(r_1, \dots, r_n)$ denote the branched surface in $M(\cdot,r_1,\dots, r_n)$ obtained by capping $B'$ with meridional discs of the solid tori. 

Li proves that $B(r_1, \dots, r_n)$ is laminar, and therefore, by \cite[Theorem 1]{L}, fully carries an essential lamination $\mathcal{L}$ and the ambient manifold is irreducible. We can also suppose that the horizontal boundary of a regular neighbourhood of $B(r_1, \dots, r_n)$ is contained in $\mathcal{L}$. Our goal is now to apply \Cref{cusps implies persistently foliar}. To this end, we show that:
\begin{itemize}[leftmargin=*]
\item \emph{$B(r_1, \dots, r_n)$ is taut:} we must check that the horizontal boundary of the regular neighbourhood of $B(r_1, \dots, r_n)$ is Thurston norm minimising, and that every sector of $B(r_1, \dots, r_n)$ intersects an oriented simple closed curve positively transverse to the branched surface. 

We begin with the branched surface $B'$. Recall that $M\setminus {\rm int}(N_B) = X \cup Y$, where $X$ is a product sutured ball and
$(Y, \partial_v Y) = (\nu\partial_0M, \mathcal{A}_1 \cup \mathcal{A}_2)$, with $\mathcal{A}_1 \cup \mathcal{A}_2 \subset \partial_0M \times \{1\}$ two annuli whose cores are meridians of $K_0$. Since $B'$ is obtained by splitting $B$ in a neighbourhood of the boundary components $\partial_1 M, \dots, \partial_n M$, its exterior can be written as
$$ Y \cup X', \text{ with } X'=X\cup J,
$$
where $J$ is a $[0,1]$-bundle with $\partial_v J\cap \partial X \subset \partial_v X$ and such that $\partial J$ meets $\partial X$ so that the fibers agree.
Hence $X'$ is again a connected product sutured manifold and has some ``external'' annular sutures, \ie, those contained in $\partial_1 M \sqcup \cdots \sqcup \partial_n M$. 

Each sector of $B'$ intersects an oriented simple closed curve that is positively transverse to $B'$. In fact, we know that this is true for $B$, and we can suppose that such curves intersect $N_B$ in a union of interval fibers. By the definition of splitting, $B'$ is contained in $\rm{int} (N_B)$ in such a way that the interval fibers of $N_B$  are positively transverse to $B'$. Therefore, the set of positive transversals to $B$ obtained in \Cref{lemma: B is taut} also provides a set of positive transversals through all the sectors of $B'$.

The branched surface $B(r_1, \dots, r_n)$ is obtained by adding to $B'$ some meridional discs of the solid tori glued to $M$. It is hence straightforward that every sector of this branched surface intersects a positive transversal. Moreover, its exterior is obtained by gluing product sutured balls to $X'$ along the external annular sutures. Therefore, it is again union of $Y$ and a product sutured manifold, and so its horizontal boundary is Thurston norm minimising.

\item  \emph{up to splitting leaves, $\mathcal{A}_1, \mathcal{A}_2$ satisfy the noncompact extension property relative to ($B(r_1, \dots, r_n), \mathcal{L}$):}
The horizontal boundary of $Y$ consists of two annuli, one with boundary in $\mathcal{A}_1$ and the other with boundary in $\mathcal{A}_2$. Let $L_0$ and $L_1$ be the leaves containing $\partial_h Y$. By splitting \cite[Operation~2.1.2]{Gabaisus} if necessary, we can suppose that $L_0\ne L_1$.

Any leaf of $\mathcal{L}$ that passes through some sector in $D_i$ must also pass through a sector in $\mathcal{S}'$ intersecting $\partial_i M$. Otherwise, the intersection of this leaf with $\partial_i M$ would be a union of simple closed curves carried by the train track consisting of $\partial D_i$ only, and hence a union of longitudes of $K_i$, contradicting our assumptions. 
In particular the leaf $L_0$ must pass through a sector in $\mathcal{S}'$ that intersects $\partial_1 M$, and, as in the proof of \Cref{lemma: no closed surfaces and annuli}, it must spiral near the torus $T$; the same holds near $\partial_2 M$. This argument shows that each connected component of $L_0\setminus { \rm int}(\partial_h Y)$ is noncompact. Analogously, each connected component of $L_1\setminus { \rm int}(\partial_h Y)$ is noncompact.

Now consider the complementary regions\footnote{That is, the abstract closures of the connected components of the complement of $\mathcal{L}$ in a regular neighbourhood of $B(r_1, \dots, r_n)$.} of $\mathcal{L}$ in a regular neighbourhood of $B(r_1, \dots, r_n)$. These consist of products whose horizontal boundary lies in leaves of $\mathcal{L}$ and whose vertical boundary, if any, is contained in the vertical boundary of $X'\cup Y$. Let $P_j$, for $j=1,2$, denote the complementary region containing $\mathcal{A}_j$, possibly with $P_1=P_2$. Then $\partial_h P_j\subset (L_0\cup L_1)\setminus { \rm int}(\partial_h Y)$. 

If $P_j$ has no vertical boundary on $X'$, it is homeomorphic to one component of $(L_0\setminus { \rm int}(\partial_h Y))\times[0,1]$, which is noncompact, so we can find the required properly embedded copy of $[0,1]\times [0, \infty)$. If instead $P_1$ has vertical boundary on $X'$, recall that $X'$ is a product sutured manifold, say $X'=Q\times [0,1]$, with $Q \times \{0,1\} \subset L_0 \cup L_1$. Without loss of generality, assume that $Q\times \{0\}\subset L_0$ and $Q\times \{1\}\subset L_1$. We then modify $\mathcal{L}$ by isotoping $L_0$ near $Q \times [0,1]$ so that it contains $Q \times \{1\}$, and adding a leaf parallel to $L_0$. This operation (illustrated in \Cref{fig: noncompact}) produces a new lamination $\mathcal{L}'$ still fully carried by $B(r_1, \dots, r_n)$, for which the complementary region $P'_1$ containing $\mathcal{A}_1$ is disjoint from $X'$ and hence noncompact.
\begin{figure}[h]
    \centering
    \includegraphics[width=0.9\textwidth]{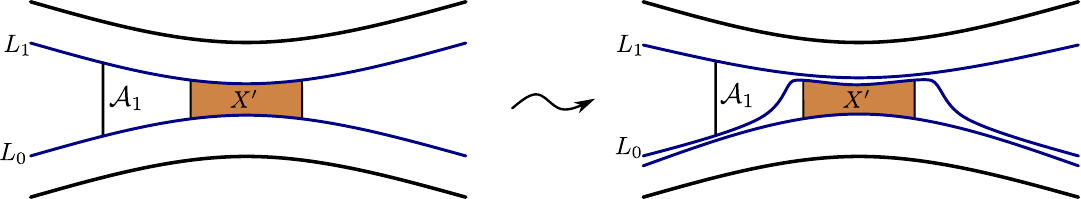}
    \caption{The picture shows how to modify the lamination in a neighbourhood of $X'$ so to achieve the noncompact extension property.}
    \label{fig: noncompact}
\end{figure}

The complementary region $P'_2$ containing $\mathcal{A}_2$ either coincides with $P_2$ (when $P_2$ does not intersect $X'$) or with $P'_1$ otherwise. In both cases, $P_2$ is noncompact, and thus both $\mathcal{A}_1$ and $\mathcal{A}_2$ satisfy the noncompact extension property relative to $(B(r_1, \dots, r_n), \mathcal{L}')$.
\end{itemize}
By \Cref{cusps implies persistently foliar}, it follows that for every non-meridional slope $s$, the lamination $\mathcal{L}$ extends to a taut foliation intersecting $\partial_0 M$ in curves of slope $s$. This completes the proof.
\end{proof}

We are finally ready to prove \Cref{thm: main theorem} when $\mathcal{G}_g\cup \mathcal{G}_r$ has more than $4$ vertices.
\begin{teo}\label{thm: main thm parte 1}
Let $K$ be a knot in $S^3$ with a diagram $D$. Suppose that:
\begin{itemize}
\item All weights of the graphs $\mathcal{G}_r$ and $\mathcal{G}_g$ are greater than one, and at least one weight is greater than two;
\item The graphs $\mathcal{G}_r$ and $\mathcal{G}_g$ are connected.
\end{itemize}
Suppose moreover the graph $\mathcal{G}_r\cup \mathcal{G}_g$ has more than $4$ vertices.
Then $K$ is persistently foliar.
\end{teo}
\begin{proof}
Consider the non-zero integers $k_i$, for $i=1, \dots, n$, such that $M(\cdot, \frac{1}{k_1}, \dots, \frac{1}{k_n})$ is the exterior of $K$. The boundary train track $\partial B$ realises the multislope $(\frac{1}{k_1}, \dots, \frac{1}{k_n})$ by virtue of \Cref{prop: from B to B'}. The result follows from \Cref{thm: general main thm}.
\end{proof}

\section{Relations with Tait graphs} \label{sec: Tait}
In this section we show that \Cref{thm: main theorem} can be rephrased in terms of Tait graphs.
These are a pair of graphs with $\pm 1$-labelled edges in $S^2$ that can be associated to a knot diagram $D$ through the following procedure. Take a checkerboard colouring of the connected components of the complement of the knot projection in the sphere. For convenience, we colour the regions green and red. The Tait graph $G_g(D)$ is defined by taking the green regions of the complement as vertices and assigning one edge for each crossing of $D$, according to the rule illustrated in \Cref{fig: Tait def}. Analogously, using the red regions as vertices yields the graph $G_r(D)$. 

\begin{figure}[h]
    \centering
    \includegraphics[width=0.6\textwidth]{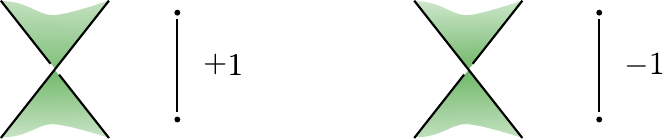}
    \caption{How to assign a weighted edge in the Tait graph to a crossing in the knot diagram.}
    \label{fig: Tait def}
\end{figure}

\Cref{thm: main theorem} will be restated in terms of graphs derived from the Tait graphs of $D$. To this end, we impose an additional assumption on the knot diagrams under consideration. Let $v,w$ two vertices in $G_g(D)$ (resp. $G_r(D)$) and suppose they are connected by multiple edges $e_1,\dots, e_h$, with $h\geq 2$. We say that $e_1,\dots, e_h$ are \emph{parallel} if there exists a disc $D\subset S^2$ that contains all the edges $e_1,\dots, e_h$ and does not intersect the interior of any other edge in $G_g(D)$ (resp. $G_r(D)$).

\begin{assum}\label{assum 2}
The diagram $D$ is such that if $e_1,\dots, e_h$ are the edges connecting a pair of vertices in $G_g$ or in $G_r$, then they are parallel.
\end{assum}

\Cref{assum 2} is not restrictive, and every knot has a diagram satisfying it. In fact, a diagram $D$ does not satisfy it if and only if it contains two distinct twist regions arranged as in the left side of \Cref{fig: twist}. If this happens, we can modify $D$ by twisting, as in the right side of \Cref{fig: twist}, to obtain a diagram with fewer twist regions. By iterating this procedure finitely many times, we obtain a diagram satisfying \Cref{assum 2}. From now on, we will suppose that the knot diagram $D$ satisfies \Cref{assum 1} and \Cref{assum 2}.

\begin{figure}[h]
    \centering
    \includegraphics[width=0.45\textwidth]{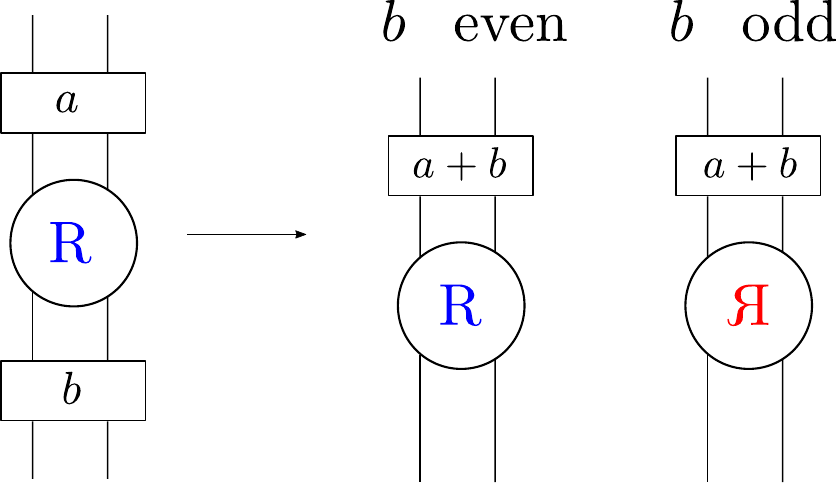}
    \caption{How to modify the diagram if it does not satisfy \Cref{assum 2}. 
    We coloured the front and the back of the letter blue and red, respectively. 
    When $b$ is odd, the back is shown, since the diagram has changed by a $\pi$–rotation.}
    \label{fig: twist}
\end{figure}

Fix one of the two Tait graphs of $D$ and denote it by $G$. 
Consider the maximal subgraph of $G$ whose vertices are exactly the bivalent vertices of $G$, and let 
$C_1,\dots, C_k$ denote its connected components. 
For each $i$, we define the positive integer
\[
c_i=\text{ number of vertices in } C_i + 1.
\]
Let $\mathcal{C}=\{c_1, \dots, c_k\}$. 
Notice that a connected component $C_i$ corresponds to a twist region in $D$ with $c_i$ crossings, except in the special case where $C_i$ is a cycle and $D$ has only one twist region; in that case, it corresponds to a twist region with $c_i-1$ crossings.

Starting from $G$, we define a graph $\mathscr{G}$ with weighted edges according to the following rule:
\begin{enumerate}
\item Consider the maximal subgraph $G'$ of $G$ that does not contain any bivalent vertices; in other words, $G'$ is obtained by removing all bivalent vertices of $G$ and the interior of all edges incident to them.
\item If two vertices $v$ and $w$ of $G'$ are connected by multiple edges $\{e_1, \dots, e_h\}$, collapse them to a single edge $e$ whose weight $w(e)$ is the absolute value of the sum of the labels of $e_1, \dots, e_h$.
\end{enumerate}

\Cref{fig: new graph rule} gives a pictorial description of this two–step construction.  

\begin{figure}[]
    \centering
    \includegraphics[width=0.75\textwidth]{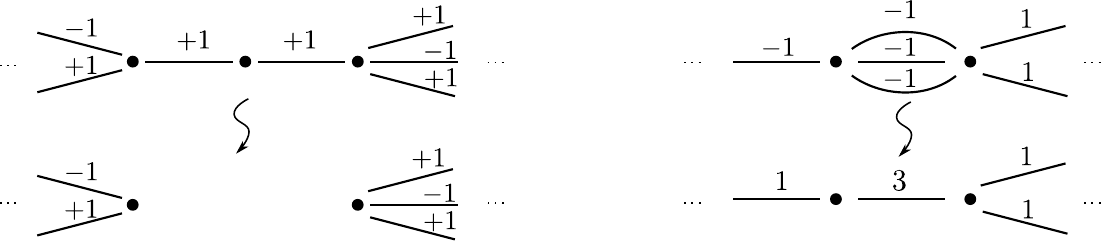}
    \caption{A pictorial description of the two-step costruction of the graph $\mathscr{G}$.}
    \label{fig: new graph rule}
\end{figure}

We denote by $\mathcal{W}$ the set of weights of the graph $\mathscr{G}$, and we are restate \Cref{thm: main theorem} as follows. 

\begin{teo}\label{thm: Tait graphs}
Let $K$ be a knot in $S^3$ with a diagram $D$, and suppose that $D$ has more than one twist region. Assume that:
\begin{itemize}
\item All elements of $\mathcal{C}\cup \mathcal{W}$ are greater than one, and at least one element is greater than two;
\item The graph $\mathscr{G}$ is contractible.
\end{itemize}
Then $K$ is persistently foliar.
\end{teo}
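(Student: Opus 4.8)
The plan is to deduce Theorem \ref{thm: Tait graphs} from Theorem \ref{thm: main theorem} by a purely combinatorial translation: I will match the Tait-graph data $\mathcal{C}, \mathcal{W}, \mathscr{G}$ of $D$ with the graphs $\mathcal{G}_g, \mathcal{G}_r$ of Theorem \ref{thm: main theorem}. No new geometry is involved.

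Recall that $\Gamma$ is $D$ with each twist region collapsed to a single weighted arc, that $\Gamma'$ is obtained from $\Gamma$ by collapsing those arcs to $4$-valent vertices, and that by definition $\mathcal{G}_g$ and $\mathcal{G}_r$ are the two Tait graphs of $\Gamma'$, the weight of an edge being the number of crossings of the corresponding twist region. The first step is to record how a single twist region sits inside a Tait graph of $D$. A twist region with $m$ crossings which is alternating (Assumption \ref{assum 1}) and does not make $D$ one of the diagrams $D_k$ contributes, in exactly one of $G_g(D)$ and $G_r(D)$, a maximal string of $m-1$ bivalent vertices joining two vertices of valence at least $3$, and in the other Tait graph a bundle of $m$ parallel edges between two such vertices, all $m$ of them carrying the same $\pm 1$ label; a one-crossing twist region is a single $\pm 1$-labelled edge in each Tait graph. (When $D=D_k$ the bivalent string closes into a cycle, which is the origin of the cycle clause in the definition of $c_i$; that diagram is excluded by hypothesis.) Consequently, for the fixed Tait graph $G$, the bivalent components $C_i$ are precisely these bigon strings, so $c_i$ is the number of crossings of the corresponding twist region; and steps (1) and (2) of the construction --- suppressing the bigon strings and then amalgamating parallel edges --- are exactly the operations that convert $G$ into the corresponding Tait graph of the collapsed diagram $\Gamma'$. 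Hence $\mathscr{G}$ coincides, as an abstract graph, with $\mathcal{G}_g$ or with $\mathcal{G}_r$, and each amalgamated edge carries weight equal to the absolute value of a sum of $m$ labels all equal to $\pm 1$, i.e. $m$, again the number of crossings of the twist region (Assumption \ref{assum 1} forces the labels to agree, and Assumption \ref{assum 2} prevents edges coming from distinct twist regions from being identified).

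Two consequences follow. First, $\mathcal{C}\cup\mathcal{W}$ is exactly the set of crossing numbers of the twist regions of $D$, which by the Remark after Theorem \ref{thm: main theorem} is exactly the set of weights occurring on $\mathcal{G}_g$, equivalently on $\mathcal{G}_r$; so the first bullet of Theorem \ref{thm: Tait graphs} is verbatim the first bullet of Theorem \ref{thm: main theorem}. Second, a graph is contractible if and only if it is a tree, and by Lemma \ref{lemma: grafi connessi sse contrattili} together with the Remark after Theorem \ref{thm: main theorem} the graph $\mathcal{G}_g$ is a tree $\iff$ $\mathcal{G}_r$ is a tree $\iff$ both are connected; hence contractibility of $\mathscr{G}$ is equivalent to connectedness of $\mathcal{G}_g$ and $\mathcal{G}_r$, which is the second bullet of Theorem \ref{thm: main theorem}. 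Therefore the hypotheses of Theorem \ref{thm: Tait graphs} hold for $D$ precisely when those of Theorem \ref{thm: main theorem} do, and since $D$ is by assumption not isotopic in $S^2$ to some $D_k$ or to a mirror of it, Theorem \ref{thm: main theorem} applies and $K$ is persistently foliar.

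The one step that really requires care is the twist-region/Tait-graph correspondence of the second paragraph: one must verify that a bivalent component of $G$ can arise only from the bigon string of a twist region (this uses the standard reductions on $D$ together with Assumptions \ref{assum 1} and \ref{assum 2}), handle one-crossing twist regions and the degenerate cycle case $D_k$ correctly, and check that steps (1) and (2) reproduce a Tait graph of $\Gamma'$ rather than a graph with extra or missing components. Everything after that is formal.
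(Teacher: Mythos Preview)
Your proposal is correct and follows essentially the same route as the paper: identify $\mathscr{G}$ with one of $\mathcal{G}_g,\mathcal{G}_r$ (via the observation that removing the bivalent strings and amalgamating parallel edges is precisely the passage from a Tait graph of $D$ to a Tait graph of the collapsed diagram $\Gamma'$), check that $\mathcal{C}\cup\mathcal{W}$ recovers the set of twist-region crossing numbers, invoke Lemma~\ref{lemma: grafi connessi sse contrattili} to turn contractibility into connectedness, and then apply Theorem~\ref{thm: main theorem}. The paper's own proof is the same argument in more compressed form, relying on a figure for the twist-region/Tait-graph correspondence that you spelled out in words.
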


\begin{proof}
For concreteness, suppose that the graph $\mathscr{G}$ is constructed starting from the Tait graph $G_r(D)$, and denote it by $\mathscr{G}_r$. Recall from \Cref{sec: intro} that the graph $\Gamma$ is obtained by replacing each twist region of $D$ with a blue edge. As in \Cref{fig: Tait equiv color}, the checkerboard colouring of the complementary regions of the knot projection, used to define $G_g(D)$ and $G_r(D)$, induces a colouring of the complementary regions of $\Gamma$.

\begin{figure}[]
    \centering
    \includegraphics[width=0.3\textwidth]{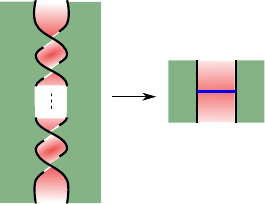}
    \caption{From a checkerboard colouring of the knot projection to a colouring of the complementary regions of $\Gamma$.}
    \label{fig: Tait equiv color}
\end{figure}

Each twist region of $D$ falls into one of two cases: either the bigons in it correspond to vertices in $G_r(D)$, or they do not.
Twist regions of the first type are in bijection with the connected components $C_1,\dots, C_k$ of the maximal subgraph of $G_r(D)$ containing exactly the bivalent vertices. 
Those of the second type, since $D$ satisfies \Cref{assum 2}, are in bijection with pairs of vertices in $G_r(D)$ connected by multiple edges $e_1,\dots, e_h$ with $h \geq 2$.
The graph $\mathcal{G}_r$ can be recovered from $G_r(D)$ by first removing the components $C_1,\dots, C_k$ together with all edges incident to them, and then collapsing all multiple edges between two vertices to a single edge. Since these are precisely the two steps in the definition of $\mathscr{G}_r$, we can identify $\mathscr{G}_r$ with $\mathcal{G}_r$, as illustrated in \Cref{fig: Tait equiv}.

The result follows from the observation that the elements of $\mathcal{C} \cup \mathcal{W}$ coincide with the weights of the graphs $\mathcal{G}_r$ and $\mathcal{G}_g$, together with the fact that $\mathcal{G}_r$ is contractible if and only if both $\mathcal{G}_g$ and $\mathcal{G}_r$ are connected, by \Cref{lemma: grafi connessi sse contrattili}.
\end{proof}

\begin{rem}
We stress that in order to apply \Cref{thm: Tait graphs} it is sufficient that the hypotheses on $\mathcal{C}, \mathcal{W}$ and $\mathcal{G}$ are satisfied for the choice of one of the two Tait graphs.
\end{rem}

\begin{figure}[h]
    \centering
    \includegraphics[width=0.75\textwidth]{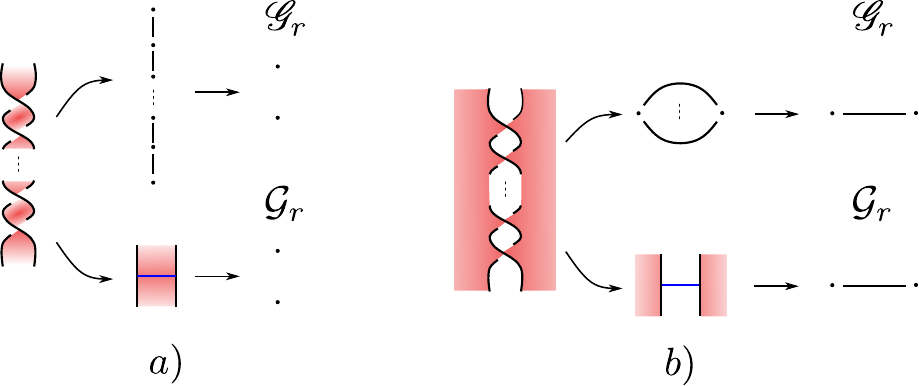}
    \caption{How the graphs $\mathscr{G}_r$ and $\mathcal{G}_r$ are obtained by $G_r(D)$.}
    \label{fig: Tait equiv}
\end{figure}

\section{Applications: arborescent knots and some braid closures}\label{sec: applications}
We conclude the paper with some applications of \Cref{thm: main theorem} to arborescent knots and braid closures.

\subsection{Arborescent knots}
\begin{figure}[]
    \centering
    \includegraphics[width=0.95\textwidth]{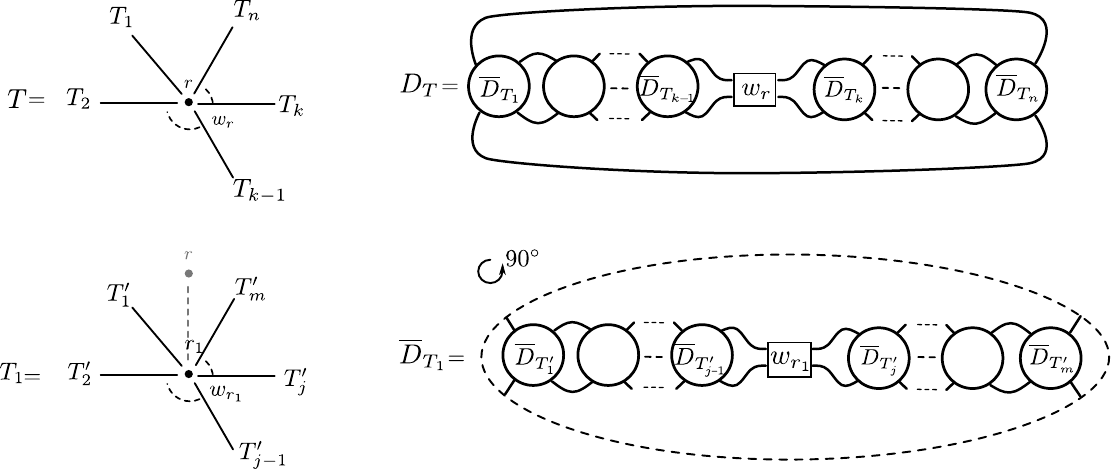}
    \caption{Top: how to associate a diagram $D_T$ to a weighted planar tree endowed with a choice of a root $r$ and an auxiliary angular sector at $r$, indicated in the picture as the one containing the letter $r$.\newline
    Bottom: the tree $T_1$ has a canonical root $r_1$, namely the vertex connected to the vertex $r$ in $T$. It also has a preferred angular sector, namely the one containing the edge connecting $r_1$ to $r$. We associate to $T_1$ the tangle diagram $\overline{D}_{T_1}$ obtained by a $90^{\circ}$ counterclockwise rotation of the diagram shown in the figure. The box with twists is placed in the leftmost (resp. rightmost) part of the diagram when the dotted edge is the right (resp. left) boundary of the sector with the weight $w_{r_1}$.}
    \label{fig: Arborescent recur}
\end{figure}
The family of arborescent tangles is defined as the minimal family of tangles that contains all rational tangles and is closed under both horizontal and vertical tangle composition. Arborescent links are those obtained as closures of arborescent tangles. These links can be described combinatorially using planar weighted trees\footnote{Here, by a planar weighted tree we mean a tree with $\mathbb{Z}$–weighted vertices embedded in $\mathbb{R}^2$, together with the choice of an angular sector at each vertex.}, where each vertex represents a twisted band and the weight gives the number of half-twists (positive for right-handed twists and negative for left-handed ones). Whenever two vertices are joined by an edge, the corresponding bands are plumbed together. In this way, one obtains a possibly non-orientable surface, and the associated link is its boundary. In our drawings, the chosen angular sector at a vertex $v$ will be indicated simply by writing the weight $w_v$ inside it.

Let $T$ be a weighted planar tree endowed with the auxiliary data of a root $r$ --- that is, a distinguished vertex --- and a chosen angular sector at $r$. A diagram in $S^2$ of the arborescent link associated to $T$ can be obtained through the following recursive procedure:
\begin{enumerate}
\item Denote by $T_1,\dots, T_n$ the connected components of the graph obtained from $T$ by removing $r$ and all edges incident to it. We order these components according to the cyclic order induced by the planar embedding of $T$, starting counterclockwise from the auxiliary angular sector. Each $T_i$ is itself a planar weighted tree with a root $r_i$, the unique vertex of $T_i$ that was adjacent to $r$ in $T$, and a preferred angular sector, the one containing the edge joining $r_i$ to $r$. We associate to each $T_i$ the tangle diagram $\overline{D}_{T_i}$ as described in the bottom part of \Cref{fig: Arborescent recur}.
\item We then associate to $T$ the diagram $D_T$ obtained by plugging the diagrams $\overline{D}_{T_i}$ into the diagram shown in the upper part of \Cref{fig: Arborescent recur}.
\end{enumerate}
A concrete example, together with the surface obtained by plumbing twisted bands, is shown in \Cref{fig: Arborescent}. For further details, we refer the reader to \cite{BonSieb}.

Arborescent links generalise two-bridge links, pretzel links and Montesinos links. These are obtained, respectively, by considering trees $a) ,b)$ and $c)$ in \Cref{fig: tree two bridge}.

It follows by \cite{Wu} that non-torus arborescent knots have no reducible surgeries. Moreover, it is a consequence of the results of \cite{LM, BM, LMZ} that the only arborescent knots with non-trivial $L$-space surgeries are, up to mirroring, the pretzel knots $P(-2, 3, q)$ and the torus knots $T(2, q)$, with $q\geq 1$ odd.  Therefore, the $L$-space conjecture predicts that all non-trivial surgeries on any of the remaining arborescent knots contain coorientable taut foliations.
We use \Cref{thm: main theorem} to prove that many arborescent knots are indeed persistently foliar.

\begin{figure}[]
    \centering
    \includegraphics[width=0.75\textwidth]{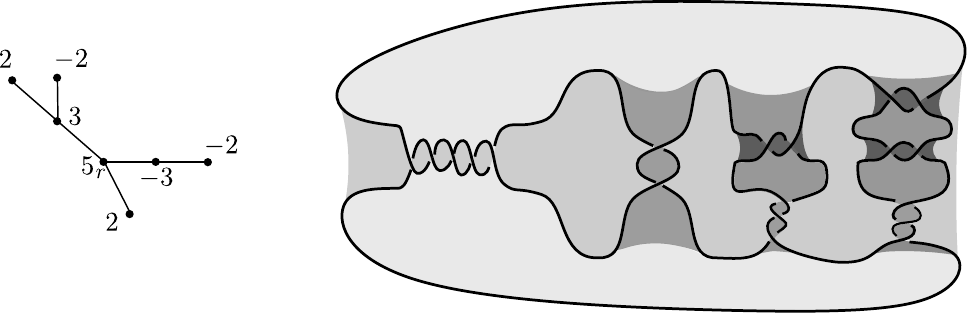}
    \caption{A weighted planar tree, together with a chosen root $r$ and angular sector at $r$, and the associated diagram. We have also shaded the surface obtained by plumbing twisted bands according to the tree.}
    \label{fig: Arborescent}
\end{figure}
\begin{figure}[]
    \centering
    \includegraphics[width=0.75\textwidth]{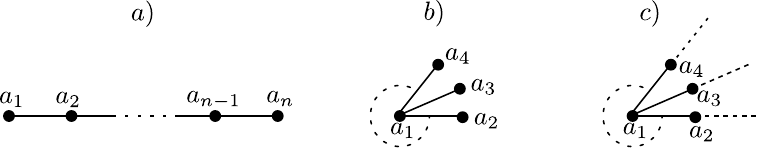}
    \caption{Weighted planar trees associated to two-bridge, pretzel and Montesinos links.}
    \label{fig: tree two bridge}
\end{figure}

\begin{lemma}\label{lemma: arborescent graph}
Let $T$ be a weighted planar tree whose weights all have absolute value greater than one and let $D_T$ be the diagram of the link associated to $T$ (with respect to any root and choice of auxiliary angular sector at it). Then the graphs $\mathcal{G}_g$ and $\mathcal{G}_r$ are contractible.
\end{lemma}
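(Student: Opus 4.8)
The plan is to show that the graph $\mathcal{G}_r$ (and hence, by Lemma \ref{lemma: grafi connessi sse contrattili}, also $\mathcal{G}_g$) is a tree, equivalently connected, equivalently contractible. The key tool is the correspondence, established in Section \ref{sec: intro}, between the vertices of $\mathcal{G}_g \cup \mathcal{G}_r$ and the complementary regions of the graph $\Gamma$ obtained from $D$ by collapsing each twist region to a weighted red arc, together with the checkerboard structure. So first I would recall how the diagram $D$ associated to a weighted planar tree $T$ looks: each vertex $v$ of $T$ with weight $w(v)$ contributes a twist region with $|w(v)|$ crossings, and the edges of $T$ encode how these twist regions are composed (horizontally or vertically in alternation as one moves along the tree). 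Since every weight has $|w(v)| > 1$, every twist region has at least two crossings, so Assumptions \ref{assum 1} and \ref{assum 2} can be arranged and the graph $\Gamma$ has exactly one red arc per vertex of $T$.

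**The main step: identifying $\Gamma'$ with a graph built from $T$.**
The heart of the argument is to understand the $4$-valent graph $\Gamma'$ obtained from $\Gamma$ by collapsing all red arcs. I would argue that $\Gamma'$ has exactly one $4$-valent vertex per vertex of $T$ (this is the collapsed twist region), and that the incidences between these vertices are dictated by the edges of $T$: adjacent vertices of $T$ give rise to $4$-valent vertices of $\Gamma'$ that share a complementary region in a controlled way. The cleanest way to organize this is inductively on the tree, building up the arborescent diagram one vertex at a time and tracking how the complementary regions of $\Gamma$ merge. Each time a new leaf of $T$ is attached via a tangle composition, the corresponding new twist region is inserted into one of the existing complementary regions, splitting it into two regions of opposite colour while leaving the rest of the checkerboard structure intact. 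Concretely, attaching a new vertex adds exactly one red arc and exactly one new complementary region. Hence, if $T$ has $|V(T)|$ vertices and $|E(T)| = |V(T)| - 1$ edges, then $\Gamma$ has $|V(T)| + 1$ complementary regions in total (the $+1$ being the ``initial'' region before any vertex is added), distributed between the two colours.

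**Counting to conclude.**
Now I would run the Euler-characteristic bookkeeping exactly as in the proof of the lemma preceding the ``at most $4$ elements'' discussion. The graph $\Gamma'$ gives a cell decomposition of $S^2$, is $4$-valent, so has $2|V(T)|$ edges, and therefore $\#\{\text{complementary regions of }\Gamma'\} = 2 + |V(T)|$; since the complementary regions of $\Gamma'$ biject with those of $\Gamma$, and the vertices of $\mathcal{G}_g \cup \mathcal{G}_r$ biject with those same regions, we get $|V(\mathcal{G}_g)| + |V(\mathcal{G}_r)| = |V(T)| + 2$. On the other hand, each red arc of $\Gamma$ (one per vertex of $T$) contributes exactly one edge to $\mathcal{G}_g$ and one edge to $\mathcal{G}_r$ — provided its two endpoints lie on distinct regions of the appropriate colour, which is where Assumption \ref{assum 2} (no cycle with exactly two vertices) and the hypothesis $|w(v)|>1$ are used to rule out a red arc whose two endpoints abut the same region. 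Thus $|E(\mathcal{G}_g)| + |E(\mathcal{G}_r)| = 2|V(T)|$, while $|V(\mathcal{G}_g)| + |V(\mathcal{G}_r)| = |V(T)| + 2$. A connected graph on $m$ vertices has at least $m-1$ edges, with equality iff it is a tree; since $\mathcal{G}_g \sqcup \mathcal{G}_r$ has $|V(T)|+2$ vertices and $2|V(T)| = (|V(T)|+2) - 2 + |V(T)|$ edges — hmm, this raw count alone is not decisive, so here I would instead invoke the geometric argument of Lemma \ref{lemma: grafi connessi sse contrattili}: replacing each red arc by a band decomposes $S^2$ into surfaces $\Sigma_g, \Sigma_r$ homotopy equivalent to $\mathcal{G}_g, \mathcal{G}_r$, and it suffices to check that one of them, say $\Sigma_g$, is a disc. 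Building up along $T$, one sees directly that at each stage $\Sigma_g$ remains a disc (attaching a new vertex/band either leaves $\Sigma_g$ unchanged or enlarges it by a disc glued along an arc), so $\Sigma_g$ is contractible, hence $\mathcal{G}_g$ and $\mathcal{G}_r$ are both contractible.

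**Expected main obstacle.**
I expect the main difficulty to be the careful verification that attaching a vertex of $T$ does exactly what I claimed to the checkerboard structure of $\Gamma$ — i.e. that horizontal versus vertical tangle composition, and the alternation of these as one traverses the tree, never causes a new twist region to be inserted in a way that merges two pre-existing same-coloured regions (which would create a cycle in $\mathcal{G}_g$ or $\mathcal{G}_r$) or that creates an isolated vertex. This is essentially a bookkeeping statement about how the two checkerboard colours interact with the two types of tangle composition, and the cleanest framing is probably to note that in an alternating-in-each-twist-region arborescent diagram the two colour classes of regions correspond to the two Tait graphs, whose tree structure mirrors that of $T$ after contracting twist regions, as spelled out in Section \ref{sec: Tait}. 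The hypothesis $|w(v)|>1$ for every vertex is exactly what guarantees each twist region is genuinely a row of bigons (not a single crossing that could be absorbed), so that the vertex set of $\mathscr{G}$ — equivalently of $G'$, equivalently of $\mathcal{G}_r$ — is precisely $V(T)$ and the edge set of $\mathscr{G}$ is precisely $E(T)$; since $T$ is a tree, so is $\mathscr{G} \cong \mathcal{G}_r$, and contractibility follows.
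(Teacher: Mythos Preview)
Your proposal mixes three different arguments, and two of them contain genuine errors.

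\textbf{The edge count is wrong.} You claim that each red arc contributes one edge to $\mathcal{G}_g$ \emph{and} one edge to $\mathcal{G}_r$. In fact each red arc contributes an edge to exactly one of the two graphs---the one whose colour labels the two regions the arc separates. This is visible in Lemma \ref{lemma: grafi connessi sse contrattili}: thickening the red arc to a band places that band in exactly one of the surfaces $\Sigma_g,\Sigma_r$, and the band \emph{is} the corresponding edge. With the correct count $|E(\mathcal{G}_g)|+|E(\mathcal{G}_r)|=|V(T)|$ and $|V(\mathcal{G}_g)|+|V(\mathcal{G}_r)|=|V(T)|+2$, the Euler bookkeeping only recovers Lemma \ref{lemma: grafi connessi sse contrattili} (connected $\Rightarrow$ tree); it does not establish connectedness, as you yourself noted.

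\textbf{The Tait-graph shortcut fails.} Your final claim that $V(\mathscr{G})=V(T)$ and $E(\mathscr{G})=E(T)$, hence $\mathcal{G}_r\cong T$, is false. Take $T$ a path on four vertices (a two-bridge knot with four twist regions). A direct computation---or the paper's inductive formula---gives $|V(\mathcal{G}_g)|=|V(\mathcal{G}_r)|=3$, so neither graph has four vertices. The relationship between $T$ and the graphs $\mathcal{G}_g,\mathcal{G}_r$ is more delicate than a bijection of vertex sets; the colour roles swap as one descends the tree.

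\textbf{Your middle approach is essentially the paper's.} The inductive idea you sketch---build up $\Sigma_g$ along $T$ and check it stays a disc---is right, and is what the paper does, though organised differently. The paper removes the root $r$ to get subtrees $T_1,\dots,T_n$ with diagrams $D_1,\dots,D_n$, and the key structural observation (precisely the ``bookkeeping'' you anticipated as the main obstacle) is that $\mathcal{G}_g(D)$ is obtained from the graphs $\mathcal{G}^i_{\boldsymbol{r}}(D_i)$---note the colour swap, reflecting the horizontal/vertical alternation of tangle compositions---by identifying all their infinite-region vertices to a single vertex $v$ and then adding one new vertex joined to $v$. Since each $\mathcal{G}^i_r$ is a tree by induction, the result is a tree. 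You identified the right difficulty but never actually carried out this step; the two arguments you did carry out do not work.
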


\begin{proof}
We prove the lemma by induction on the number of vertices of $T$. If $T$ has only one vertex, then the diagram $D$ is the standard diagram of the $T(2,q)$-torus link, viewed as the closure of a $2$-braid. In this case, one of the graphs consists of a single vertex, while the other has two vertices connected by an edge.

For the inductive step, suppose that $D_T$ is associated to the weighted planar tree $T$, endowed with a root $r$ and a chosen angular sector at $r$, and consider the graphs $\mathcal{G}_g$ and $\mathcal{G}_r$ associated to $D_T$. Notice that, since the weights of $T$ have absolute value greater than one, there are no twist regions in $D_T$ made up by only one crossing, and so the graphs $\mathcal{G}_g$ and $\mathcal{G}_r$ are uniquely defined. We fix the convention that the graph $\mathcal{G}_r$ is the one containing the infinite region in the plane of the diagram. 
Removing the root $r$ and all edges incident to it yields a graph with $n$ connected components $T_1, \dots, T_n$, each of which is again a weighted planar tree. As in \Cref{fig: Arborescent recur}, each $T_i$ has a canonical root $r_i$ and angular sector at $r_i$. Let $D_{T_i}$ be the corresponding diagram, and denote by $\mathcal{G}^i_r$ and $\mathcal{G}^i_g$ the associated graphs. By our convention, $\mathcal{G}^i_r$ is always chosen to contain the infinite region of the diagram. By the inductive hypothesis, all these graphs are contractible. 

The graph $\mathcal{G}_g$ is contractible since it is obtained in the following way:

\begin{itemize}
\item For each $i$, a vertex $v^i$ in $\mathcal{G}^i_r$ is replaced by two vertices, $v^i_1$ and $v^i_2$, and each edge that was incident to it is reassigned either to $v^i_1$ or $v^i_2$. In this way, we obtain two contractible graphs $\mathcal{G}^i_1$ and $\mathcal{G}^i_2$, containing  $v^i_1$ and $v^i_2$ respectively.  
\item The disjoint union $\mathcal{G}^1_1\sqcup\cdots \sqcup \mathcal{G}^n_1$ is then quotiented by identifying all the vertices $v^i_1$ to a single vertex $v_1$, producing a contractible graph $\mathcal{G}_1$; An analogous operation on the $\mathcal{G}^i_2$ yields another contractible graph $\mathcal{G}_2$ with vertex $v_2$.
\item Finally, the graphs $\mathcal{G}_1$ and $\mathcal{G}_2$ are connected by adding an edge joining $v_1$ to $v_2$.
\end{itemize}
An example of this procedure is shown in \Cref{fig: arborescent graph}.  By \Cref{lemma: grafi connessi sse contrattili}, the graph $\mathcal{G}_r$ is contractible if and only if $\mathcal{G}_g$ is, and this concludes the proof.
\end{proof}
\begin{figure}[h]
    \centering
    \includegraphics[width=0.85\textwidth]{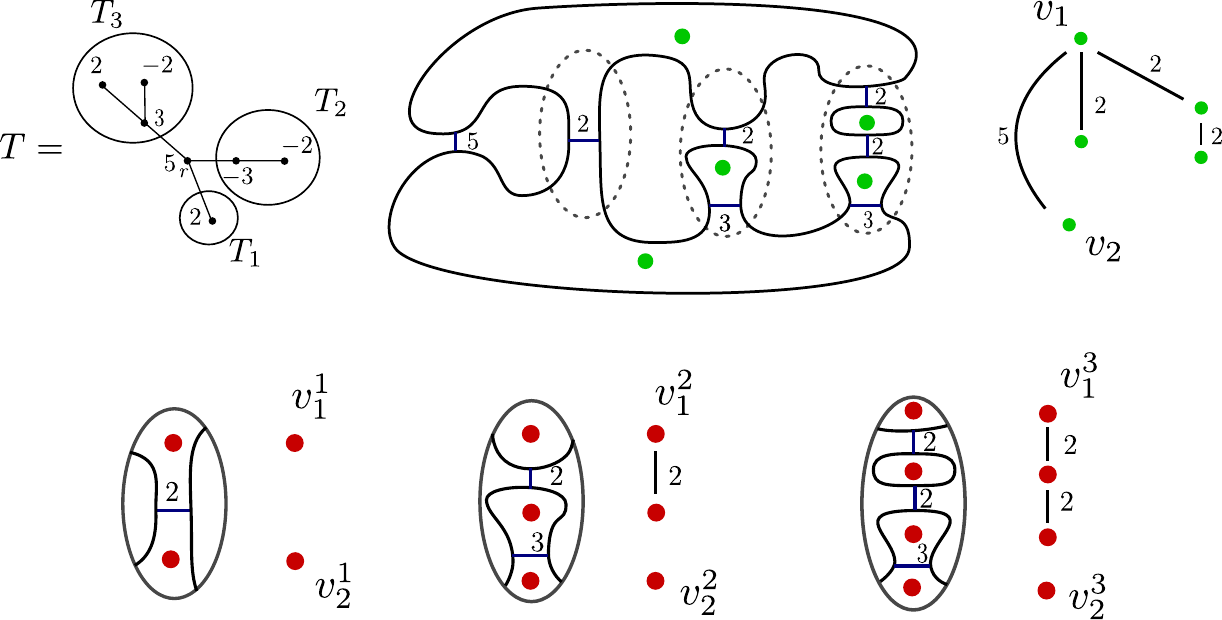}
    \caption{Top: The graphs $\Gamma$ and $\mathcal{G}_g$ associated with the diagram $D_T$ of the tree $T$.
Bottom: The graphs $\mathcal{G}^i_1$ and $\mathcal{G}^i_2$ associated with the tree $T_i$, for $i = 1, 2, 3$. The graph $\mathcal{G}_g$ is obtained by identifying all the vertices $v^i_1$ as a single vertex $v_1$, and all the vertices $v^i_2$ as a single vertex $v_2$, and then connecting these two vertices with an edge.}
    \label{fig: arborescent graph}
\end{figure}

\begin{teo}\label{thm: arborescent}
Let $K$ be an arborescent knot associated to a weighted planar tree $T$ with more than one vertex. If all vertex weights have absolute value greater than one, and at least one weight has absolute value greater than two, then $K$ is persistently foliar.
\end{teo}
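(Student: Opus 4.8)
The plan is to deduce Theorem~\ref{thm: arborescent} directly from Theorem~\ref{thm: main theorem} (equivalently, from the Tait-graph version Theorem~\ref{thm: Tait graphs}) by checking that the diagram $D$ of $K$ coming from the weighted planar tree $T$ satisfies the two hypotheses of that theorem, after possibly normalising $D$ to meet Assumptions~\ref{assum 1} and~\ref{assum 2}. First I would recall the standard fact that a vertex of $T$ with weight $a$ contributes a twist region with $|a|$ crossings to $D$, and that distinct vertices give distinct twist regions, with the adjacency of twist regions dictated by the edges of $T$. By the second Remark after Theorem~\ref{thm: main theorem}, the first bullet of that theorem (all weights of $\mathcal{G}_r,\mathcal{G}_g$ greater than one, at least one greater than two) is literally equivalent to: every twist region of $D$ has more than one crossing, and at least one has more than two. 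Since every weight of $T$ has $|a|\geq 2$ and at least one has $|a|\geq 3$, this is immediate. (Here one uses the convention, as in the construction of $D_k$, that a weight $a$ with $|a|\ge 2$ produces a genuine twist region with $\ge 2$ crossings; care is only needed that no cancellation occurs, but distinct tree vertices yield distinct, non-interacting twist regions.)

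Next I would dispatch the connectivity hypothesis. This is exactly the content of Lemma~\ref{lemma: arborescent graph}: for a weighted planar tree with all weights of absolute value greater than one, the graphs $\mathcal{G}_g$ and $\mathcal{G}_r$ associated to the resulting diagram are contractible, hence connected. Combined with Lemma~\ref{lemma: grafi connessi sse contrattili}, this gives the second bullet of Theorem~\ref{thm: main theorem}. The one remaining point is the exclusion of the exceptional diagrams $D_k$ and their mirrors: these arise precisely when $T$ has a single vertex, which is ruled out by the hypothesis that $T$ has more than one vertex. (One should double-check that no multi-vertex tree with all weights $\ge 2$ in absolute value can be isotopic in $S^2$ to some $D_k$; this follows because $D_k$ has a single twist region, whereas a tree with $\ge 2$ vertices, all of weight $\ge 2$ in absolute value, produces $\ge 2$ twist regions and a non-cyclic $\Gamma'$ with more than one vertex, so the graphs $\Gamma,\Gamma'$ are combinatorially different.)

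Having verified both bullets and the non-exceptionality clause, Theorem~\ref{thm: main theorem} applies and yields that $K$ is persistently foliar, completing the proof. The only subtlety to handle with care — and the step I expect to be the main (minor) obstacle — is the bookkeeping between the tree $T$, the diagram $D$, and Assumptions~\ref{assum 1} and~\ref{assum 2}: one must ensure that in passing from $T$ to a diagram satisfying those assumptions (alternating within each twist region, no bigon cycles between two vertices), no twist region loses crossings in a way that violates the $\ge 2$ (resp.\ $\ge 3$) bounds, and that the graphs stay connected. Since the moves of Assumption~\ref{assum 2} (Figure~\ref{fig: twist}) merely absorb a two-vertex cycle into a single twist region and only increase crossing numbers there, and Assumption~\ref{assum 1} only removes crossings from non-alternating twist regions (which do not occur in the standard arborescent diagram), neither bound nor connectivity is affected, so the reduction goes through.
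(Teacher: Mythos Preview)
Your proposal is correct and follows essentially the same route as the paper's own proof: take the diagram $D$ associated to the tree $T$, observe that the weights of $T$ coincide with the weights of $\mathcal{G}_g$ and $\mathcal{G}_r$, invoke Lemma~\ref{lemma: arborescent graph} for contractibility (hence connectedness) of both graphs, note that $T$ having more than one vertex rules out the $D_k$ diagrams, and apply Theorem~\ref{thm: main theorem}. The paper's proof is in fact terser and does not spell out the bookkeeping around Assumptions~\ref{assum 1} and~\ref{assum 2}, so your extra caution there is harmless but not strictly needed.
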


\begin{proof}
Consider the diagram $D_T$ associated to $T$, with respect to any root and the choice of any auxiliary angular sector at it. Since $T$ has more than one vertex, the diagram $D_T$ has more than one twist region. Since the weights of the tree $T$ coincide with the weights of the graphs $\mathcal{G}_g$ and $\mathcal{G}_r$ and both graphs are contractible by \Cref{lemma: arborescent graph}, we can conclude by applying \Cref{thm: main theorem}.
\end{proof}

\subsection{Some braid closures}
We conclude the section with some applications to braid closures. Let $B_n$ denote the braid group on $n$ strands with $\sigma_1,\dots, \sigma_{n-1}$ the Artin generators of $B_n$. We denote the closure of a braid $\beta$ by $\hat{\beta}$.

\begin{defn}
A word $w=\sigma_{i_1}^{a_1}\sigma_{i_2}^{a_2}\cdots \sigma_{i_k}^{a_k}$ in the Artin generators is \emph{reduced} when $\sigma_{i_{j+1}}\ne \sigma_{i_j}$ for $j=1, \dots, k-1$ and $\sigma_{i_k}\ne \sigma_{i_1}.$
\end{defn}

\begin{defn}
Let $w=\sigma_{i_1}^{a_1}\sigma_{i_2}^{a_2}\cdots \sigma_{i_k}^{a_k}$ be a reduced word in $B_n$. For a fixed index $h\in \{1,\dots, n-1\}$, we say that any $\sigma_{i_j}^{a_j}$ with ${i_j}=h$ is an \emph{occurrence} of $\sigma_{h}$ in $w$. Denote by $I_h$ the set of indices $j_1< \dots < j_m$ such that $\sigma_{i_j}=\sigma_h$. We order the elements of $I_h$ and extend this to a cyclic order, so that $j_1$ is considered consecutive to $j_m$. We say that two occurrences $\sigma_{i_j}^{a_j}$ and $\sigma_{i_{j'}}^{a_{j'}}$ of $\sigma_h$ in $w$ are \emph{consecutive} if $j$ and $j'$ are consecutive as elements in $I_h$.
\end{defn}

The following theorem states that if a knot $K$ can be written as the closure of a braid of a particular type, then $K$ is persistently foliar. In particular, $K$ is not an $L$-space knot and  has no reducible surgeries.

\begin{teo}\label{thm: braids}Let $n\geq 3$ be an odd positive integer, and let $K=\hat{\beta}$, where 
\[
\beta = \sigma_{i_1}^{a_1} \sigma_{i_2}^{a_2} \cdots \sigma_{i_k}^{a_k} \in B_n
\] 
is a reduced word, each exponent satisfies $|a_j| \ge 2$, and for at least one index $j_0$, we have $|a_{j_0}| > 2$. Suppose further that the following conditions hold: 
\begin{itemize}
\item For every odd index $h\geq 1$, between any two consecutive occurrences of $\sigma_h$, there is at least one occurrence of $\sigma_{h+1}$. 
\item For every even index $h\geq 2$, between any two consecutive occurrences of $\sigma_h$, there is at least one occurrence of $\sigma_{h-1}$. 
\end{itemize}
Then $K$ is persistently foliar.
\end{teo}

\begin{proof}
Consider the diagram of $\beta$ in $D^2\subset S^2$ given by the word $\sigma_{i_1}^{a_1}\sigma_{i_2}^{a_2}\cdots \sigma_{i_k}^{a_k}$, and take the diagram $D$ of $K$ obtained by identifying the endpoints of this diagram in $S^2$ with $n$ embedded arcs, as usual.
The absolute values $|a_i|$ of the exponents coincide with the weights of the graphs $\mathcal{G}_r$ and $\mathcal{G}_g$ associated to $D$, and by hypothesis these are all greater than one, with at least one greater two. Therefore, to prove the desired result, it suffices to show that both $\mathcal{G}_r$ and $\mathcal{G}_g$ are connected. To this end, consider the graph $\Gamma\subset S^2$ constructed as described in \Cref{sec: intro}. Removing the blue arcs from $\Gamma$ yields $n$ parallel circles that decompose $S^2$ into two discs $D_0, D_n$ and $n-1$ annuli $A_1, \dots, A_{n-1}$, where $D_0$ is adjacent to $A_1$, and $D_n$ is
adjacent to $A_{n-1}$.
Each annulus $A_i$ is further divided by the blue arcs into $m_i$ discs, where $m_i$ is the number of occurrences of $\sigma_i$ in the word defining $\beta$. Without loss of generality, assume that $\mathcal{G}_g$ is the graph containing $D_n$ as a vertex. Then the vertices of $\mathcal{G}_g$ correspond to $D_n$ and all the complementary regions of $\Gamma$ contained in $A_1,\dots, A_h,\dots, A_{n-2}$, where the indices $h$ range over the odd integers between $1$ and $n-1$. See \Cref{fig: braid} for an example.
We have that:
\begin{itemize}
\item \emph{Every vertex in $A_{n-2}$ is connected to the vertex $D_n$:} this follows from the hypothesis that between any two occurrences of $\sigma_{n-2}$ there is one occurrence of $\sigma_{n-1}$;
\item \emph{Every vertex in $A_{h}$ is connected to some vertex in $A_{h+2}$, for odd $h$ with $1\leq h< n-2$:} this follows from the assumption that between any two occurrences of $\sigma_{h}$ there is at least one occurrence of $\sigma_{h+1}$, for all odd $h\geq 1$. 
\end{itemize}
This implies, by induction, that every vertex in $\mathcal{G}_g$ is connected by a path to the vertex $D_n$ and hence the graph is connected. In the same way, by using that between any two consecutive occurrences of $\sigma_h$ there is at least one occurrence of $\sigma_{h-1}$, for all even $h\geq 2$, one proves that $\mathcal{G}_r$ is connected. By \Cref{thm: main theorem}, we obtain the desired result.
\end{proof}

\begin{figure}[h]
    \centering
    \includegraphics[width=0.75\textwidth]{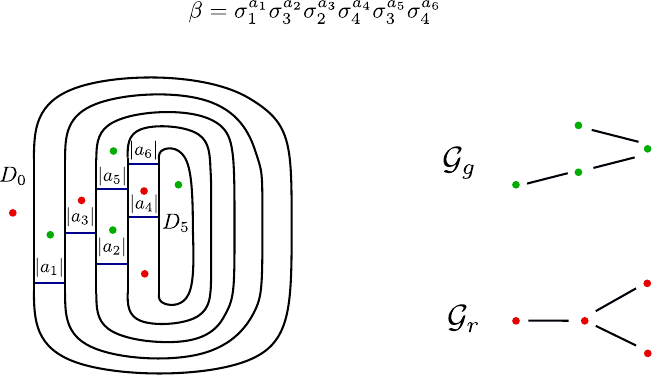}
    \caption{An illustration showing the graphs $\Gamma$, $\mathcal{G}_r$ and $\mathcal{G}_g$ associated to the diagram of a knot given as the closure of a braid.}
    \label{fig: braid}
\end{figure}
In the case of $3$-braid closures, the statement of \Cref{thm: braids} specialises to the following.

\begin{cor}\label{cor: 3-braids}
    
Let $K$ be the closure of a braid 
\[
\beta = \sigma_1^{a_1}\sigma_2^{a_2}\sigma_1^{a_3}\cdots \sigma_2^{a_k} \in B_3,
\] 
with $|a_i|\ge 2$ for all $i$, and $|a_{i_0}|>2$ for some index $i_0$. Then $K$ is persistently foliar.
\end{cor}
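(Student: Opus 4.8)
The plan is to derive Corollary~\ref{cor: 3-braids} as the special case $n=3$ of Theorem~\ref{thm: braids}, so the only real work is to check that the hypotheses of that theorem hold. First I would observe that the word $\beta=\sigma_1^{a_1}\sigma_2^{a_2}\sigma_1^{a_3}\cdots\sigma_2^{a_k}$ is reduced in the sense of the definition preceding Theorem~\ref{thm: braids}: its letters strictly alternate between $\sigma_1$ and $\sigma_2$, so $\sigma_{i_{j+1}}\neq\sigma_{i_j}$ for every $j$, and since the stated word starts with $\sigma_1$ and ends with $\sigma_2$ (so $k$ is even) one also has $\sigma_{i_k}=\sigma_2\neq\sigma_1=\sigma_{i_1}$. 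If one wanted to allow a word ending in $\sigma_1$, a single cyclic permutation together with merging the two resulting adjacent powers of $\sigma_1$ returns $\beta$ to the stated alternating form without changing $\hat\beta$ or the hypotheses on the exponents, so there is no loss of generality.

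Next I would verify the numerical hypotheses: here $n=3$ is an odd positive integer, and by assumption $|a_j|\geq 2$ for every $j$ while $|a_{j_0}|>2$ for some $j_0$, which are exactly the conditions on the exponents required in Theorem~\ref{thm: braids}. Then comes the verification of the two ``consecutive occurrences'' conditions. Since $n=3$, the relevant generators are $\sigma_1$ and $\sigma_2$, so the only odd index $h$ with $1\leq h\leq n-1$ is $h=1$ and the only even such index is $h=2$; thus I only need that between any two cyclically consecutive occurrences of $\sigma_1$ there is an occurrence of $\sigma_2$, and the symmetric statement with $\sigma_1$ and $\sigma_2$ interchanged. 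Both are immediate from the strict alternation of $\beta$: the occurrences of $\sigma_1$ are $\sigma_1^{a_1},\sigma_1^{a_3},\dots,\sigma_1^{a_{k-1}}$ and those of $\sigma_2$ are $\sigma_2^{a_2},\sigma_2^{a_4},\dots,\sigma_2^{a_k}$, and in the cyclic word they interleave perfectly. In particular the wrap-around pairs are fine: between $\sigma_1^{a_{k-1}}$ and $\sigma_1^{a_1}$ lies $\sigma_2^{a_k}$, and between $\sigma_2^{a_k}$ and $\sigma_2^{a_2}$ lies $\sigma_1^{a_1}$.

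With all hypotheses checked, Theorem~\ref{thm: braids} applies and gives that $K=\hat\beta$ is persistently foliar, which is the claim. I do not expect any genuine obstacle here beyond bookkeeping; the only points that require a moment's care are the cyclic (rather than linear) nature of the ``consecutive occurrences'' condition and the parity of $k$, both of which are dispatched by the remarks above.
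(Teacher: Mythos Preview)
Your proposal is correct and matches the paper's approach: the corollary is stated in the paper simply ``as a consequence'' of Theorem~\ref{thm: braids} with no separate proof, and your verification that the $n=3$ hypotheses (reducedness, exponent bounds, and the two interleaving conditions) are satisfied by the alternating word is exactly the intended argument. The extra care you take with the cyclic endpoint condition and the parity of $k$ is appropriate bookkeeping that the paper leaves implicit.
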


\bibliographystyle{alpha}
\bibliography{biblio}
\end{document}